\g@addto@macro\bfseries{\boldmath} % This makes math in section titles bold.
\numberwithin{equation}{section}
\newtheorem{theorem}[equation]{Theorem}
\newtheorem{lemma}[equation]{Lemma}
\newtheorem{corollary}[equation]{Corollary}
\theoremstyle{definition}
\newtheorem{algorithm}[equation]{Algorithm}
\newtheorem{definition}[equation]{Definition}
\newtheorem{remark}[equation]{Remark}
\newcommand{\Z}{\mathbb{Z}}
\newcommand{\Q}{\mathbb{Q}}
\newcommand{\F}{\mathbb{F}}
\let\P\relax
\newcommand{\P}{\mathbb{P}}
\newcommand{\calB}{\mathcal{B}}
\newcommand{\calD}{\mathcal{D}}
\newcommand{\calE}{\mathcal{E}}
\newcommand{\calO}{\mathcal{O}}
\newcommand{\calW}{\mathcal{W}}
\newcommand{\defi}[1]{\textsf{#1}} 	% for defined terms
\newcommand{\ndivides}{\mathrel{\nmid}}
\renewcommand{\leq}{\leqslant}
\renewcommand{\le}{\leqslant}
\renewcommand{\geq}{\geqslant}
\renewcommand{\ge}{\geqslant}
\DeclareMathOperator{\M}{\textsf{M}}
\DeclareMathOperator{\adj}{adj}
\DeclareMathOperator{\divisor}{div}
\newcommand{\id}{\mathrm{id}}
\newcommand{\restr}[2]{{#1\rvert}_{#2}}
\DeclareMathOperator{\tr}{\mathrm{tr}}
\DeclareMathOperator{\disc}{\mathrm{disc}}
\DeclareMathOperator{\Res}{\mathrm{Res}}
\DeclareMathOperator{\rad}{rad}
\DeclareMathOperator{\Proj}{Proj}
\newcommand{\frakm}{\mathfrak{m}}
\newcommand{\Cartier}{\mathcal{C}}
\definecolor{darkred}{HTML}{CC1F1F}
\definecolor{green}{rgb}{.4,.7,.4}
\definecolor{blue}{rgb}{.2,.6,.75}
\definecolor{pastelb}{HTML}{3333FF}
\definecolor{pastelyellow}{rgb}{0.992157, 0.552941, 0.235294}
\definecolor{pastelorange}{rgb}{0.941176, 0.231373, 0.12549}
\definecolor{pastelred}{rgb}{0.741176, 0., 0.14902}
\definecolor{darkbrown}{rgb}{0.25098, 0., 0.0745098}
\newcommand{\mr}[1]{MathSciNet:\,\href{https://mathscinet.ams.org/mathscinet-getitem?mr=#1}{MR#1}}
\newcommand{\arxiv}[1]{arXiv:\,\href{https://arxiv.org/abs/#1}{#1}}
\newcommand{\iacr}[1]{IACR:\,\href{https://eprint.iacr.org/#1}{#1}}
\newcommand{\halinria}[1]{HAL-Inria:\,\href{https://hal.inria.fr/inria-#1/}{#1}}
\newcommand{\hal}[1]{HAL:\,\href{https://hal.archives-ouvertes.fr/hal-#1/}{#1}}
\newenvironment{alignedtikzcd}
{%
\begin{tikzcd}[row sep=0pt,column sep=1pc]%
}%
{%
\end{tikzcd}%
}
\definecolor{mylinkcolor}{rgb}{0.5,0.0,0.0}
\definecolor{myurlcolor}{rgb}{0.0,0.0,0.75}
\title{Counting points on smooth plane quartics}
\author{Edgar Costa}
\address{Department of Mathematics
  77 Massachusetts Ave.
Cambridge, MA 02139, USA}
\email{edgarc@mit.edu}
\urladdr{\url{https://edgarcosta.org/}}
\thanks{The first and third authors were supported by Simons Foundation grant 550033}
\author{David Harvey}
\address{School of Mathematics and Statistics, University of New South Wales, Sydney NSW
2052, Australia}
\email{d.harvey@unsw.edu.au}
\urladdr{\url{http://web.maths.unsw.edu.au/~davidharvey/}}
\thanks{The second author was supported by the Australian Research Council (grant FT160100219).}
\author{Andrew V. Sutherland}
\address{Department of Mathematics
  77 Massachusetts Ave.
Cambridge, MA 02139, USA}
\email{drew@math.mit.edu}
\urladdr{\url{https://math.mit.edu/~drew/}}
\begin{document}

\begin{abstract}
We present efficient algorithms for counting points on a smooth plane quartic curve $X$ modulo a prime $p$.  We address both the case where $X$ is defined over~$\F_p$ and the case where $X$ is defined over $\Q$ and $p$ is a prime of good reduction.
We consider two approaches for computing $\#X(\F_p)$, one which runs in $O(p\log p\log\log p)$ time using $O(\log p)$ space and one which runs in $O(p^{1/2}\log^2\!p)$ time using $O(p^{1/2}\log p)$ space.  Both approaches yield algorithms that are faster in practice than existing methods.  We also present average polynomial-time algorithms for $X/\Q$ that compute $\#X(\F_p)$ for good primes $p\le N$ in $O(N\log^3\! N)$ time using $O(N)$ space. These are the first practical implementations of average polynomial-time algorithms for curves that are not cyclic covers of $\P^1$, which in combination with previous results addresses all curves of genus $g\le 3$.  Our algorithms also compute Cartier--Manin/Hasse--Witt matrices that may be of independent interest.
\end{abstract}
\maketitle

\section{Introduction}

Let $X/\Q$ be a smooth projective curve of genus $g$.  The $L$-function $L(X,s)=\sum_{n\ge 1} a_n n^{-s}$ is a Dirichlet series that is defined by an Euler product $\prod_p L_p(p^{-s})^{-1}$, where $L_p(T)$ is an integer polynomial of degree at most $2g$.  For primes $p$ of good reduction for $X$ the polynomial $L_p(T)$ is the numerator of the \defi{zeta function}
\begin{equation}\label{eq:zeta}
Z_p(T) \coloneqq \exp\left(\sum_{r\ge 1} \#X(\F_{p^r})\frac{T^r}{r}\right) =\frac{L_p(T)}{(1-T)(1-pT)}
\end{equation}
of the reduction of $X$ modulo $p$. The $L$-function $L(X,s)$ and its coefficients $a_n$ are the subject of many outstanding conjectures, including the connection to automorphic forms predicted by the Langlands program, generalizations of the Sato--Tate conjecture, the Lang--Trotter conjecture, and the conjecture of Birch and Swinnerton-Dyer, as well as conjectures about the zeros and special values of $L(X,s)$.  To numerically investigate these conjectures one needs to compute the Dirichlet coefficients $a_n$ for~$n$ up to some bound $N$ that one would like to make as large as possible, and at a minimum, larger than the square root of the conductor of $L(X,s)$ by a significant constant factor.

Since $L(X,s)$ is defined by an Euler product, its coefficients $a_n$ for $n\le N$ are determined by the coefficients $a_{p^e}$ for prime powers $p^e\le N$, almost all of which are Frobenius traces
\[
a_p=p+1-\#X(\F_p)
\]
at primes $p$ of good reduction for $X$.  From a computational perspective, the problem of computing the integers $a_n$ for $n\le N$ is overwhelmingly dominated by the cost of computing Frobenius traces $a_p$ for good primes $p\le N$, equivalently, counting points on $X$ modulo primes $p\le N$ of good reduction, which is the problem we consider here.

There are two existing algorithms that can compute $a_p$ for good primes $p\le N$ in time~$\widetilde O(N)$, which is optimal up to logarithmic factors, since it is quasilinear in the size of the output. The first is Pila's generalization of Schoof's algorithm \cite{Sch85,Pil90,AH01}, which can compute each $a_p$ in time $(\log p)^{O(1)}$, leading to a total time of $N(\log N)^{O(1)}$.  The second is Harvey's average polynomial-time algorithm \cite{Har15}, which can compute $a_p$ for good $p\le N$ in time $O(N\log^3\!N)$.  Neither of these algorithms is meant to be practical for $g>1$, but the second has the distinct advantage that the implicit constant (which increases with $g$) is not in the exponent of the complexity bound.  For $g=1$ both algorithms are practical, but the $\widetilde O(N^{5/4})$ generic group algorithm described in \cite{KS08} is faster for all practical values of $N$.

The case $g=2$ is efficiently addressed by the practical implementation of Harvey's algorithm for hyperelliptic curves given in \cite{HS14} and improved in \cite{HS16}.
Prior work has addressed the case $g=3$ in various special cases, including when $X$ is hyperelliptic, either as a degree-2 cover of $\P^1$ \cite{HS16} or as a degree-2 cover of a pointless conic \cite{HMS16}, and when $X$ is superelliptic, including Picard curves and cyclic 4-covers of $\P^1$ \cite{Sut20}. But the generic case of a smooth plane quartic is not efficiently addressed by any prior work we are aware of.

In this article we consider three practical average polynomial-time algorithms for computing the Frobenius traces $a_p$ of a smooth plane quartic $X/\Q$ at good primes $p\le N$.  As with the average polynomial-time algorithms mentioned above, they all involve the computation of partial products of a sequence of $r\times r$ integer matrices $M_0,\ldots,M_{N-1}$ reduced modulo coprime integers $m_0,\ldots,m_{N-1}$ that include the primes $p\le N$.  This can be accomplished in $O(r^2N\log^3 N)$ time using $O(r^2N\log N)$ space via an accumulating remainder tree, and one can improve the constant factors in the time complexity and reduce the space complexity to $O(r^2N)$ using the accumulating remainder forest described in \cite{HS14,HS16}; see Theorem~\ref{thm:forest} for a precise statement.
As with other average polynomial-time algorithms, one can alternatively use these matrices to count points modulo a particular prime $p$ in two ways: one runs in $O(r^2p\log p\log\log p)$ time using $O(r^2\log p)$ space and the other runs in $O(r^2p^{1/2}\log^2 p)$ time using $O(r^2p^{1/2}\log p)$ space, assuming $r=O(\log p)$.

Our restriction to genus 3 curves effectively fixes $r$, so $r^2$ becomes a constant factor that is hidden in our complexity bounds.  But $r$ takes different values in each of the three algorithms we present, and this has a significant impact on their relative running times.  Constant factors related to the size of the matrix coefficients size also play a role, but they are less significant; see \S \ref{sec:timings} for a detailed discussion and a performance comparison of the three algorithms.

Our algorithms compute the trace of Frobenius $a_p$ by computing the trace of the \defi{Cartier--Manin} matrix $A_p\in \F_p^{3\times 3}$ of the smooth plane quartic $X_p\colon f(x_0,x_1,x_2)=0$ over $\F_p$ given by reducing~$X$ modulo $p$.  The precise definition of $A_p$ is recalled in \S\ref{sec:cartier}, but its entries consist of nine particular coefficients of $f^{p-1}$ and its trace is congruent to $a_p$ modulo $p$, which uniquely determines $a_p$ for $p>144$.  The Cartier--Manin matrix provides additional information about~$X_p$, including the $p$-rank of its Jacobian and the reduction of $L_p(T)$ modulo~$p$, which constrains $L_p(T)$ to~$O(p^{1/2})$ possibilities.  These possibilities can be distinguished in $\widetilde O(p^{1/4})$
time using a probabilistic generic group algorithm working in the Jacobian of~$X$; see \cite{Sut07,KS08,Sut09} for details of the algorithm and see \cite{FOR08} for efficient implementation of the group operation. This does not yield an average polynomial time for computing $L_p(T)$ for good $p\le N$, it would have complexity $\widetilde O(N^{5/4})$, but for the practical range of $N$ this approach is faster in practice than using the average polynomial-time algorithm in \cite{Har15}, which can compute $L_p(T)$ for good $p\le N$ in $O(N \log^3\!N)$ time.

The key differences among the three algorithms we consider lie in the relations that are used to define the matrices $M_i$ and the sizes of these matrices; in particular the value of $r$ may be 66, 28, or 16.  The relations used in \cite{Har15} are based on a deformation approach that in the case of a plane quartic curve $X:=f(x_0,x_1,x_2)=0$ introduces an auxiliary polynomial $g(x_0,x_1,x_2)=x_0^4+x_1^4+x_2^4$ and derives relations between the coefficients that appear in the terms of the binomial expansion of $(f+tg)^{p-1}$, where $t$ is an auxiliary parameter.  These relations yield $66\times 66$ matrices $M_i$.  Rather than using the general algorithm given in \cite{Har15}, which does not not require $X$ to be smooth or even a curve (it can be any hypersurface), one can use these matrices to directly compute the coefficients of $f^{p-1}$ that appear in the Cartier--Manin matrix $A_p$ via \cite[Thm.\,4.1]{Har15}, as we explain in \S\ref{sec:algorithms}.
With appropriate optimizations the resulting algorithm is quite practical and faster than previous approaches, as demonstrated by the timings in Table~\ref{table:Fptimings}.

However, the main focus of this paper is deriving new relations that yield smaller matrices~$M_i$. In contrast to \cite{Har15}, which uses relations that involve coefficients of $m$th-powers of the homogeneous polynomial $F$ that defines $X$, where the parameter $m$ may vary, here we fix~$m$.  This forces us to impose nondegeneracy conditions on~$F$ that are not required in \cite{Har15}, but it yields $28\times 28$ matrices, and the resulting algorithms for computing Cartier--Manin matrices, either for a single prime $p$ or all good $p\le N$ are substantially faster in practice than those that use the $66\times 66$ matrices based on \cite{Har15}.  The relations we obtain are not independent, and we develop tools that allow us to compress them.  This yields $16\times 16$ matrices of full rank with slightly larger coefficients that provides a further substantial improvement in practical running times; see Tables~\ref{table:Fptimings}--\ref{table:sizes}.

Our algorithms for smooth plane quartics are not as fast as those that have been developed for genus 3 curves of a special form, such as hyperelliptic or superelliptic curves; see Table~\ref{table:Qtimings2} for a comparison.  Nevertheless, for general genus 3 curve the algorithms we present substantially extend the practical range of $N$ one may consider.  This played a key role in \cite{FKS21,FKS22} where a preliminary version of our algorithm was used to compute Sato--Tate distributions, and in computing the $L$-functions of the nonhyperelliptic genus 3 curves tabulated in \cite{Sut19}.

We conclude this introduction with an outline of the paper.  After briefly recalling the definition of the Cartier--Manin matrix and some of its properties in Section \ref{sec:cartier}, we devote Sections \ref{sec:setup} and \ref{sec:shifting} to developing the recurrences that determine the matrices $M_i$ used by our algorithms; the main result used to define the $28\times 28$ matrices $M_i$ appears in Lemma~\ref{lemma:construction phi}, and the result that allows us to compress them to $16\times 16$ matrices appears in Lemma~\ref{lemma:compression}. The algorithms themselves are presented in Section~\ref{sec:algorithms}, along with an analysis of their complexity, and Section~\ref{sec:timings} compares the performance of our algorithms to each other and to existing approaches for counting points on smooth plane quartics, as well as to previously developed average polynomial-time algorithms for hyperelliptic and superelliptic genus 3 curves.

\section{The Cartier matrix of a smooth plane curve}\label{sec:cartier}

In this section we recall the definition of the Cartier matrix of a smooth plane curve, following \cite{Sut20}.
Let $k$ be a perfect field of characteristic $p>0$, let $K$ be a function field of transcendence degree one with field of constants $k$,
and let $\Omega_K$ denote its module of differentials, which we identify with its module of Weil differentials via \cite[Def.\,4.17]{Sti09} and \cite[Rm.\,4.3.7]{Sti09}.
Let $x\in K$ be a separating element, so that $K/k(x)$ is a finite separable extension, and let $K^p$ denote the subfield of $p$th powers.  Then $(1,x,\ldots, x^{p-1})$ is a basis for~$K$ as a $K^p$-vector space, and every $z\in K$ has a unique representation of the form
\[
z = z_0^p+z_1^px+\cdots + z_{p-1}^px^{p-1},
\]
with $z_i\in K$. Each rational differential form $\omega = zdx$ can then be written uniquely as
\[
\omega = (z_0^p+z_1^px+\cdots z_{p-1}^px^{p-1})dx.
\]
The (modified) \textsf{Cartier operator} $\Cartier\colon \Omega_K\to\Omega_K$ is then defined by
\[
\Cartier(\omega) \coloneqq z_{p-1}dx.
\]
It maps regular differentials to regular differentials and thus restricts to an operator on the space $\Omega_K(0)\coloneqq\{\omega\in\Omega_K:\omega=0\text{ or } \divisor(\omega)\ge 0\}$, which is a $k$-vector space whose dimension~$g$ is the genus of $K$. See \cite[Ex.\,4.12-17]{Sti09} for these and other standard facts about the Cartier operator.

\begin{definition}
Let $\vec{\omega}\coloneqq (\omega_1,\ldots,\omega_g)$ be a basis for $\Omega_K(0)$ and define $a_{ij}\in k$ via
\[
\Cartier(\omega_j)=\sum_{i=1}^g a_{ij}\omega_i.
\]
The \defi{Cartier--Manin} matrix of $K$ (with respect to $\vec{\omega}$) is the matrix $A\coloneqq [a_{ij}]\in k^{g\times g}$.
\end{definition}

If $X/k$ is a smooth projective curve with function field $K$, we also call $A$ the Cartier--Manin matrix of $X$.  This matrix is closely related to the \defi{Hasse--Witt} matrix $B$ of $X$, which is defined as the matrix of the $p$-power Frobenius operator acting on $H^1(X,\calO_X)$ with respect to some basis.  As explained in \cite{AH19}, the matrices $A$ and $B$ are related by Serre duality, and for a suitable choice of basis one finds that $B = [a_{ij}^p]^{\mathsf{T}}$.
In the case of interest to us $k=\F_p$ is a prime field and the Cartier--Manin and Hasse--Witt matrices are simply transposes, hence have the same rank and characteristic polynomials.  But we shall follow the warning/request of~\cite{AH19} and call $A$ the Cartier--Manin matrix, although one can find examples in the literature where $A$ is called the Hasse--Witt matrix (see \cite{AH19} for a list).

Following St\"ohr--Voloch \cite{SV87} we write $K$ as $k(x)[y]/(F)$, where $x\in X$ is a separating element and $y$ is an integral generator for the finite separable extension $K/k(x)$ with minimal polynomial $F\in k[x][y]$.
We now define the differential operator
\[
\nabla \coloneqq \frac{\partial^{2p-2}}{\partial x^{p-1}\partial y^{p-1}},
\]
which maps $x^{(i+1)p-1}y^{(j+1)p-1}$ to $x^{ip}y^{jp}$ and annihilates monomials not of this form; it thus defines a semilinear map $\nabla\colon K\to K^p$.
Writing $F_y$ for $\frac{\partial}{\partial y} F\in k[x,y]$, for any $h\in K$ we have
\begin{equation}\label{eq:SV1}
\Cartier\left(h\frac{dx}{F_y}\right) = \left(\nabla (F^{p-1}h)\right)^{1/p}\frac{dx}{F_y},
\end{equation}
by \cite[Thm.\,1.1]{SV87}.
If we choose a basis for $\Omega_X(0)$ using regular differentials of the form~$h\frac{dx}{F_y}$, we can compute the action of the Cartier operator on this basis via \eqref{eq:SV1}.  To construct such a basis, we use differentials of the form
\begin{equation}\label{eq:omega}
\omega_{k\ell}\coloneqq x^{k-1}y^{\ell-1}\frac{dx}{F_y}\qquad (k,\ell\ge 1,\ \ k+\ell\le \deg(F)-1).
\end{equation}
Writing $F(x,y)^{p-1}=\sum_{i,j} F^{p-1}_{ij}x^iy^j$ (defining $F^{p-1}_{i,j}\in k$ for all $i,j\in\Z$), for $k,\ell\ge 1$ we have
\begin{equation}\label{eq:SV2}
\nabla\left(\sum_{i,j\ge 0}F^{p-1}_{ij}x^{i+k-1}y^{j+\ell-1}\right) = \sum_{i,j\ge 1} F^{p-1}_{ip-k,\,jp-\ell} x^{(i-1)p}y^{(j-1)p}.
\end{equation}
Now $F^{p-1}_{ip-k,\,jp-\ell}$ is nonzero only when $(i+j)p-(k+\ell)\le (p-1)\deg(F)$, and $k+\ell\le \deg(F)-1$, so we can restrict the sum on the RHS to $i+j\le \deg(F)-1$.
From \eqref{eq:SV1} and \eqref{eq:SV2} we obtain
\begin{equation}\label{eq:SV3}
\Cartier(\omega_{k\ell}) = \sum_{i,j\ge 1} \left(F_{ip-k,\,jp-\ell}^{p-1}\right)^{1/p}\omega_{ij}.
\end{equation}

When $X$ is a smooth plane curve the complete set of $\omega_{ij}$ defined in \eqref{eq:omega} is a basis for~$\Omega_K(0)$ and we can read off the entries of the Cartier--Manin matrix $A$ of $X$ directly from \eqref{eq:SV3}.  Following the convention in \cite{Sut20}, we order our basis $\boldsymbol{\omega}\coloneqq(\omega_{ij})$ for $\Omega_k(0)$ in increasing order by $j$ and then $i$, so that $\boldsymbol{\omega}=(\omega_{11},\omega_{21},\ldots,\omega_{12},\ldots)$, and we view the Cartier--Manin matrix as acting on the column vector $\boldsymbol{\omega}^{\textsf{T}}$, so that we may express \eqref{eq:SV3} as $\mathcal C(\boldsymbol{\omega}^{\textsf{T}})=A\boldsymbol{\omega}^{\textsf{T}}$.

If~ $X\colon f(x_0,x_1,x_2)=0$ is a smooth plane quartic curve with $f(0,1,0)\ne 0$ (an assumption that will hold under non-degeneracy constraints we impose on $X$), then we may write its function field as $k(x)[y]/(F(x,y))$ with $x=x_0/x_2$ and $y=x_1/x_2$ so that its Cartier--Manin matrix with respect to the basis in~\eqref{eq:omega} is
\begin{equation}\label{equation:cartier--main spq}
A = \begin{bmatrix}\vspace{4pt}
f^{p-1}_{p-1,\,p-1,\,2p-2} & f^{p-1}_{2p-1,\,p-1,\,p-2} & f^{p-1}_{p-1,\,2p-1,p-2}\\\vspace{4pt}
f^{p-1}_{p-2,\,p-1,\,2p-1} & f^{p-1}_{2p-2,\,p-1,\,p-1} & f^{p-1}_{p-2,\,2p-1,p-1}\\
f^{p-1}_{p-1,\,p-2,\,2p-1} & f^{p-1}_{2p-1,\,p-2,\,p-1} & f^{p-1}_{p-1,\,2p-2,p-1}
\end{bmatrix},
\end{equation}
where $f_{i,j,k}^{p-1}$ denotes the coefficient of the term $x_0^ix_1^jx_2^k$ in $f(x_0,x_1,x_2)^{p-1}$.

An essential property of the Cartier--Manin matrix is the identity
\begin{equation}\label{eq:CMidentity}
\det(I-TA)\equiv L_p(T)\bmod p,
\end{equation}
where $L_p(T)$ is the numerator of the zeta function of $X$ defined in~\eqref{eq:zeta}; see \cite[Thm.\,3.1]{Katz73} and \cite[Thm.\,1]{Man65}.
In particular, we have $\tr A \equiv a_p \bmod p$, where $a_p$ is the trace of Frobenius.
The Weil bounds imply $|a_p|\le 2g\sqrt{p}$, which allows us to derive $\#X(\F_p)=p+1-a_p$ from $\tr A$ for all $p>16g^2=144$ (for $g=3$).

\begin{remark}
All of our algorithms compute $\#X(\F_p)=p+1-a_p$ by computing the Cartier--Manin matrix $A$ and lifting $\tr A\in \Z/p\Z$ to the unique $a_p\in \Z$ with $|a_p|\le 6\sqrt{p}$ when $p>144$. For $p\le 144$ we are happy to count points na\"ively via \eqref{eq:naive}.
\end{remark}

\section{Setup}\label{sec:setup}

Throughout this section, $R$ denotes one of the rings $\Z$ or $\F_p$.
Many of the results we use hold in greater generality, but we make no attempt to generalize them beyond the cases of interest to us here.

We write $R[x^\pm]$ for the Laurent polynomial ring $R[x_0, x_0^{-1}, \ldots, x_n, x_n^{-1}]$ in $n+1$ variables.
We use multi-index notation: for $v \coloneqq (v_0, \dots, v_n) \in \Z^{n+1}$, we write $x^v$ for the monomial $x_0^{v_0} \cdots x_n^{v_n}$.
For $G \in R[x^\pm]$ we write $G_v$ for the coefficient of $G$ at the monomial~$x^v$.
We also define the \defi{degree} of $v\in \Z^{n+1}$ to be $\deg v\coloneqq \deg x^v=\sum_{i=0}^n v_i$.

For $\ell \in \Z$, we write $R[x^\pm]_\ell$ for the $R$-submodule of $R[x^\pm]$ generated by the monomials of degree $\ell$.
More generally, for any subset $S \subseteq \Z^{n+1}$, we define $R[x^\pm]_S$ to be the $R$-submodule of Laurent polynomials supported on~$S$, consisting of all $G \in R[x^\pm]$ such that $G_v = 0$ for $v \notin S$.
We typically use this notation in the case that $S$ corresponds to a finite set of monomials, all of the same degree.
For $G \in R[x^\pm]$ we define $\restr{G}{S}$, the \defi{restriction of $G$ to $S$},
to be the polynomial $\sum_{v \in S} G_v x^v\in R[x^\pm]_S$.

For any $R$-submodule $M \subseteq R[x^\pm]$, we put $M_\ell \coloneqq M \cap R[x^\pm]_\ell$.
In particular, let $R[x]$ denote the subring $R[x_0, \ldots, x_n]$;
then $R[x]_\ell$ is the submodule of homogeneous polynomials of degree $\ell$,
or the zero submodule if $\ell < 0$.
More generally, if $I$ is a homogeneous ideal of $R[x]$, then $I_\ell$ is the $R$-submodule consisting
of homogeneous polynomials of degree $\ell$ in $I$.
The monomials generating $R[x]_\ell$ are indexed by the set $D_\ell\coloneqq \{v\in \Z_{\ge 0}^{n+1}:\deg v = \ell\}$ of cardinality $\# D_\ell = \dim_R R[x]_\ell = \binom{\ell+n}{n}$ for $\ell \geq 0$,
with $D_\ell = \emptyset$ for $\ell < 0$.

We denote by $K$ the fraction field of $R$, which is either $\Q$ or~$\F_p$.
All of the definitions for $R[x^\pm]$ above may be extended in the obvious way to $K[x^\pm]$.
We write $\P^n _{K} = \Proj K[x]$ for projective $n$-space over $K$.

For the rest of the section we fix a homogeneous polynomial $F \in R[x]_d$ of degree $d\ge 2$.
We always assume that $d \neq 0$ in $R$; in particular, if $R = \F_p$, then we require that $p \ndivides d$.
Our goal is to establish a framework for efficiently computing individual coefficients $F^m_u\coloneqq (F^m)_u$,
for a prescribed integer $m \geq 0$, without computing the entire polynomial $F^m$.
Our strategy will be to observe that $F^m$ satisfies certain partial differential equations (see \eqref{equation:GF}),
which imply various relations between nearby coefficients of $F^m$.

\begin{definition}
For $\ell \in \Z_{\ge 0}$ and $v \in \Z^{n+1}$ we define $D(v, \ell) \coloneqq \{ v - w: w \in D_\ell \} \subseteq \Z^{n+1}$.
The set $D(v,\ell)$ may be thought of as an inverted simplex of size $\ell$ centered at $v$.
\end{definition}

We will study the vectors of coefficients of $\restr{F^m}{D(v,\ell)}$,
for certain small integers $\ell$ and $v \in \Z^{n+1}$ with $\deg v = dm + \ell$.
As we will see, the differential equations lead naturally to relations among these vectors, for fixed $m$, as we vary~$v$.

\begin{remark}
When $n=2$ and $F$ defines a smooth plane curve $X$ in $\P^2_{\F_p}$ of genus $g=\binom{d-1}{2}$, the Cartier--Manin matrix of $X$ consists of~$g^2$ coefficients $F^{p-1}_u$ with $u\in D(v,\ell)$ for $g$ particular choices of $v$ of degree $d(p-1)+\ell$ with $\ell=d-3$. It turns out to be more convenient to use $m=p-2$, as we will eventually want $d(m+1)\ne 0$ in $\F_p$, and to use $v$ of degree $d(p-2)+\ell$ with $\ell = 2d-2$.  For smooth plane quartics we have $n=2$, $d=4$, and $\ell=nd-n=6$, values the reader may find useful to keep in mind.
\end{remark}

Let $I_F$ be the homogeneous ideal $\langle \partial_0 F, \ldots, \partial_n F \rangle$ in $K[x]$,
where $\partial_{i}$ is the degree-preserving differential operator $\partial_i \coloneqq x_i \frac{\partial}{\partial x_i}$.
For $\ell \in \Z$, the $K$-vector space $K[x]_\ell / (I_F)_\ell$ is spanned by the monomials $\{x^\beta : \beta \in D_\ell\}$,
so we may choose a subset $B_\ell \subseteq D_\ell$ such that $\{x^\beta:\beta \in B_\ell\}$  projects to a basis of $K[x]_\ell / (I_F)_\ell$.
For the rest of the discussion, we assume a choice for $B_\ell$ has been fixed for each~$\ell$.
Note that for $\ell < d$ we have $(I_F)_\ell=0$, in which case $B_\ell = D_\ell$.

\begin{definition}
Let $b_\ell \coloneqq \dim_K K[x]_\ell/(I_F)_\ell = \# B_\ell \le \# D_\ell$.
For $v\in \Z^{n+1}$ we define the set $B(v, \ell) \coloneqq \{v - \beta : \beta \in B_\ell\} \subseteq D(v,\ell) \subseteq \Z^{n+1}$.
We also define the $K$-vector spaces $\calD_{v,\ell} \coloneqq {K[x^{\pm}]}_{D(v,\ell)}$
and $\calB_{v,\ell} \coloneqq {K[x^{\pm}]}_{B(v,\ell )} \subseteq \calD_{v, \ell}$.
\end{definition}

We recall the following Hilbert series computation due to Macaulay~\cite{Mac1916}.

\begin{lemma}\label{lemma:macaulay}
  Let $h_0, \ldots, h_n$ be homogeneous polynomials in $K[x]$, of positive degree with no common zeros in $\P^n_K$.
  For $\ell \geq 0$, let
  \begin{equation*}
    \delta_\ell \coloneqq \dim_K K[x]_\ell/\langle h_0, \ldots, h_n \rangle_\ell.
  \end{equation*}
  Then, in $\Z[t]$ we have the identity
  \begin{equation*}
    \sum_{\ell \ge 0} \delta_\ell t^\ell = \prod_{i=0}^n (1 + t + \cdots + t^{\deg h_i - 1}).
  \end{equation*}
\end{lemma}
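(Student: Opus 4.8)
The plan is to reduce the identity to a Hilbert series computation for the Koszul complex. First I would unpack the hypothesis: saying that $h_0,\dots,h_n$ have no common zero in $\P^n_K$ means, by the projective Nullstellensatz, that the homogeneous ideal $J \coloneqq \langle h_0,\dots,h_n\rangle$ contains a power of the irrelevant ideal $\frakm \coloneqq \langle x_0,\dots,x_n\rangle$. Equivalently, $A \coloneqq K[x]/J$ is a finite-dimensional $K$-algebra, so $\delta_\ell = \dim_K A_\ell$ vanishes for $\ell \gg 0$ and $\dim A = 0$. Since $K[x]$ is a polynomial ring of Krull dimension $n+1$ and $A$ is its quotient by exactly $n+1$ homogeneous elements, the $h_i$ form a homogeneous system of parameters; and because $K[x]$ is Cohen--Macaulay, every system of parameters is a regular sequence. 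This is the step I expect to be the crux: the rest is formal bookkeeping, but the regular-sequence property is precisely where the ``no common zeros'' hypothesis enters and is what forces the Hilbert series into the predicted closed form.

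Granting that $h_0,\dots,h_n$ is a regular sequence, the Koszul complex $\bigwedge^\bullet K[x]^{n+1} \to A$ associated to the map $K[x]^{n+1}\to K[x]$, $e_i\mapsto h_i$, is then a finite free resolution of $A$ by graded $K[x]$-modules. Writing $d_i \coloneqq \deg h_i$, its $j$-th term is $\bigoplus_{i_1<\cdots<i_j} K[x](-d_{i_1}-\cdots-d_{i_j})$, which has Hilbert series $(1-t)^{-(n+1)} e_j(t^{d_0},\dots,t^{d_n})$, where $e_j$ is the $j$-th elementary symmetric function. As the Hilbert series is additive along exact sequences of graded modules, taking the alternating sum over $j=0,\dots,n+1$ gives
\begin{equation*}
\sum_{\ell\ge 0}\delta_\ell t^\ell = \frac{1}{(1-t)^{n+1}}\sum_{j=0}^{n+1}(-1)^j e_j(t^{d_0},\dots,t^{d_n}) = \frac{1}{(1-t)^{n+1}}\prod_{i=0}^n\bigl(1-t^{d_i}\bigr).
\end{equation*}
Cancelling one factor of $1-t$ against each $1-t^{d_i} = (1-t)(1+t+\cdots+t^{d_i-1})$ yields $\prod_{i=0}^n(1+t+\cdots+t^{d_i-1})$, which is the asserted identity; in particular the right-hand side is a polynomial, consistent with $\delta_\ell = 0$ for large $\ell$.

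If one wished to avoid invoking the Cohen--Macaulay property as a black box, the alternative is a direct induction showing that $h_i$ is a nonzerodivisor modulo $\langle h_0,\dots,h_{i-1}\rangle$ for each $i$, using that the latter quotient is equidimensional of dimension $n+1-i$ so that $h_i$ — having no common zero with $h_0,\dots,h_{i-1}$ — lies in no associated prime; but this equidimensionality is again the Cohen--Macaulay input in disguise, so there is no genuine economy. I would present the Koszul-complex version, since it makes the emergence of the elementary symmetric functions, and hence of the product formula, entirely transparent.
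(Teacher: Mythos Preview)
Your argument is correct. The paper's own proof is simply a one-line citation to Macaulay's 1916 monograph, so there is no detailed argument to compare against; what you have written is the standard modern proof, and it is complete as stated. (In fact the paper contains a commented-out alternative proof that follows exactly your outline---deduce from the no-common-zeros hypothesis that the $h_i$ form a regular sequence via the Cohen--Macaulay property of $K[x]$, then invoke the known Hilbert series formula for a complete intersection---but it cites the final formula rather than deriving it via the Koszul resolution as you do.) Your explicit Koszul computation is a nice touch: it makes the product $\prod_i(1-t^{d_i})$ appear transparently as the alternating sum of elementary symmetric functions, whereas a bare citation leaves that mechanism hidden.
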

\begin{proof} See Theorem 58 in \cite[pp.~64--66]{Mac1916}.\end{proof}

\iffalse
\begin{proof}
  Let $\frakm \coloneqq \langle x_0, \ldots, x_n \rangle$ and $I \coloneqq \langle h_0, \ldots, h_n \rangle \subseteq \frakm$.
  The hypothesis that $h_0, \ldots, h_n$ have no common zeros implies $\rad I = \frakm$,
  hence $\dim K[x]/I = 0$, and therefore $\dim K[x]_\frakm/I_\frakm = 0$.
  Since $\dim K[x]_\frakm = n + 1$, part (i) of~\cite[Theorem 31, Ch.~6]{Mat80} implies that $I_\frakm$ has height $n+1$.
  Part (iii) of the same theorem shows that $h_0, \ldots, h_n$ is a regular sequence in $K[x]_\frakm$, and hence also in $K[x]$.
  As $h_0, \ldots, h_n$ is a regular sequence, the formula for the Hilbert series $\sum_{\ell \geq 0} \delta_\ell t^\ell$ follows from~\cite[Cor.~5.2.17]{KR05} (or from~\cite[Exercise 21.17(b)]{Eis95}).
\end{proof}
\fi

Recall that the \textsf{discriminant} $\Delta_d(F)$ of $F\in R[x]_d$ is determined up to sign by the formula
\[
\Delta_d(F) = \pm d^{((-1)^{n+1}-(d-1)^{n+1})/d} \Res_{d-1}\Bigl(\frac{\partial F}{\partial x_0},\ldots,\frac{\partial F}{\partial x_n}\Bigr),
\]
where $\Res_e(h_0,\ldots,h_n)$ is the \textsf{resultant}, the irreducible integer polynomial in the $(n+1)\binom{e+n}{n}$ coefficients of $h_0,\ldots,h_n\in R[x]_e$ that vanishes if and only if $h_0,\ldots,h_n$ have a common zero in $\P_K^n$ and satisfies $\Res_e(x_0^e,\ldots,x_n^e)=1$; see \cite[pp.\,433--435]{GKZ94} for details.

The hypersurface defined by $F\in R[x]_d$ is smooth if and only if
$\partial F / \partial x_0, \ldots, \partial F / \partial x_n$ have no common zeros in $\P_K^n$, that is, if and only if $\Delta_d(F)\ne 0$.
(Note that any common zero of the $\partial F / \partial x_i$ is automatically a zero of $F$ by Euler's identity $d \cdot F = \sum_i \partial_i F$, since $d\ne 0$ in $R$.)
We say that $F$ is \textsf{nondegenerate} if $\partial_0 F, \ldots, \partial_n F$
have no common zeros in $\P_K^n$.  Nondegeneracy of $F$ is equivalent to requiring that the intersection of the hypersurface defined by $F$ with every set of coordinate hyperplanes is smooth (see \cite[Prop.\,4.6]{Bat93}, \cite[Prop.\,1.2]{CV09}); this implies that the hypersurface defined by $F$ is smooth, but it is a stronger condition.  If we let $D_d(S)\coloneqq \{v\in D_d:v_i=0\text{ for } i\in S\}$ and define
\begin{equation}\label{eq:discstar}
\Delta_d^*(F)\coloneqq \prod_{S\subsetneq \{0,\ldots,n\}} \Delta_d\bigl(F|_{D_d(S)}\bigr),
\end{equation}
where the discriminants on the right are taken with respect to the variables not in $S$,
then we see that $F$ is nondegenerate if and only if $\Delta_d^*(F)\ne 0$.

For $n=1$ we have $\Delta_d^*(F)=\pm F_{0,d}F_{d,0}\Delta_d(F) = \pm F_{0,d}F_{d,0}\disc F(t,1)$, where $\disc$ denotes the usual discriminant of a univariate polynomial in $R[t]$. For $n=2$ we have
\[
\Delta_d^*(F)=\pm F_{0,0,d}F_{0,d,0}F_{d,0,0}\disc F(t,1,0)\disc F(t,0,1)\disc F(0,t,1)\Delta_d(F).
\]

Let $H_F(t) \coloneqq \sum_{\ell \ge 0} b_\ell t^\ell\in \Z[t]$ denote the Hilbert series of the quotient ring $K[x]/I_F$.

\begin{corollary}\label{corollary:sum d^n}
 If $F \in {R[x]}_d$ is nondegenerate then
  \begin{equation*}
    H_F(t) \coloneqq \sum_{\ell \ge 0} b_\ell t^\ell = \bigl(1 + t + \cdots + t^{d-1}\bigr)^{n+1},
    \end{equation*}
    and we have $\sum_{\ell \equiv k\bmod d} b_\ell = d^n$ for any integer $k$.
\end{corollary}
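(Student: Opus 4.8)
The plan is to deduce both statements directly from Lemma~\ref{lemma:macaulay}.

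First I would apply Lemma~\ref{lemma:macaulay} with $h_i \coloneqq \partial_i F = x_i\,\partial F/\partial x_i$ for $i = 0, \ldots, n$. Each $h_i$ is homogeneous of degree $d \geq 2 > 0$, and the hypothesis that $F$ is nondegenerate is precisely the statement that $\partial_0 F, \ldots, \partial_n F$ have no common zero in $\P^n_K$; thus the lemma applies. Since $K[x]/\langle h_0, \ldots, h_n\rangle = K[x]/I_F$ and $\deg h_i = d$ for every $i$, the lemma yields
\[
H_F(t) = \sum_{\ell \geq 0} b_\ell t^\ell = \prod_{i=0}^n \bigl(1 + t + \cdots + t^{d-1}\bigr) = \bigl(1 + t + \cdots + t^{d-1}\bigr)^{n+1},
\]
which is the first assertion. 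Note that nondegeneracy enters only through Lemma~\ref{lemma:macaulay}, and it is genuinely needed: without it the $\partial_i F$ need not form a regular sequence and the Hilbert series can be different.

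For the second assertion I would extract, from the polynomial $P(t) \coloneqq (1 + t + \cdots + t^{d-1})^{n+1}$, the sum of its coefficients in a fixed residue class modulo $d$; equivalently, compute the image of $P(t)$ in $\Z[t]/(t^d - 1)$. Write $e(t) \coloneqq 1 + t + \cdots + t^{d-1}$. A direct count of the pairs $(i,j)$ with $0 \leq i, j \leq d-1$ and $i + j \equiv r \pmod d$ shows that $e(t)^2 \equiv d\, e(t) \pmod{t^d - 1}$, and hence by induction $e(t)^{n+1} \equiv d^n e(t) \pmod{t^d - 1}$. Since $\sum_{\ell \equiv k \bmod d} b_\ell$ is exactly the coefficient of $t^{k \bmod d}$ in the image of $P(t) = e(t)^{n+1}$ in $\Z[t]/(t^d-1)$, and that image is $d^n e(t) = d^n(1 + t + \cdots + t^{d-1})$, all of whose coefficients equal $d^n$, we conclude $\sum_{\ell \equiv k \bmod d} b_\ell = d^n$ for every integer $k$. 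Alternatively one may run a roots-of-unity filter: $P(1) = d^{n+1}$ while $P(\zeta) = 0$ for every primitive $d$-th root of unity $\zeta$, giving $\frac1d \sum_{\zeta^d = 1} \zeta^{-k} P(\zeta) = d^n$.

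There is no serious obstacle here: the content is entirely in Lemma~\ref{lemma:macaulay}, and the remaining steps are the elementary observation that nondegeneracy is exactly the no-common-zeros hypothesis together with a routine coefficient extraction. The only point requiring a moment's care is keeping the operators $\partial_i F = x_i\,\partial F/\partial x_i$ (rather than the plain partial derivatives $\partial F/\partial x_i$) in view, so that the degrees fed into Lemma~\ref{lemma:macaulay} are all equal to $d$; this is why every factor in the product is the same polynomial $1 + t + \cdots + t^{d-1}$.
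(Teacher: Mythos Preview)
Your proof is correct and follows essentially the same route as the paper. For the first claim you both invoke Lemma~\ref{lemma:macaulay} with $h_i=\partial_i F$; for the second, the paper uses the roots-of-unity filter you list as your alternative, while your primary argument (reducing modulo $t^d-1$ via $e(t)^2\equiv d\,e(t)$) is the same computation phrased in $\Z[t]/(t^d-1)$ rather than via evaluation at $d$th roots of unity.
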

\begin{proof}
  The first claim follows from Lemma~\ref{lemma:macaulay}.
  For the second, fix $k\in \Z$ and let $\zeta$ be a primitive $d$th root of unity.
  We have
  \begin{equation*}
      \sum_{i=0}^{d-1} H_F(\zeta^i) \zeta^{-k i}
      = \sum_{i=0}^{d-1} \sum_{\ell \ge 0} b_\ell \zeta^{(\ell-k) i}
      = d\!\!\!\!\! \sum_{\ell\equiv k\bmod d}\!\!\!\!\! b_\ell,
  \end{equation*}
  and also
  \begin{equation*}
      \sum_{i=0}^{d-1} H_F(\zeta^i) \zeta^{-ki}
          = \sum_{i=0}^{d-1} (1 + \zeta^i + \cdots + (\zeta^i)^{d-1})^{n+1} \zeta^{-ki}
          = d^{n +1}.
  \end{equation*}
  Comparing these two expressions yields the desired result.
  \end{proof}

Let $m \geq 0$ and consider the system of differential equations for $G \in K[x^\pm]_{dm}$ given by
\begin{equation}\label{equation:GF}
  \partial_i (FG) = (m+1) (\partial_i F) G, \qquad i = 0, \ldots, n.
\end{equation}
The scalar multiples of $F^m$ are solutions to~\eqref{equation:GF}.
Note that the Euler identity
\begin{equation}\label{equation:Euler}
\sum_{i = 0}^n \partial_{i} (FG) = d (m+1) FG = (m + 1) \sum_{i = 0}^n (\partial_{i} F) G
\end{equation}
implies that one of these $n+1$ equations is redundant,
so for many purposes we may treat it as a system of only $n$ equations.

We now show that~\eqref{equation:GF} defines a system of linear equations on the coefficients of~$G$.
For any $w \in \Z^{n+1}$ of degree $dm+d$, equating coefficients in~\eqref{equation:GF} for the monomial $x^w$ gives rise to the system of linear equations
\begin{equation}\label{equation:GF at u}
  w_i \sum_{t \in D_d} F_t G_{w-t} = (m + 1) \sum_{t \in D_d} t_i F_t G_{w-t}, \qquad i=0,\ldots,n.
\end{equation}
Via \eqref{equation:Euler} we may view this as a system of $n$ equations in $\#D_d$ unknowns $G_u$ for $u\in D(w,d)$.

More generally, for any $\ell \geq d$ and $v \in \Z^{n+1}$ of degree $dm+\ell$ we may consider the system of linear equations involving the coefficients $G_u$ for $u\in D(v, \ell)$,
obtained by including the equations~\eqref{equation:GF at u} for each $w \in D(v,\ell-d)$.
Here we are using the fact that $D(v,\ell)$ is the union of the sets $D(w,d)$
as $w$ ranges over $D(v,\ell-d)$.
Explicitly, these equations are given by
\begin{equation}\label{equation:GF at u big}
   (v_i - s_i) \sum_{t \in D_d} F_t G_{v-s-t} = (m + 1) \sum_{t \in D_d} t_i F_t G_{v-s-t}, \qquad s \in D_{\ell-d},\ i=0,\ldots,n.
\end{equation}
Via \eqref{equation:Euler} we view this as a system of $n\#D_{\ell-d}$ equations in $\#D_\ell$ unknowns~$G_u$ for $u \in D(v,\ell)$.

\begin{definition}
   \label{definition:calE}
   Let $\calE_{v,\ell}$ denote the $K$-vector subspace of $\calD_{v,\ell} = K[x^\pm]_{D(v,\ell)}$
  consisting of those Laurent polynomials $G \in \calD_{v,\ell}$ satisfying the system \eqref{equation:GF at u big}.
\end{definition}

Note that $\calE_{v,\ell}$ is only defined when $\deg v$ is of the form $dm + \ell$ for some $m \geq 0$.
The value of $m$ is implicitly defined by $v$ and $\ell$: we always have $m = (\deg v - \ell)/d$, so a choice of $v$ and~$\ell$ determines a choice of $m$.

Since $F^m$ satisfies the original differential equations \eqref{equation:GF}, we see immediately that
\begin{equation*}
\restr{F^{m}}{D(v,\ell)} \in  \calE_{v, \ell}.
\end{equation*}
We also have the following basic result concerning inclusions of sets of the form $D(v,\ell)$.

\begin{lemma}\label{lemma:calE inclusions}
  Let $\ell, \ell' \geq d$ and let $v, v' \in \Z^{n+1}$ have degrees $d m  + \ell$ and $dm + \ell'$ respectively.
  Assume that $D(v, \ell) \subseteq D(v', \ell')$.
  Then the restriction map $\calD_{v', \ell'} \twoheadrightarrow \calD_{v, \ell}$, $G \mapsto \restr{G}{D(v,\ell)}$, maps $\calE_{v', \ell'}$ into $\calE_{v, \ell}$.
\end{lemma}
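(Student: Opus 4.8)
The plan is to unwind the definitions and exhibit the linear system cutting out $\calE_{v,\ell}$ as a sub-collection of the system cutting out $\calE_{v',\ell'}$, read off along the restriction map. I would begin by noting that the hypothesis forces both spaces to be defined with respect to the \emph{same} value of $m$, namely $m=(\deg v-\ell)/d=(\deg v'-\ell')/d$, so that the scalars $m+1$ appearing in \eqref{equation:GF at u big} agree for the two systems. Recall that $\calE_{v',\ell'}$ consists of those $G\in\calD_{v',\ell'}$ satisfying \eqref{equation:GF at u big} for all pairs $(s',i)$ with $s'\in D_{\ell'-d}$ and $0\le i\le n$, and likewise $\calE_{v,\ell}$ is cut out by the equations indexed by $(s,i)$ with $s\in D_{\ell-d}$. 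Given $G\in\calE_{v',\ell'}$, set $\bar G\coloneqq\restr{G}{D(v,\ell)}$; then the $(s,i)$-equation for $\bar G$ involves only the coefficients $\bar G_{v-s-t}$ with $t\in D_d$, and since $s+t\in D_\ell$ we have $v-s-t\in D(v,\ell)$, so $\bar G_{v-s-t}=G_{v-s-t}$. Thus restriction discards none of the coefficients that actually occur in the system for $\calE_{v,\ell}$.

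The crux of the argument is to match up indices between the two systems. I would first observe that $D(v,\ell)\subseteq D(v',\ell')$, together with the normalization $\deg v'-\deg v=\ell'-\ell$, forces $v'\ge v$ coordinatewise: for each $i$ one picks $w\in D_\ell$ with $w_i=0$ (possible since $\ell\ge d\ge 2$ and there is more than one variable) and writes $v-w=v'-w'$ with $w'\in D_{\ell'}$, whence $v'_i-v_i=w'_i\ge 0$. Consequently $v'-v$ lies in $\Z_{\ge0}^{n+1}$ and has degree $\ell'-\ell$, so for every $s\in D_{\ell-d}$ the vector $s'\coloneqq s+(v'-v)$ lies in $D_{\ell'-d}$. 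One then checks the identities $v'-s'-t=v-s-t$ and $v'_i-s'_i=v_i-s_i$ for all $t\in D_d$ and all $i$, so that the $(s',i)$-instance of \eqref{equation:GF at u big} for $G\in\calE_{v',\ell'}$ becomes, after substituting $G_{v-s-t}=\bar G_{v-s-t}$, precisely the $(s,i)$-instance of \eqref{equation:GF at u big} for $\bar G$. Letting $(s,i)$ range over all indices of the smaller system gives $\bar G\in\calE_{v,\ell}$. Surjectivity of $\calD_{v',\ell'}\to\calD_{v,\ell}$ is immediate from $D(v,\ell)\subseteq D(v',\ell')$, the map being ``discard the coefficients supported outside $D(v,\ell)$''.

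I do not expect a serious obstacle here. The one point that requires care is the combinatorial bookkeeping: recognizing that $D(v,\ell)\subseteq D(v',\ell')$ is equivalent, under the standing degree normalization, to the coordinatewise inequality $v'\ge v$, and that this is exactly the condition ensuring the index shift $s\mapsto s+(v'-v)$ keeps us inside the nonnegative simplex $D_{\ell'-d}$, so that the two systems line up equation by equation.
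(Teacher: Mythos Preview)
Your proposal is correct and takes essentially the same approach as the paper: both arguments recognize that the equations defining $\calE_{v,\ell}$ are, after the index shift $s\mapsto s+(v'-v)$, a subset of those defining $\calE_{v',\ell'}$. The paper compresses this into the single sentence ``the equations defining $\calE_{v,\ell}$ are a subset of those defining $\calE_{v',\ell'}$''; you have simply unpacked the combinatorial bookkeeping (in particular the observation that $v'\ge v$ coordinatewise) that makes this assertion precise.
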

\begin{proof}
  The equations defining $\calE_{v, \ell}$ are a subset of those defining $\calE_{v', \ell'}$.
\end{proof}

In the remainder of this section we develop further properties of the vector spaces $\calE_{v, \ell}$.
In particular, we compute their dimension and give explicit bases for certain cases of interest.

\begin{lemma}\label{lemma:compression}
   Let $\ell \geq d$, and let $v \in \Z^{n+1}$ be of degree $dm + \ell$.
  Consider the $K$-linear map
  \begin{equation*}
    \begin{alignedtikzcd}
      \pi_{v, \ell} \colon
      &[-2em]
      \calD_{v, \ell}
      \arrow{r}
      &
      \calB_{v,\ell}
      \oplus
      \calD_{v, \ell - d},
      \\
      &
      G
      \arrow[mapsto]{r}
      &
      \restr{G}{B(v, \ell)}
      +
      \restr{\bigl( FG \bigr)}{D(v, \ell - d)}.
    \end{alignedtikzcd}
  \end{equation*}
  The map $\pi_{v, \ell}$ may be represented by a matrix
  whose entries lie in $R$ and are independent of $v$.

  Moreover, there exists a nonzero constant $\lambda_{\ell} \in R$ and a $K$-linear map
  \begin{equation*}
    \begin{alignedtikzcd}
      \psi_{v,\ell} \colon
      \calB_{v,\ell}
      \oplus
      \calD_{v, \ell - d}
      \arrow{r} & \calD_{v, \ell}
    \end{alignedtikzcd}
  \end{equation*}
  such that the composition
  \[
  \psi_{v,\ell} \circ \pi_{v,\ell} \colon \calD_{v, \ell} \rightarrow \calD_{v, \ell}
  \]
  restricts to scalar multiplication by $(m+1) \lambda_{\ell}$ on $\calE_{v, \ell}$.
  The map $\psi_{v ,\ell}$ may be represented by a matrix whose entries are $R$-linear combinations of~$1,v_0,\ldots,v_n$ and $m$, which we may view as polynomials in $R[v, m] = R[v_0, \ldots, v_n, m]$ of degree at most $1$.
\end{lemma}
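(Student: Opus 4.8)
\emph{Proof proposal.} The first assertion is pure bookkeeping, so I would dispose of it quickly. Fix the monomial bases $\{x^{v-\alpha}:\alpha\in D_\ell\}$, $\{x^{v-\beta}:\beta\in B_\ell\}$, $\{x^{v-s}:s\in D_{\ell-d}\}$ of $\calD_{v,\ell}$, $\calB_{v,\ell}$, $\calD_{v,\ell-d}$. In these bases the $\calB_{v,\ell}$-component of $\pi_{v,\ell}$ is the $0$--$1$ coordinate projection keeping the slots indexed by $B_\ell\subseteq D_\ell$, and the $\calD_{v,\ell-d}$-component sends $\sum_\alpha G_{v-\alpha}x^{v-\alpha}$ to $\sum_{s\in D_{\ell-d}}\bigl(\sum_{t\in D_d}F_tG_{v-s-t}\bigr)x^{v-s}$, so its only nonzero matrix entries are among the coefficients $F_t\in R$ of $F$ (the entry joining column $\alpha$ to row $s$ is $F_{\alpha-s}$ when $\alpha-s\in D_d$, else $0$). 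In both components it is only the combinatorics of the index sets $D_\ell, B_\ell, D_d, D_{\ell-d}$ that decides where a basis vector goes, not $v$, which is exactly the claim.

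For the second assertion, the plan is to build $\psi_{v,\ell}$ from an explicit reconstruction formula for the coefficients of an element $G\in\calE_{v,\ell}$. Since $\{x^\beta\bmod(I_F)_\ell:\beta\in B_\ell\}$ is by construction a $K$-basis of $K[x]_\ell/(I_F)_\ell$, for each $\alpha\in D_\ell$ I fix an identity $x^\alpha=\sum_{\beta\in B_\ell}c_{\alpha\beta}x^\beta+\sum_{i=0}^n(\partial_iF)q_{\alpha,i}$ with $c_{\alpha\beta}\in K$, $q_{\alpha,i}\in K[x]_{\ell-d}$, depending only on $F$ and the chosen $B_\ell$; clearing a common denominator produces a nonzero $\lambda_\ell\in R$ (take $\lambda_\ell=1$ when $R=\F_p$) with $\lambda_\ell c_{\alpha\beta}\in R$ and $\lambda_\ell q_{\alpha,i}\in R[x]_{\ell-d}$. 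Multiplying the identity by $\lambda_\ell G$ and reading off the coefficient of $x^v$, using $(x^sH)_v=H_{v-s}$ and the fact that $\lambda_\ell q_{\alpha,i}$ is supported on $D_{\ell-d}$, expresses $\lambda_\ell G_{v-\alpha}$ as an $R$-combination of the $G_{v-\beta}$ ($\beta\in B_\ell$) and the $\bigl((\partial_iF)G\bigr)_{v-s}$ ($s\in D_{\ell-d}$, $0\le i\le n$). Only now do I use $G\in\calE_{v,\ell}$: equation \eqref{equation:GF at u big} for the relevant $s,i$ reads $(m+1)\bigl((\partial_iF)G\bigr)_{v-s}=(v_i-s_i)(FG)_{v-s}$, so multiplying the relation through by $m+1$ and substituting gives $(m+1)\lambda_\ell G_{v-\alpha}=(m+1)\sum_{\beta}(\lambda_\ell c_{\alpha\beta})G_{v-\beta}+\sum_{i,s}(\lambda_\ell q_{\alpha,i})_s(v_i-s_i)(FG)_{v-s}$. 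I then \emph{define} $\psi_{v,\ell}$ to be the linear map whose matrix in the above bases has entry $(m+1)\lambda_\ell c_{\alpha\beta}$ against the $\calB_{v,\ell}$-slot $\beta$ and $\sum_i(\lambda_\ell q_{\alpha,i})_s(v_i-s_i)$ against the $\calD_{v,\ell-d}$-slot $s$. Since $\lambda_\ell c_{\alpha\beta}$, $(\lambda_\ell q_{\alpha,i})_s$ and the integers $s_i$ all lie in $R$, these entries are polynomials in $R[v_0,\dots,v_n,m]$ of degree at most $1$; and for $G\in\calE_{v,\ell}$ the vector $\pi_{v,\ell}(G)$ has $\calB_{v,\ell}$-coordinates $G_{v-\beta}$ and $\calD_{v,\ell-d}$-coordinates $(FG)_{v-s}$, so the displayed identity says precisely that the $x^{v-\alpha}$-coordinate of $\psi_{v,\ell}(\pi_{v,\ell}(G))$ equals $(m+1)\lambda_\ell G_{v-\alpha}$ for all $\alpha$, i.e. $\psi_{v,\ell}\circ\pi_{v,\ell}$ is multiplication by $(m+1)\lambda_\ell$ on $\calE_{v,\ell}$.

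The one point requiring care — and what I expect to be the crux of the write-up — is the order of operations: the naive reconstruction divides by $m+1$, which breaks over $R$ when $m+1$ vanishes and would also spoil the degree bound. Clearing denominators with $\lambda_\ell$ and multiplying through by $m+1$ \emph{before} invoking the equations of $\calE_{v,\ell}$ turns $m+1$ into a genuine linear matrix entry of $\psi_{v,\ell}$ rather than a denominator, which is exactly why $\psi_{v,\ell}$ can be taken over $R$ with entries of degree at most one in $v$ and $m$. Everything else is routine unwinding of the definitions of $\calD$, $\calB$, $\calE$ and the coefficient pairing.
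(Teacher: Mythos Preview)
Your proposal is correct and follows essentially the same route as the paper: describe $\pi_{v,\ell}$ entrywise to see it is $v$-independent with entries in $R$, then for each $\alpha\in D_\ell$ express $\lambda_\ell x^\alpha$ as an $R$-combination of the $x^\beta$ ($\beta\in B_\ell$) and elements of $(\partial_iF)R[x]_{\ell-d}$, multiply by $(m+1)G$, take the $x^v$-coefficient, and replace $(m+1)((\partial_iF)G)_{v-s}$ by $(v_i-s_i)(FG)_{v-s}$ using the defining equations of $\calE_{v,\ell}$ to obtain \eqref{equation:phicoefficients}, from which $\psi_{v,\ell}$ is read off. Your emphasis on clearing denominators and multiplying by $m+1$ \emph{before} invoking the equations is exactly the mechanism the paper uses to keep the matrix entries in $R[v,m]$ of degree at most~$1$.
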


Note that when using matrices to represent maps such as $\pi_{v,\ell}$ and $\psi_{v,\ell}$,
we always work with respect to the obvious monomial bases.
For example, the columns of $\pi_{v,\ell}$ are indexed by~$D_\ell$,
and its rows are indexed by the concatenation of $B_\ell$ and $D_{\ell-d}$.
For this purpose we assume that some ordering of the monomials of each degree has been chosen,
such as the lexicographical ordering.

\begin{remark}
   One may think of $\pi_{v,\ell}$ as ``compressing'' a vector of
   length $\# D_\ell$ into a vector of length $\# B_\ell + \# D_{\ell - d}$.
   If the input vector lies in the subspace $\calE_{v,\ell}$,
   i.e., satisfies the appropriate differential equations,
   then no information is lost in the compression,
   and $\psi_{v,\ell}$ ``decompresses'' the result to recover the original vector
   (multiplied by a certain scalar).
\end{remark}

\begin{proof}
  We observe that $\pi_{v,\ell}$ may be represented by a matrix in which
  the rows corresponding to $\calB_{v,\ell}$ have entries in $\{0,1\}$,
  and the entries of the rows corresponding to $\calD_{v, \ell - d}$
  are either zero or of the form $F_u$ for some $u \in D_d$ with $(FG)_{v-w} = \sum_{u \in D_{d}} F_u G_{v-w-u}$ for $w \in D(v, \ell -d)$.  This matrix is the same for every $v \in \Z^{n+1}$ of degree $dm + \ell$.

  We now explain how to construct $\psi_{v,\ell}$.
  Our task is to construct a formula that recovers a polynomial $G \in \calE_{v,\ell}$
  from knowledge of $\restr{G}{B(v,\ell)}$ and $\restr{(FG)}{D(v,\ell-d)}$.

  First, it follows from the definition of $B_\ell$ that for any $u\in D_\ell$ we may write
  \begin{equation}\label{equation:compression}
    \lambda_\ell x^u = \sum_{i = 0}^n h_{u,i} \partial_i F  + \sum_{\beta \in B_\ell} c_{u,\beta} x^\beta,
  \end{equation}
  for some $\lambda_\ell, c_{u,\beta} \in R$ ($\lambda_\ell \neq 0$) and $h_{u,i} \in R[x]_{\ell - d}$.
  (For $u \in B_\ell\subseteq D_\ell$ we may take $h_{u,i} = 0$, $c_{u,u}=\lambda_\ell$, and $c_{u,\beta} = 0$ for $\beta\ne u$.)

  Now suppose that $G\in \calE_{v,\ell}$.
  Multiplying both sides of \eqref{equation:compression} by $(m + 1) G$ and equating coefficients of $x^v$ yields
  \begin{equation*}
     (m +1)\lambda_{\ell} G_{v-u} =
     \sum_{i = 0}^n \sum_{s \in D_{\ell -d}} (m + 1) (h_{u,i})_s \bigl( (\partial_i F) G \bigr)_{v-s}  + (m + 1) \sum_{\beta \in B_\ell}  c_{u,\beta}  G_{v - \beta}
  \end{equation*}
  for each $u \in D_\ell$.
  By assumption $G$ satisfies \eqref{equation:GF at u big},
  so
  \begin{equation}
     (m+1) ((\partial_i F) G)_{v-s} = (\partial_i (FG))_{v-s} = (v_i - s_i) (FG)_{v-s}
  \end{equation}
  for all $s \in D_{\ell-d}$ and $i = 0, \ldots, n$.
  Therefore, for each $u \in D_\ell$,
  \begin{equation}
     (m +1)\lambda_{\ell} G_{v-u}
     = \sum_{i = 0}^n \sum_{s \in D_{\ell - d}} (v_i - s_i) {(h_{u,i})}_s (F G)_{v-s}
     +
     (m + 1) \sum_{\beta \in B_\ell} c_{u,\beta} G_{v-\beta}.
     \label{equation:phicoefficients}
  \end{equation}

  The right hand side of \eqref{equation:phicoefficients} involves the coefficients of $F G$ on $D(v, \ell -d)$
  and the coefficients of $G$ on $B(v, \ell)$,
  so we may use this expression to define~$\psi_{v,\ell}$.
  Explicitly, for $H \in \calB_{v,\ell}$ and $J \in \calD_{v, \ell - d}$  we define $\psi_{v,\ell}(H+J) \in \calD_{v,\ell}$ via
  \begin{equation}\label{equation:phi definition}
    \psi_{v,\ell}(H + J)_{v-u} \coloneqq \sum_{i = 0}^n \sum_{s \in D_{\ell - d}} (v_i - s_i) {(h_{u,i})}_s J_{v-s} + (m + 1) \sum_{\beta \in B_\ell} c_{u,\beta} H_{v-\beta}.
  \end{equation}
  It is clear that the entries of the corresponding matrix are polynomials of degree
  at most $1$ in $v_0, \ldots, v_n, m$ with coefficients in $R$.
  By construction, if $G \in \calE_{v, \ell}$, then \eqref{equation:phicoefficients} implies that
  \[
    \begin{aligned}
      \psi_{v,\ell} (\pi_{v,\ell} (G))_{v - u}
      &=
      \psi_{v,\ell} \Bigl(
        \restr{G}{B(v, \ell)}
        +
        \restr{\bigl(FG \bigr)}{D(v, \ell - d)}
      \Bigr)_{v-u}
      \\
      &=
      \sum_{i = 0}^n \sum_{s \in D_{\ell - d}} (v_i - s_i) {(h_{u,i})}_s (FG)_{v-s} + (m+1)  \sum_{\beta \in B_\ell} c_{u,\beta} G_{v-\beta}
      \\
      &=  (m + 1)\lambda_\ell G_{v-u}
    \end{aligned}
  \]
  for $u \in D_\ell$.
  Thus $\psi_{v,\ell} \circ \pi_{v,\ell}$ restricts to scalar multiplication by $(m+1)\lambda_\ell$ on~$\calE_{v,\ell}$.
\end{proof}

\begin{definition}
We define $\calW_{v, \ell} \coloneqq \calB_{ v,\ell } \oplus \calB_{ v,\ell -d }$.  For $\ell < 2d$ this is the codomain of $\pi_{v,\ell}$ and the domain of $\psi_{v,\ell}$, since $B(v,\ell-d)=D(v,\ell-d)$ for $\ell-d < d$.
\end{definition}

\begin{corollary}\label{corollary:upper bound rank Ev}
  Let $d \le \ell < 2d$ and $v \in \Z^{n+1}$ of degree $dm + \ell$. Assume that $m \neq -1$ in $R$.
  Then
  \begin{equation}\label{equation:upper bound rank Ev}
    \dim_K \calE_{v, \ell} \leq \dim_K \calW_{v,\ell} = b_{\ell} + b_{\ell -d},
  \end{equation}
  and if $F$ is nondegenerate then we have $\dim_K \calE_{v, \ell} \leq d^n$.

  When equality holds in~\eqref{equation:upper bound rank Ev} we may restrict the domain of $\pi_{v,\ell}$ and the codomain of $\psi_{v,\ell}$ to obtain $K$-linear isomorphisms
  \begin{equation*}
    \pi_{v,\ell}^\calE \colon \calE_{v, \ell} \to \calW_{v, \ell}, \qquad
    \psi_{v,\ell}^\calE \colon \calW_{v, \ell} \to \calE_{v, \ell}.
  \end{equation*}
\end{corollary}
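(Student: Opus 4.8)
The plan is to extract everything from Lemma~\ref{lemma:compression}, since this corollary is essentially a structural consequence of the compression/decompression maps constructed there. The first thing I would pin down is that, in the range $d \le \ell < 2d$, the codomain $\calB_{v,\ell}\oplus\calD_{v,\ell-d}$ of $\pi_{v,\ell}$ and the domain of $\psi_{v,\ell}$ both coincide with $\calW_{v,\ell}=\calB_{v,\ell}\oplus\calB_{v,\ell-d}$: indeed $\ell-d<d$ forces $(I_F)_{\ell-d}=0$, so $B_{\ell-d}=D_{\ell-d}$ and hence $\calB_{v,\ell-d}=\calD_{v,\ell-d}$, exactly as recorded in the definition of $\calW_{v,\ell}$. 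In particular $\dim_K\calW_{v,\ell}=\#B(v,\ell)+\#B(v,\ell-d)=b_\ell+b_{\ell-d}$.

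Next I would prove the first inequality. The hypothesis $m\neq-1$ in $R$ means $m+1$ is a nonzero element of $R$, hence a unit in the field $K$ (be it $\Q$ or $\F_p$); since Lemma~\ref{lemma:compression} also gives $\lambda_\ell\neq 0$ in $R$, the scalar $(m+1)\lambda_\ell$ is a unit in $K$. By that lemma, $\psi_{v,\ell}\circ\pi_{v,\ell}$ acts on $\calE_{v,\ell}$ as multiplication by $(m+1)\lambda_\ell$, which is injective, so the restriction $\pi_{v,\ell}\colon\calE_{v,\ell}\to\calW_{v,\ell}$ is injective and $\dim_K\calE_{v,\ell}\le\dim_K\calW_{v,\ell}=b_\ell+b_{\ell-d}$. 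For the sharper bound, I would invoke Corollary~\ref{corollary:sum d^n}: if $F$ is nondegenerate then the nonnegative integers $b_j$ with $j\equiv\ell\bmod d$ sum to $d^n$; since $\ell\ge d$, the indices $\ell$ and $\ell-d$ are both nonnegative, congruent mod $d$, and distinct (as $d\ge 2$), so $b_\ell+b_{\ell-d}$ is a partial sum of that series and hence at most $d^n$. Combining gives $\dim_K\calE_{v,\ell}\le d^n$.

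Finally, for the equality case: if $\dim_K\calE_{v,\ell}=\dim_K\calW_{v,\ell}$, then the injective $K$-linear map $\pi_{v,\ell}|_{\calE_{v,\ell}}\colon\calE_{v,\ell}\to\calW_{v,\ell}$ between spaces of the same finite dimension is an isomorphism, which we name $\pi_{v,\ell}^\calE$. Then $\psi_{v,\ell}(\calW_{v,\ell})=\psi_{v,\ell}\bigl(\pi_{v,\ell}(\calE_{v,\ell})\bigr)=(m+1)\lambda_\ell\,\calE_{v,\ell}=\calE_{v,\ell}$, so $\psi_{v,\ell}$ restricts to a surjection $\calW_{v,\ell}\to\calE_{v,\ell}$ of spaces of equal finite dimension, hence an isomorphism $\psi_{v,\ell}^\calE$; one also records $\psi_{v,\ell}^\calE\circ\pi_{v,\ell}^\calE=(m+1)\lambda_\ell\cdot\id_{\calE_{v,\ell}}$. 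I do not expect a real obstacle here: the only substantive ingredient is the Hilbert-series identity of Corollary~\ref{corollary:sum d^n} giving $b_\ell+b_{\ell-d}\le d^n$, and the remainder is a formal consequence of the compression maps of Lemma~\ref{lemma:compression} together with the elementary fact that an injection between finite-dimensional vector spaces of equal dimension is an isomorphism.
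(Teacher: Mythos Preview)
Your proposal is correct and follows essentially the same approach as the paper: both arguments identify $\calW_{v,\ell}$ with the codomain of $\pi_{v,\ell}$ via $B_{\ell-d}=D_{\ell-d}$, use Lemma~\ref{lemma:compression} to see that $\pi_{v,\ell}$ is injective on $\calE_{v,\ell}$, invoke Corollary~\ref{corollary:sum d^n} for the $d^n$ bound, and then handle the equality case by a dimension count. The only cosmetic difference is that in the equality case the paper shows $\psi_{v,\ell}$ is injective with image containing $\calE_{v,\ell}$, whereas you compute the image directly as $\psi_{v,\ell}(\calW_{v,\ell})=\psi_{v,\ell}(\pi_{v,\ell}(\calE_{v,\ell}))=(m+1)\lambda_\ell\,\calE_{v,\ell}=\calE_{v,\ell}$ and conclude surjectivity; these are equivalent.
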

\begin{proof}
  As noted above, the hypothesis $\ell < 2d$ ensures that the codomain of $\pi_{v,\ell}$ and domain of $\psi_{v,\ell}$ are both equal to $\calW_{v,\ell}$.
  Let $\lambda_\ell$ be as in Lemma~\ref{lemma:compression}.
  Since $(m + 1)\lambda_{\ell} \neq 0$ in~$R$, Lemma~\ref{lemma:compression} implies that the map $\pi_{v,\ell}$ is injective when restricted to $\calE_{v,\ell}$ (since scalar multiplication by $(m + 1)\lambda_{\ell}$ is injective), and the first inequality follows.
  The equality in \eqref{equation:upper bound rank Ev} is simply the observation that $\dim_K \calW_{v,\ell} = \#B(v,\ell) + \#B(v,\ell-d) = \#B_\ell + \#B_{\ell-d} = b_\ell + b_{\ell-d}$.
  If $F$ is nondegenerate, then by Corollary~\ref{corollary:sum d^n} we have $b_{\ell} + b_{\ell -d} \leq \sum_{\ell' \equiv \ell \bmod d} b_{\ell'} = d^n$.

  Suppose now that equality holds in \eqref{equation:upper bound rank Ev}, so $\dim_K \calE_{v,\ell} = \dim_K \calW_{v,\ell}$.
  Let $\pi_{v,\ell}^\calE \colon \calE_{v,\ell} \to \calW_{v,\ell}$ be the restriction of $\pi_{v,\ell}$ to $\calE_{v,\ell}$.
  As shown in the previous paragraph, $\pi_{v,\ell}^\calE$ is injective, and
  by comparing dimensions we see that it is an isomorphism onto $\calW_{v,\ell}$.
  Then, since $\psi_{v,\ell} \circ \pi_{v,\ell}^\calE \colon \calE_{v,\ell} \to \calD_{v,\ell}$ is injective
  (by Lemma \ref{lemma:compression}) it follows that $\psi_{v,\ell}$ is injective.
  The image of $\psi_{v,\ell}$ contains $\calE_{v,\ell}$
  (again by Lemma \ref{lemma:compression}), and by comparing dimensions we find that its image is equal to $\calE_{v,\ell}$.
  Restricting the codomain of $\psi_{v,\ell}$ then yields the desired isomorphism $\psi_{v,\ell}^\calE \colon \calW_{v,\ell} \to \calE_{v,\ell}$.
\end{proof}

\begin{corollary}\label{corollary:dim = d^2}
  Let $n = 2$, $\ell \in \{2 d -  2, 2d - 1\}$, and $v \in \Z^{n+1}$ of degree $dm + \ell$.
  Then $\dim_K \calE_{v,\ell} \geq d^2$,
  and if $F$ is nondegenerate and $m \neq -1$ in $R$, then $\dim_K \calE_{v,\ell} = b_{\ell} + b_{\ell -d} = d^2$.
\end{corollary}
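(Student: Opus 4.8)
\emph{Proof proposal.} The lower bound should hold with no hypotheses beyond $n=2$ and $\ell\in\{2d-2,2d-1\}$, and I would obtain it by a bare equation count. By Definition~\ref{definition:calE}, $\calE_{v,\ell}$ is the solution space, inside the $\#D_\ell$-dimensional space $\calD_{v,\ell}$, of the homogeneous linear system~\eqref{equation:GF at u big} in the unknowns $G_u$, $u\in D(v,\ell)$. That system nominally has $(n+1)\#D_{\ell-d}$ equations, but for each fixed $w=v-s$ with $s\in D_{\ell-d}$ one sees, by summing the $n+1$ equations over $i$ and invoking the Euler identity~\eqref{equation:Euler} (the left side collapses via $\sum_i\partial_i(FG)=d(m+1)FG$ and the right side via $\sum_i\partial_i F=dF$, both giving $d(m+1)(FG)_w$, for any value of $m$), that one equation in the block is a consequence of the others. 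Hence the coefficient matrix has rank at most $n\#D_{\ell-d}$, so $\dim_K\calE_{v,\ell}\ge \#D_\ell - n\#D_{\ell-d}$. For $n=2$ we have $\#D_\ell=\binom{\ell+2}{2}$ and $\#D_{\ell-d}=\binom{\ell-d+2}{2}$, and a short computation shows $\binom{\ell+2}{2}-2\binom{\ell-d+2}{2}=d^2$ for both $\ell=2d-2$ and $\ell=2d-1$; this gives $\dim_K\calE_{v,\ell}\ge d^2$.

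For the reverse inequality I would quote Corollary~\ref{corollary:upper bound rank Ev}: since $d\ge 2$, both $\ell=2d-2$ and $\ell=2d-1$ satisfy $d\le\ell<2d$, and $m\ne -1$ in $R$ by hypothesis, so that corollary yields $\dim_K\calE_{v,\ell}\le b_\ell+b_{\ell-d}$. It then remains to check $b_\ell+b_{\ell-d}=d^2$. Since $F$ is nondegenerate, Corollary~\ref{corollary:sum d^n} identifies $H_F(t)=(1+t+\cdots+t^{d-1})^3$, a polynomial of degree $3(d-1)=3d-3$, so $b_{\ell'}=0$ for $\ell'<0$ and for $\ell'>3d-3$. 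For $\ell\in\{2d-2,2d-1\}$ the only exponents congruent to $\ell$ modulo $d$ lying in $\{0,\dots,3d-3\}$ are $\ell-d$ and $\ell$ itself, since the next one, $\ell+d$, equals $3d-2$ or $3d-1$ and is already out of range. Hence the second assertion of Corollary~\ref{corollary:sum d^n} gives $b_\ell+b_{\ell-d}=\sum_{\ell'\equiv\ell\bmod d}b_{\ell'}=d^n=d^2$, and combining with the lower bound finishes the proof.

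I do not expect a real obstacle here: the argument is linear algebra plus the Hilbert-series identity of Corollary~\ref{corollary:sum d^n}. The one point that needs care is the last step, where one must verify that exactly two terms of $H_F$ fall in the relevant residue class mod~$d$; this is what upgrades the upper bound to an equality $d^2$ rather than a strict inequality, and it relies on the specific choices $\ell=2d-2,2d-1$, which push $\ell+d$ just past $\deg H_F$.
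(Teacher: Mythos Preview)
Your proposal is correct and follows essentially the same approach as the paper: an equation count for the lower bound (the paper simply recalls that the system has $n\#D_{\ell-d}$ equations, while you spell out the Euler-identity redundancy), and Corollary~\ref{corollary:upper bound rank Ev} together with Corollary~\ref{corollary:sum d^n} for the upper bound. The only cosmetic difference is that the paper sandwiches $d^2 \le \dim_K \calE_{v,\ell} \le b_\ell + b_{\ell-d} \le d^2$ to force all equalities, whereas you verify $b_\ell + b_{\ell-d} = d^2$ directly by checking that exactly two terms of $H_F$ lie in the relevant residue class; both arguments are equivalent.
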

\begin{proof}
  Recall that $\calE_{v, \ell}$ is defined by a system of $n\#D_{\ell-d}$ equations in $\#D_\ell$ unknowns.
  Its dimension is therefore at least $\#D_\ell - n\#D_{\ell-d} = \binom{\ell + n}{n} - n\binom{\ell-d+n}{n}$,
  which is precisely~$d^2$ for $n = 2$ and $\ell \in \{2 d -  2, 2d - 1\}$, in which case $\dim_K \calE_{v, \ell} \geq d^2$.
  If additionally $F$ is nondegenerate and $m \neq -1$ in $R$, then Corollary~\ref{corollary:upper bound rank Ev} and  Corollary~\ref{corollary:sum d^n} imply that $\dim_K \calE_{v,\ell}\le b_\ell+b_{\ell-d} \le d^2$,
  so we conclude that $\dim_K \calE_{v,\ell} = b_\ell+b_{\ell-d} = d^2$.
\end{proof}

\begin{comment}
We may also write an analogous version for $n=1$.
\begin{corollary}\label{corollary:dim = d^1}
  Let $n = 1$, $\ell \in \{d -  1, d\}$, and $v \in \Z^{n+1}$ of degree $dm + \ell$.
  Then $\dim_K \calE_{v,\ell} \geq d$,
  and if $F$ is nondegenerate and $m \neq -1$ in $R$, then $\dim_K \calE_{v,\ell} = b_{\ell} + b_{\ell -d} = d$.
\end{corollary}
\begin{proof}
  For $\ell = d$, it the argument is identical as in Corollary~\ref{corollary:dim = d^2}.
  For $\ell = d -1$, $\calE_{v,\ell} = \calD_{v,\ell}$, and thus the dimension is $d$.
\end{proof}
\end{comment}

\begin{remark}
Corollaries~\ref{corollary:upper bound rank Ev} and \ref{corollary:dim = d^2} explain why we use $m=p-2$ rather than $m=p-1$ when computing Cartier--Manin matrices: we want $(m+1)\lambda_\ell$ to be nonzero in characteristic~$p$.
\end{remark}

\begin{remark}
  We expect that generalizations of Corollary~\ref{corollary:dim = d^2} for $n > 2$ also hold, that is, $\dim \calE_{v, \ell} = d^n$ for~$F$ nondegenerate and $\ell$ large enough.
  However, a simple dimension count no longer shows that $\pi_{v,\ell}$ is surjective, more is needed.
\end{remark}

\section{Shifting coefficients}\label{sec:shifting}

To simplify the exposition we now specialize to the case $n = 2$.
As in the previous section, $R$ is $\Z$ or $\F_p$, $K$ is its fraction field,
$R[x^\pm]$ is the Laurent polynomial ring in $n+1 = 3$ variables $x_0,x_1,x_2$,
$R[x]$ is the subring $R[x_0, x_1, x_2]$, and we work with a fixed homogeneous polynomial $F\in R[x]_d$ of degree $d>1$ and a positive integer $m$ such that $d(m+1)\ne 0$ in $R$ (we will take $m=p-2$ when $R=\F_p$).
We assume throughout that $F$ is nondegenerate, i.e., that $\Delta_d^*(F) \neq 0$
(see \eqref{eq:discstar} for the definition of $\Delta_d^*(F)$).

Let $e_0, e_1, e_2$ be the standard basis for $\Z^3$.
In this section we consider how to shift a solution to~\eqref{equation:GF at u big} from $D(v, \ell)$ to $D(v + e_i - e_j, \ell)$,
for $\ell=2d-2$ and $v \in \Z^3$ of degree $dm + \ell$,
where $i, j \in \{0, 1, 2\}$ with $i \neq j$.
Our goal is to construct a ``shift'' map
\begin{equation*}
  \tau_{v,i,j} \colon \calD_{v, \ell} \rightarrow \calD_{v + e_i - e_j, \ell},
\end{equation*}
illustrated in the top row of Figure~\ref{fig:solvetriangle}, with two key properties:
\begin{enumerate}
   \item For any $G \in \calD_{v,\ell}$, the coefficients of $G$ and $\tau_{v,i,j}(G)$ should agree on the intersection
   $D(v,\ell) \cap D(v+e_i-e_j,\ell) = D(v-e_j,\ell-1)$, up to multiplication by a known nonzero scalar.
   The region $D(v-e_j,\ell-1)$ is indicated by the dotted lines in Figure~\ref{fig:solvetriangle}.
   \item $\tau_{v,i,j}$ should restrict to a map
   \begin{equation*}
   \tau_{v,i,j}^\calE \colon \calE_{v, \ell} \rightarrow \calE_{v + e_i - e_j, \ell},
   \end{equation*}
   i.e., if $G \in \calD_{v,\ell}$ satisfies the differential equations on $D(v,\ell)$,
   then the shifted polynomial $\tau_{v,i,j}(G)$ satisfies the equations on $D(v+e_i-e_j,\ell)$.
\end{enumerate}
It will be convenient to reformulate the first condition as follows.
For any $\ell' \geq 1$, $w \in \Z^3$ and $k \in \{0,1,2\}$ let
\begin{equation*}
   P_{w,\ell',k} \colon \calD_{w, \ell'} \twoheadrightarrow \calD_{w - e_k, \ell' - 1}
\end{equation*}
denote the restriction map $G\mapsto \restr{G}{D(w - e_k,\ell' - 1)}$ induced by the inclusion $D(w - e_k, \ell' - 1) \subseteq D(w, \ell')$.
Then condition (1) is equivalent to requiring $P_{v+e_i-e_j,\ell,i} \circ \tau_{v,i,j} \colon \calD_{v,\ell} \to \calD_{v-e_j,\ell-1}$
to be a nonzero scalar multiple of $P_{v,\ell,j} \colon \calD_{v,\ell} \to \calD_{v-e_j,\ell-1}$.

\begin{remark}
Later we will apply this framework to $G = \restr{F^m}{D(v,\ell)}$.
It is easy to compute $\restr{F^m}{D(v,\ell)}$ when $v$ is near $d m e_k$,
i.e., at the corners of the simplex.
By repeatedly applying the $\tau_{v,i,j}$ maps,
we may shift this solution to obtain $\restr{F^m}{D(v,\ell)}$ for a given target value of $v$.
For certain carefully chosen $v$, the components of these vectors will in turn yield the entries of the Cartier--Manin matrix
of the smooth plane quartic defined by~$F$, when $d=4$, $\ell=6$ and $m=p-2$.
These shifts are illustrated in Figure~\ref{fig:triangle}.
\end{remark}

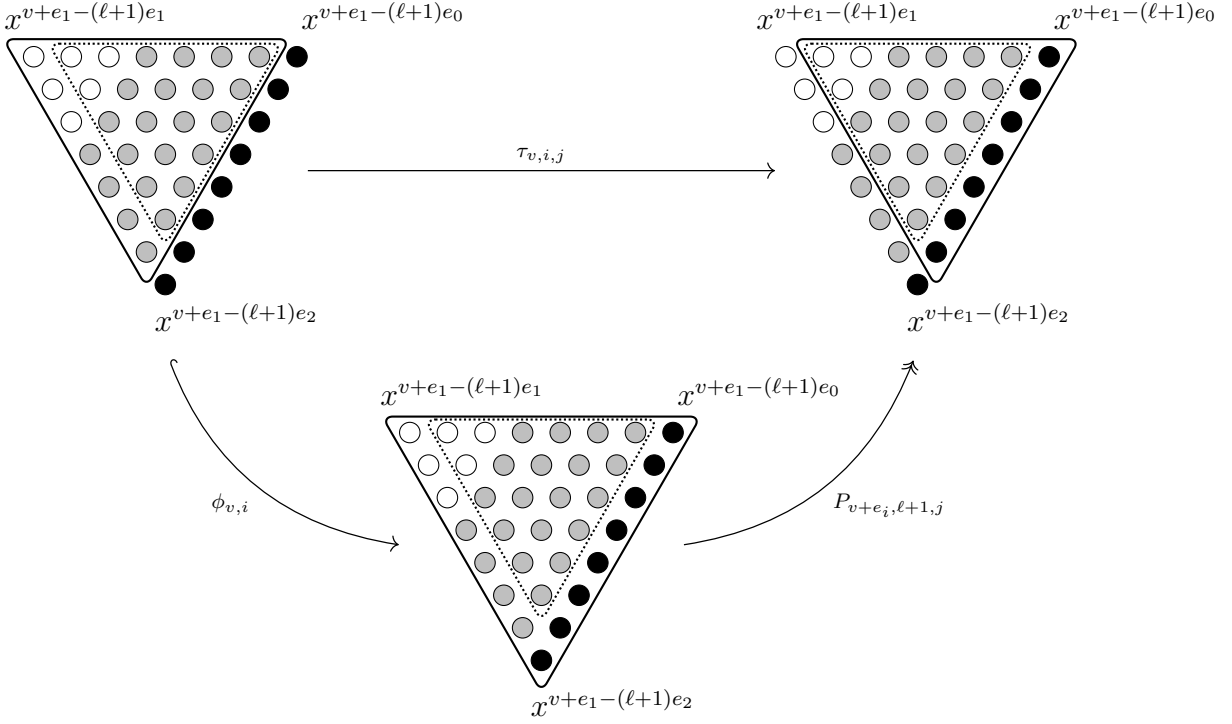
\begin{figure}[htp]
\begin{center}
\begin{tikzpicture}[scale=0.50]
\draw[black,fill=white] (0.000, 6.928) circle (1.5ex);
\draw[black,fill=white] (0.500, 6.062) circle (1.5ex);
\draw[black,fill=white] (1.000, 5.196) circle (1.5ex);
\draw[black,fill=white] (1.500, 6.062) circle (1.5ex);
\draw[black,fill=white] (2.000, 6.928) circle (1.5ex);
\draw[black,fill=white] (1.000, 6.928) circle (1.5ex);
\draw[black,fill=lightgray] (1.500, 4.330) circle (1.5ex);
\draw[black,fill=lightgray] (2.000, 5.196) circle (1.5ex);
\draw[black,fill=lightgray] (5.000, 6.928) circle (1.5ex);
\draw[black,fill=lightgray] (2.500, 6.062) circle (1.5ex);
\draw[black,fill=lightgray] (5.500, 6.062) circle (1.5ex);
\draw[black,fill=lightgray] (3.000, 1.732) circle (1.5ex);
\draw[black,fill=lightgray] (6.000, 6.928) circle (1.5ex);
\draw[black,fill=lightgray] (3.000, 3.464) circle (1.5ex);
\draw[black,fill=lightgray] (3.000, 6.928) circle (1.5ex);
\draw[black,fill=lightgray] (3.500, 2.598) circle (1.5ex);
\draw[black,fill=lightgray] (3.500, 4.330) circle (1.5ex);
\draw[black,fill=lightgray] (2.500, 2.598) circle (1.5ex);
\draw[black,fill=lightgray] (4.000, 3.464) circle (1.5ex);
\draw[black,fill=lightgray] (2.500, 4.330) circle (1.5ex);
\draw[black,fill=lightgray] (4.500, 4.330) circle (1.5ex);
\draw[black,fill=lightgray] (3.000, 5.196) circle (1.5ex);
\draw[black,fill=lightgray] (4.000, 5.196) circle (1.5ex);
\draw[black,fill=lightgray] (2.000, 3.464) circle (1.5ex);
\draw[black,fill=lightgray] (5.000, 5.196) circle (1.5ex);
\draw[black,fill=lightgray] (4.500, 6.062) circle (1.5ex);
\draw[black,fill=lightgray] (4.000, 6.928) circle (1.5ex);
\draw[black,fill=lightgray] (3.500, 6.062) circle (1.5ex);
\draw[black,fill=black] (6.000, 5.196) circle (1.5ex);
\draw[black,fill=black] (4.500, 2.598) circle (1.5ex);
\draw[black,fill=black] (3.500, 0.866) circle (1.5ex);
\draw[black,fill=black] (4.000, 1.732) circle (1.5ex);
\draw[black,fill=black] (7.000, 6.928) circle (1.5ex);
\draw[black,fill=black] (5.000, 3.464) circle (1.5ex);
\draw[black,fill=black] (5.500, 4.330) circle (1.5ex);
\draw[black,fill=black] (6.500, 6.062) circle (1.5ex);
\node at (1.400, 8.028) {$x^{v + e_1 -(\ell + 1)e_1}$};
\node at (9.300, 8.028) {$x^{v  + e_1 -(\ell + 1)e_0}$};
\node at (5.400, 0.000) {$x^{v + e_1 -(\ell + 1)e_2}$};
\node (LR) at (7.00000000000000, 3.89711431702997) {};
\node (LB) at (3.50000000000000, -0.866025403784439) {};
\draw[rounded corners, thick] (-0.810000000000000, 7.39585694831911) -- (6.81000000000000, 7.39585694831911) -- (3.00000000000000, 0.796743371481684) -- cycle;
\draw[rounded corners, densely dotted, thick] (0.400000000000000, 7.27461339178928) -- (6.60000000000000, 7.27461339178928) -- (3.50000000000000, 1.90525588832577) -- cycle;\draw[black,fill=white] (20.000, 6.928) circle (1.5ex);
\draw[black,fill=white] (20.500, 6.062) circle (1.5ex);
\draw[black,fill=white] (21.000, 5.196) circle (1.5ex);
\draw[black,fill=white] (21.500, 6.062) circle (1.5ex);
\draw[black,fill=white] (22.000, 6.928) circle (1.5ex);
\draw[black,fill=white] (21.000, 6.928) circle (1.5ex);
\draw[black,fill=lightgray] (21.500, 4.330) circle (1.5ex);
\draw[black,fill=lightgray] (22.000, 5.196) circle (1.5ex);
\draw[black,fill=lightgray] (25.000, 6.928) circle (1.5ex);
\draw[black,fill=lightgray] (22.500, 6.062) circle (1.5ex);
\draw[black,fill=lightgray] (25.500, 6.062) circle (1.5ex);
\draw[black,fill=lightgray] (23.000, 1.732) circle (1.5ex);
\draw[black,fill=lightgray] (26.000, 6.928) circle (1.5ex);
\draw[black,fill=lightgray] (23.000, 3.464) circle (1.5ex);
\draw[black,fill=lightgray] (23.000, 6.928) circle (1.5ex);
\draw[black,fill=lightgray] (23.500, 2.598) circle (1.5ex);
\draw[black,fill=lightgray] (23.500, 4.330) circle (1.5ex);
\draw[black,fill=lightgray] (22.500, 2.598) circle (1.5ex);
\draw[black,fill=lightgray] (24.000, 3.464) circle (1.5ex);
\draw[black,fill=lightgray] (22.500, 4.330) circle (1.5ex);
\draw[black,fill=lightgray] (24.500, 4.330) circle (1.5ex);
\draw[black,fill=lightgray] (23.000, 5.196) circle (1.5ex);
\draw[black,fill=lightgray] (24.000, 5.196) circle (1.5ex);
\draw[black,fill=lightgray] (22.000, 3.464) circle (1.5ex);
\draw[black,fill=lightgray] (25.000, 5.196) circle (1.5ex);
\draw[black,fill=lightgray] (24.500, 6.062) circle (1.5ex);
\draw[black,fill=lightgray] (24.000, 6.928) circle (1.5ex);
\draw[black,fill=lightgray] (23.500, 6.062) circle (1.5ex);
\draw[black,fill=black] (26.000, 5.196) circle (1.5ex);
\draw[black,fill=black] (24.500, 2.598) circle (1.5ex);
\draw[black,fill=black] (23.500, 0.866) circle (1.5ex);
\draw[black,fill=black] (24.000, 1.732) circle (1.5ex);
\draw[black,fill=black] (27.000, 6.928) circle (1.5ex);
\draw[black,fill=black] (25.000, 3.464) circle (1.5ex);
\draw[black,fill=black] (25.500, 4.330) circle (1.5ex);
\draw[black,fill=black] (26.500, 6.062) circle (1.5ex);
\node at (21.400, 8.028) {$x^{v + e_1 -(\ell + 1)e_1}$};
\node at (29.300, 8.028) {$x^{v  + e_1 -(\ell + 1)e_0}$};
\node at (25.400, 0.000) {$x^{v + e_1 -(\ell + 1)e_2}$};
\node (RL) at (20.0000000000000, 3.89711431702997) {};
\node (RB) at (23.5000000000000, -0.866025403784439) {};
\draw[rounded corners, thick] (20.1900000000000, 7.39585694831911) -- (27.8100000000000, 7.39585694831911) -- (24.0000000000000, 0.796743371481684) -- cycle;
\draw[rounded corners, densely dotted, thick] (20.4000000000000, 7.27461339178928) -- (26.6000000000000, 7.27461339178928) -- (23.5000000000000, 1.90525588832577) -- cycle;\draw[black,fill=white] (10.000, -3.072) circle (1.5ex);
\draw[black,fill=white] (10.500, -3.938) circle (1.5ex);
\draw[black,fill=white] (11.000, -4.804) circle (1.5ex);
\draw[black,fill=white] (11.500, -3.938) circle (1.5ex);
\draw[black,fill=white] (12.000, -3.072) circle (1.5ex);
\draw[black,fill=white] (11.000, -3.072) circle (1.5ex);
\draw[black,fill=lightgray] (11.500, -5.670) circle (1.5ex);
\draw[black,fill=lightgray] (12.000, -4.804) circle (1.5ex);
\draw[black,fill=lightgray] (15.000, -3.072) circle (1.5ex);
\draw[black,fill=lightgray] (12.500, -3.938) circle (1.5ex);
\draw[black,fill=lightgray] (15.500, -3.938) circle (1.5ex);
\draw[black,fill=lightgray] (13.000, -8.268) circle (1.5ex);
\draw[black,fill=lightgray] (16.000, -3.072) circle (1.5ex);
\draw[black,fill=lightgray] (13.000, -6.536) circle (1.5ex);
\draw[black,fill=lightgray] (13.000, -3.072) circle (1.5ex);
\draw[black,fill=lightgray] (13.500, -7.402) circle (1.5ex);
\draw[black,fill=lightgray] (13.500, -5.670) circle (1.5ex);
\draw[black,fill=lightgray] (12.500, -7.402) circle (1.5ex);
\draw[black,fill=lightgray] (14.000, -6.536) circle (1.5ex);
\draw[black,fill=lightgray] (12.500, -5.670) circle (1.5ex);
\draw[black,fill=lightgray] (14.500, -5.670) circle (1.5ex);
\draw[black,fill=lightgray] (13.000, -4.804) circle (1.5ex);
\draw[black,fill=lightgray] (14.000, -4.804) circle (1.5ex);
\draw[black,fill=lightgray] (12.000, -6.536) circle (1.5ex);
\draw[black,fill=lightgray] (15.000, -4.804) circle (1.5ex);
\draw[black,fill=lightgray] (14.500, -3.938) circle (1.5ex);
\draw[black,fill=lightgray] (14.000, -3.072) circle (1.5ex);
\draw[black,fill=lightgray] (13.500, -3.938) circle (1.5ex);
\draw[black,fill=black] (16.000, -4.804) circle (1.5ex);
\draw[black,fill=black] (14.500, -7.402) circle (1.5ex);
\draw[black,fill=black] (13.500, -9.134) circle (1.5ex);
\draw[black,fill=black] (14.000, -8.268) circle (1.5ex);
\draw[black,fill=black] (17.000, -3.072) circle (1.5ex);
\draw[black,fill=black] (15.000, -6.536) circle (1.5ex);
\draw[black,fill=black] (15.500, -5.670) circle (1.5ex);
\draw[black,fill=black] (16.500, -3.938) circle (1.5ex);
\node at (11.400, -1.972) {$x^{v + e_1 -(\ell + 1)e_1}$};
\node at (19.300, -1.972) {$x^{v  + e_1 -(\ell + 1)e_0}$};
\node at (15.400, -10.260) {$x^{v + e_1 -(\ell + 1)e_2}$};
\node (BL) at (10.0000000000000, -6.10288568297003) {};
\node (BR) at (17.0000000000000, -6.10288568297003) {};
\draw[rounded corners, thick] (9.26500000000000, -2.64744432187012) -- (17.7350000000000, -2.64744432187012) -- (13.5000000000000, -9.98267949192431) -- cycle;
\draw[rounded corners, densely dotted, thick] (10.4000000000000, -2.72538660821072) -- (16.6000000000000, -2.72538660821072) -- (13.5000000000000, -8.09474411167423) -- cycle;
\path[commutative diagrams/.cd, every arrow, every label]
(LR) edge node {$\tau_{v,i,j}$} (RL);
\path[commutative diagrams/.cd, every arrow, every label]
   (LB) edge[commutative diagrams/hook, bend right, swap] node {$\phi_{v,i}$} (BL);
\path[commutative diagrams/.cd, every arrow, every label]
   (BR) edge[commutative diagrams/two heads, bend right, swap] node {$P_{v + e_i,\ell+1,j}$} (RB);
\end{tikzpicture}
\end{center}
\caption{Illustration of the maps $\phi_{v,i}$ and $\tau_{v,i,j}$ for $d=4$, $\ell=6$, $i = 1$, $j=0$.
The common domain $\calD(v, \ell)$ of $\tau_{v,i,j}$ and $\phi_{v,i}$ is represented by the white and gray dots enclosed in the upper left triangle (the dots represent a monomial basis).
The codomain $\calD(v+e_i-e_j,\ell)$ of $\tau_{v,i,j}$ is represented by the subset of white, gray, and black dots enclosed in the upper right triangle, and the codomain $\calD(v + e_i, \ell+1)$ of $\phi_{v,i}$ is the entire bottom triangle, which contains both $\calD(v,\ell)$ and $\calD(v+e_i-e_j,\ell)$.
As shown in the proof of Lemma~\ref{lemma:construction phi}, the coordinates in the codomain of $\phi_{v,i}$ represented by the black dots are determined by the coordinates represented by the gray dots.
}
\label{fig:solvetriangle}
\end{figure}

 By composing $\phi_{v,i}$ with the projection $P_{v + e_i,\ell+1,j} \colon \calD_{v + e_i, \ell + 1} \twoheadrightarrow \calD_{v + e_i - e_j, \ell}$ we obtain the desired map $\tau_{v,i,j}$, as shown in the following commutative diagram:
 \begin{equation}
   \label{equation:calD cd}
   \begin{tikzcd}[row sep=1pc,column sep=1pc]
     \calD_{v, \ell}
     \arrow[rr, "\tau_{v,i,j}"]
     \arrow[rd, bend right=15, swap, "\phi_{v,i}"]
     &&
     \calD_{v + e_i - e_j, \ell}\,.
     \\
     &
     \calD_{v + e_i, \ell + 1}
     \arrow[ru, bend right=15, twoheadrightarrow, swap, "P_{v + e_i,\ell+1,j}"]
   \end{tikzcd}
 \end{equation}
 See Figure~\ref{fig:solvetriangle} for an illustration of this diagram in the case $d=4$.

 \begin{equation}
   \begin{tikzcd}[row sep=1pc,column sep=1pc]
     \calE_{v, \ell}
     \arrow[rr, "\tau_{v,i,j}^\calE"]
     \arrow[rd, bend right=15, swap, "\phi_{v,i}^\calE"]
     &&
     \calE_{v + e_i - e_j, \ell}\,.
     \\
     &
     \calE_{v + e_i, \ell + 1}
     \arrow[ru, bend right=15, swap, "P_{v + e_i,j}^\calE"]
   \end{tikzcd}
 \end{equation}

The first step in defining $\tau_{v,i,j}$ is to construct an ``extension'' map
$\phi_{v,i} \colon \calD_{v, \ell} \to \calD_{v + e_i, \ell + 1}$
that extends $G$ from $D(v,\ell)$ to the larger set $D(v+e_i,\ell+1)$.
This is carried out in Lemma~\ref{lemma:construction phi} below.
The idea is to explicitly solve the system~\eqref{equation:GF at u big} for the unknown coefficients of $\phi_{v,i}(G)$,
i.e., for the monomials in $D(v + e_i, \ell + 1) \setminus D(v, \ell)$.
These are shown as the black dots in Figure~\ref{fig:solvetriangle}.

We remind the reader that $n=2$, $d>1$, $\ell = 2d - 2$, $d(m+1)$ is nonzero in $R$,
and $\Delta_d^*(F) \neq 0$.
In particular, $\Delta_d^*(F_{x_i=0}) \neq 0$,
since the latter is a factor of $\Delta_d^*(F)$; see \eqref{eq:discstar}.

\begin{lemma}\label{lemma:construction phi}
  Let $v \in \Z^3$ be of degree $dm + \ell$, let $i \in \{0, 1, 2\}$,
  and let $\theta_i \coloneqq \pm\Delta_d^*(F_{x_i=0}) \neq 0$.
  There exists a $K$-linear map
  \begin{equation*}
    \phi_{v,i} \colon \calD_{v, \ell} \to \calD_{v + e_i, \ell + 1}
  \end{equation*}
  such that $P_{v + e_i,\ell+1,i} \circ \phi_{v,i}  = (v_i + 1) \theta_i \cdot \id_{\calD_{v, \ell}}$,
  and such that if $v_i+1 \neq 0$ in $R$ then
  $\phi_{v,i}(\calE_{v, \ell}) \subseteq \calE_{v + e_i, \ell + 1}$.

  The map $\phi_{v, i}$ may be represented by a $\binom{2d+1}{2} \times \binom{2d}{2}$ matrix whose entries are $R$-linear combinations of~$1, v_0, v_1, v_2$ and $m$, which may be viewed as linear polynomials in~$R[v, m] = R[v_0,v_1,v_2,m]$.
\end{lemma}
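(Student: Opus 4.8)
Set $\{j,k\}=\{0,1,2\}\setminus\{i\}$ and write $\overline F:=F_{x_i=0}\in R[x_j,x_k]_d$, a nondegenerate binary form; then $\theta_i=\pm\Delta_d^*(\overline F)=\pm\overline F_{d,0}\,\overline F_{0,d}\,\disc(\overline F)$ (cf.\ the formula for $\Delta_d^*$ in the case $n=1$ following~\eqref{eq:discstar}), so the extreme coefficients $\overline F_{d,0},\overline F_{0,d}$ are nonzero and $\overline F$ is separable. I first note that $D(v,\ell)\subseteq D(v+e_i,\ell+1)$ and that the $\ell+2=2d$ monomials in the complement are exactly $\{v+e_i-w:w\in D_{\ell+1},\ w_i=0\}$, all of which have $i$-th coordinate $v_i+1$; I record the values of $\phi_{v,i}(G)$ on this ``new layer'' as the coefficients $c_0,\dots,c_{2d-1}$ of a binary form of degree $2d-1$. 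The crucial preliminary observation is that, via $s\mapsto s-e_i$, the equations~\eqref{equation:GF at u big} defining $\calE_{v+e_i,\ell+1}$ that are indexed by an $s$ with $s_i\ge1$ become precisely the equations defining $\calE_{v,\ell}$. I therefore declare $\phi_{v,i}$ to act on $D(v,\ell)$ as multiplication by $(v_i+1)\theta_i$; this already yields $P_{v+e_i,\ell+1,i}\circ\phi_{v,i}=(v_i+1)\theta_i\cdot\id_{\calD_{v,\ell}}$ (as $P_{v+e_i,\ell+1,i}$ is just restriction to $D(v,\ell)$), and for $G\in\calE_{v,\ell}$ it makes the $s_i\ge1$ equations hold automatically for $\phi_{v,i}(G)$, since the latter restricts on $D(v,\ell)$ to a scalar multiple of $G$. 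What remains is to choose $c_0,\dots,c_{2d-1}$ so that the equations indexed by the $d$ multi-indices $s$ with $s_i=0$ also hold.

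For such an $s$ — write $s=p\,e_j+(d-1-p)e_k$ with $p=0,\dots,d-1$ — I write out~\eqref{equation:GF at u big} for $\widetilde G:=\phi_{v,i}(G)$, which for each $s$ is three relations indexed by $i'\in\{0,1,2\}$, and split the sum over $t\in D_d$ according to whether $t_i=0$ (these terms involve only the new layer, through convolution of $(c_r)$ with the coefficients of $\overline F$) or $t_i\ge1$ (these involve $\widetilde G$ on $D(v,\ell)$, i.e.\ $(v_i+1)\theta_i$ times old coefficients). The $i'=i$ equation takes the form $(v_i+1)\bigl[(\overline F\ast c)_p-\rho^{(i)}_p\bigr]=0$, where $(\overline F\ast c)_p$ is a convolution coefficient of $c$ and $\rho^{(i)}_p$ is a combination of the coefficients of $G$ on $D(v,\ell)$ whose coefficients are degree-$\le1$ polynomials in $v,m$ over $R$; moreover every term of $\rho^{(i)}_p$ visibly carries the factor $\theta_i$. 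I then use Euler's identity~\eqref{equation:Euler} to replace the $i'=j$ equation: substituting the relation $(\overline F\ast c)_p=\rho^{(i)}_p$ into it, the (a priori) degree-two terms cancel and what survives is $(m+1)\bigl[((\partial_j\overline F)\ast c)_p-\rho^{(j)}_p\bigr]=0$, with $\rho^{(j)}_p$ again a degree-$\le1$ combination of old coefficients carrying the factor $\theta_i$, and with $m+1$ invertible in $R$ by the standing hypothesis $d(m+1)\ne0$. Thus choosing the new layer amounts to solving, for $p=0,\dots,d-1$, the $2d\times2d$ linear system $(\overline F\ast c)_p=\rho^{(i)}_p$, $((\partial_j\overline F)\ast c)_p=\rho^{(j)}_p$, whose coefficient matrix $N$ has entries in $R$ (the coefficients of $\overline F$ and of $\partial_j\overline F$) and is independent of $v$ and $m$.

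The matrix $N$ is exactly the $(2d)\times(2d)$ Sylvester matrix of the two degree-$d$ binary forms $\overline F$ and $\partial_j\overline F=x_j\,\partial\overline F/\partial x_j$ (the latter has degree $d$ since its leading coefficient is $d\,\overline F_{d,0}\ne0$, using $d\ne0$ in $R$); hence $\det N=\pm\Res(\overline F,\partial_j\overline F)=\pm\Res(\overline F,x_j)\cdot\Res(\overline F,\partial\overline F/\partial x_j)=\pm\overline F_{0,d}\,\overline F_{d,0}\,\disc(\overline F)=\pm\theta_i$, using multiplicativity of the resultant and the resultant--discriminant relation (I will record the signs and normalisations precisely). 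Since $\det N$ is a unit times the common factor $\theta_i$ on the right-hand side, Cramer's rule gives $c=\pm\,\adj(N)\cdot\bigl(\rho^{(i)}/\theta_i,\ \rho^{(j)}/\theta_i\bigr)$ — an $R$-matrix applied to a vector whose entries are degree-$\le1$ polynomials in $v,m$ over $R$ applied to the old coefficients — so each $c_r$ is of this same type, the $\theta_i$'s having cancelled. Assembling, $\phi_{v,i}$ is represented by a matrix with columns indexed by the $\binom{2d}{2}$ monomials of $\calD_{v,\ell}$ and rows indexed by the $\binom{2d}{2}+2d=\binom{2d+1}{2}$ monomials of $\calD_{v+e_i,\ell+1}$: the rows for old monomials carry $(v_i+1)\theta_i$, those for the $2d$ new monomials carry the $c_r$, and all entries are $R$-linear in $1,v_0,v_1,v_2,m$, as required.

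Finally I verify $\phi_{v,i}(\calE_{v,\ell})\subseteq\calE_{v+e_i,\ell+1}$ when $v_i+1\ne0$ in $R$: for $G\in\calE_{v,\ell}$ the equations of $\calE_{v+e_i,\ell+1}$ with $s_i\ge1$ hold for $\phi_{v,i}(G)$ by the first paragraph, the $i'=i$ and $i'=j$ equations with $s_i=0$ hold because $c$ solves the system above, and the $i'=k$ equations with $s_i=0$ then hold by Euler's identity~\eqref{equation:Euler}. The step I expect to be the main obstacle is the middle portion: organising the two families of $s_i=0$ equations — using~\eqref{equation:Euler} and the invertibility of $v_i+1$ and $m+1$ in $R$ — so that the resulting right-hand sides $\rho^{(i)}_p,\rho^{(j)}_p$ are each divisible by $\theta_i$ with a degree-$\le1$ quotient, together with the identification $\det N=\pm\theta_i$. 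It is precisely this cancellation of $\theta_i$'s that forces the formula for $c$, and hence the matrix of $\phi_{v,i}$, to be polynomial rather than merely rational in $v$ and $m$ — in the same spirit as the construction of $\psi_{v,\ell}$ in Lemma~\ref{lemma:compression}.
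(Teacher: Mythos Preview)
Your approach is essentially the same as the paper's.  Both proofs set the old part equal to $(v_i+1)\theta_i\,G$, observe that the $s_i\geq 1$ equations are then automatic, and reduce the remaining $s_i=0$ equations to a $2d\times 2d$ linear system whose coefficient matrix is the Sylvester matrix of $\overline F$ and $\partial_j\overline F$ (your $N$, the paper's $A$) with $\det N=\pm\Delta_d^*(\overline F)=\theta_i$.  Where you ``substitute $(\overline F\ast c)_p=\rho^{(i)}_p$ into the $i'=j$ equation'' and cancel $(m+1)$, the paper takes the linear combination $(v_i+1)\cdot(\text{$j$-equation})-(v_j-s_j)\cdot(\text{$i$-equation})$ and divides by $m+1$; these are the same manipulation, and your $\rho^{(i)}/\theta_i$, $\rho^{(j)}/\theta_i$ are exactly the rows of the paper's matrix $M_{v,m}$.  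Your worry about the ``main obstacle'' is unfounded: the factor $\theta_i$ in the $\rho$'s is there simply because you scaled the old part by $\theta_i$, and once $\det N=\theta_i$ the Cramer solution $c=\adj(N)\cdot(\rho/\theta_i)$ has entries that are $R$-linear in $1,v_0,v_1,v_2,m$, exactly as in the paper's formula $c=\adj(A)M_{v,m}z$.  One small slip: $m+1$ and $v_i+1$ are \emph{nonzero} in $R$, not invertible (for $R=\Z$), but since $R$ is a domain this is all you need for the cancellation.
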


\begin{remark}
  The sign of $\theta_i$ is not canonically determined;
  it depends on choices made during the following proof (such as the choice of $j$ and $k$).
  An explicit formula for $\theta_i$, as the determinant of a certain Sylvester matrix,
  is given in \eqref{equation:theta-defn}.
\end{remark}

\begin{proof}
  We are given as input $G \in \calD_{v,\ell}$, and we wish to extend it to some $\widetilde{G} \in \calD_{v+e_i,\ell+1}$.
  We first set $\widetilde{G}_w \coloneqq G_w$ for $w \in D(v,\ell)$.
  Let
  \begin{equation*}
     S \coloneqq D(v + e_i, \ell + 1) \setminus D(v, \ell).
  \end{equation*}
  Our task is to show how to define the missing coefficients $\widetilde{G}_w$ for $w \in S$ in such a way that $\widetilde{G} \in \calE_{v+e_i,\ell+1}$ whenever $G \in \calE_{v,\ell}$.
  These $2d$ coefficients are indicated by the black dots in Figure~\ref{fig:solvetriangle}.
  We can alternatively write $S$ as
  \begin{equation*}
     S = \big\{ (v + e_i) - ( c e_j + (2d-1-c) e_k ) : 0 \leq c \leq 2d-1 \big\}
  \end{equation*}
  where $j$ and $k$ are chosen so that $\{j,k\} = \{0,1,2\}\setminus \{i\}$.
  
  According to \eqref{equation:GF at u big}, $\widetilde{G}$ lies in $\calE_{v+e_i,\ell+1}$ if and only if
  \begin{equation}
     \label{equation:tildeG}
    ((v + e_i)_h - s_h) \sum_{t \in D_d} F_t \widetilde G_{v+e_i-s-t}
    = (m + 1) \sum_{t \in D_d} t_h F_t \widetilde G_{v+e_i-s-t}
  \end{equation}
  for all $s \in D_{\ell+1-d}$ and $h=0,1,2$.
  Consider the subset of equations in \eqref{equation:tildeG} corresponding to those $s$ with $s_i \geq 1$, i.e., for those $s = s'+e_i$ with $s' \in D_{\ell-d}$:
  \begin{equation*}
    (v_h - s'_h) \sum_{t \in D_d} F_t \widetilde G_{v-s'-t}
    = (m + 1) \sum_{t \in D_d} t_h F_t \widetilde G_{v-s'-t}, \qquad s' \in D_{\ell-d}, \ h = 0,1,2.
  \end{equation*}
  These equations only involve $\widetilde G_w$ for $w \in D(v,\ell)$,
  and in fact are exactly the equations defining $\calE_{v,\ell}$.
  If $G \in \calE_{v,\ell}$, then $\widetilde G$ automatically satisfies these equations,
  since we already arranged that $\widetilde G_w = G_w$ for $w \in D(v,\ell)$.
  The remaining equations correspond to those $s \in D_{\ell+1-d} = D_{d-1}$ for which $s_i = 0$,
  i.e., to $s \in E$ where
  \begin{equation*}
     E \coloneqq \{ a e_j + (d-1-a) e_k : 0 \leq a \leq d-1 \}.
  \end{equation*}
  Consequently, for $\widetilde G$ to lie in $\calE_{v+e_i,\ell+1}$,
  it suffices to choose $\widetilde{G}_w$ for $w \in S$ so that \eqref{equation:tildeG} holds for all $s \in E$ and $h = 0, 1, 2$.
  Moreover, we recall that one value of $h$ is redundant, thanks to the Euler identity \eqref{equation:Euler}.
  Taking $h = i$ and $h = j$, this system of $2|E| = 2d$ equations is given explicitly by
  \begin{equation}
    \label{equation:system-v1}
    \begin{aligned}
    (v_i + 1) \sum_{t \in D_d} F_t \widetilde G_{v+e_i-s-t} & = (m + 1) \sum_{t \in D_d} t_i F_t \widetilde G_{v+e_i-s-t}, \qquad s \in E, \\
    (v_j - s_j) \sum_{t \in D_d} F_t \widetilde G_{v+e_i-s-t} & = (m + 1) \sum_{t \in D_d} t_j F_t \widetilde G_{v+e_i-s-t}, \qquad s \in E.
    \end{aligned}
  \end{equation}

  Let us manipulate these equations to put them into a more useful form.
  For each~$s$, multiply the second equation by $v_i + 1$, subtract $v_j - s_j$ times the first equation,
  and divide by $m+1 \neq 0$, to obtain the system
  \begin{equation}
    \label{equation:system-v2}
    \begin{aligned}
    (v_i + 1) \sum_{t \in D_d} F_t \widetilde G_{v+e_i-s-t} & = (m + 1) \sum_{t \in D_d} t_i F_t \widetilde G_{v+e_i-s-t}, & s \in E, \\
    \sum_{t \in D_d} \big((v_i+1) t_j - (v_j - s_j)t_i \big) F_t \widetilde G_{v+e_i-s-t} & = 0, & s \in E.
    \end{aligned}
  \end{equation}
  The system \eqref{equation:system-v2} is equivalent to \eqref{equation:system-v1}, provided that $v_i + 1 \neq 0$.
  Now we rearrange so that the terms with $t_i = 0$ appear on the left hand side:
  \begin{equation}
    \label{equation:system-v3}
    \begin{aligned}
       (v_i + 1) \sum_{\substack{t \in D_d \\t_i=0}} F_t \widetilde G_{v+e_i-s-t} & = \sum_{\substack{t \in D_d \\t_i\neq0}} \big((m+1) t_i - (v_i + 1)\big) F_t \widetilde G_{v+e_i-s-t}, & s \in E, \\
    (v_i+1) \sum_{\substack{t \in D_d \\t_i=0}} t_j F_t \widetilde G_{v+e_i-s-t} & = \sum_{\substack{t \in D_d \\t_i\neq0}} \big((v_j - s_j)t_i - (v_i+1) t_j \big) F_t \widetilde G_{v+e_i-s-t}, & s \in E.
    \end{aligned}
  \end{equation}
  
  We may rewrite the system \eqref{equation:system-v3} in matrix form as follows.
  \begin{itemize}
    \item
    The coefficients $\widetilde{G}_w$ on the left hand side are exactly the unknowns of interest:
    writing $t = b e_j + (d - b) e_k$ for $0 \leq b \leq d$ and $s = a e_j + (d - 1 - a)e_k$ for $0 \leq a \leq d-1$,
    we see that $w = v+e_i-s-t = (v+e_i) - ce_j - (2d-1-c)e_k \in S$ for $c = a + b$.
    Let $y \in K^{2d}$ represent this vector of unknowns, with $y_c = \widetilde{G}_{v+e_i-ce_j-(2d-1-c)e_k}$
    for $0 \leq c \leq 2d-1$.
    
    \item
    The coefficients $\widetilde{G}_w$ on the right hand side are shown as the gray dots in Figure~\ref{fig:solvetriangle}.
    These coefficients are already known, i.e., all such $w$ lie in $D(v,\ell)$, so that $\widetilde{G}_w = G_w$.
    Indeed, if $t = t' + e_i$ for $t' \in D_{d-1}$, then $w = v+e_i-s-t = v-s-t' \in D(v,(d-1)+(d-1)) = D(v,\ell)$.
    Let $z \in K^{\binom{2d}{2}}$ be the vector consisting of all $G_w$ for $w \in D(v,\ell)$,
    for some convenient ordering of $D(v,\ell)$.

    \item
    Let $\bar F_b \coloneqq F_{b e_j + (d-b) e_k}$ for $0 \leq b \leq d$;
    these are the coefficients $F_t$ appearing on the left hand side of \eqref{equation:system-v3}.
    Let $A$ be the $2d \times 2d$ matrix (over $R$) given by
    \begin{equation*}
      A = \begin{pmatrix}
            \bar F_0 & \bar F_1 & \bar F_2 & \cdots   & \cdots   & \bar F_d & \\
                     & \ddots   &          &          &          &          & \ddots \\
                     &          & \bar F_0 & \bar F_1 & \bar F_2 &  \cdots  & \cdots & \bar F_d \\
            0        & \bar F_1 & 2\bar F_2 & \cdots   & \cdots   & d\bar F_d & \\
                     & \ddots   &          &          &          &          & \ddots \\
                     &          & 0        & \bar F_1 & 2\bar F_2 &  \cdots  & \cdots & d\bar F_d \\
          \end{pmatrix}.
    \end{equation*}
    The columns correspond to the unknowns $y_c$ for $0 \leq c \leq 2d-1$.
    The first group of $d$ rows corresponds to the first equation in \eqref{equation:system-v3},
    and the second group to the second equation.
    The rows in each group are indexed by $a = 0, \ldots, d-1$,
    corresponding to the values of $s \in E$ via $s = a e_j + (d - 1 - a)e_k$.

    \item
    Let $M_{v,m}$ be the $2d \times \binom{2d}{2}$ matrix encoding the linear combinations on the right hand side of \eqref{equation:system-v3}.
    The columns of $M_{v,m}$ correspond to the known values $G_w$ for $w \in D(v,\ell)$,
    and the rows to the $2d$ equations.
    More explicitly, in the first $d$ rows, indexed by $a = 0, \ldots, d-1$,
    we place the value $(m+1)t_i - (v_i+1)$ in the column corresponding to $v + e_i - s - t$
    for each $t = t' + e_i$, $t' \in D_{d-1}$.
    Similarly, in the last $d$ rows, we place the values $(v_j - s_j)t_i - (v_i+1)t_j$ in suitable positions.
    The entries of $M_{v,m}$ may be regarded as linear polynomials in $R[v,m]$.
  \end{itemize}
  With these definitions, the system \eqref{equation:system-v3} may be expressed compactly as
  \begin{equation}
     \label{equation:matrix-system}
     (v_i + 1) A y = M_{v,m} z.
  \end{equation}

  The matrix $A$ is the Sylvester matrix of $F_{x_i=0, x_k=1}$ and  $(\partial_j F)_{x_i=0, x_k=1}$ as degree~$d$ polynomials in~$x_j$.
  By Proposition 1.8 in \cite[p.\,435]{GKZ94} we have
  \begin{equation*}
      \det A = \pm F_{d e_j} F_{d e_k}\! \disc_{x_j} F_{x_i=0, x_k=1} = \pm \Delta_d^* \left( F_{x_i = 0} \right) \neq 0.
  \end{equation*}
  We may therefore solve the system explicitly as follows.
  Define
  \begin{equation}
    \label{equation:theta-defn}
    \theta_i \coloneqq \det A,
  \end{equation}
  and let $\adj(A)\in R^{2d\times 2d}$ denote the matrix satisfying $\adj(A)A=(\det A)I$.
  Multiplying \eqref{equation:matrix-system} by $\adj(A)$ on the left yields the solution
  \begin{equation*}
    (v_i+1) \theta_i y = \adj(A) M_{v,m} z.
  \end{equation*}
  Note that the columns of $\adj(A) M_{v,m}$ correspond to monomials $u \in D(v,\ell)$,
  and the rows correspond to monomials $w \in S \subseteq D(v+e_i,\ell+1)$,
  i.e., the $c$-th row corresponds to $w = v+e_i-ce_j-(2d-1-c)e_k$ for $0 \leq c \leq 2d-1$.
 
  Finally we show how to define the matrix for the desired map $\phi_{v,i} \colon \calD_{v,\ell} \to \calD_{v+e_i,\ell+1}$.
  For $w \in D(v+e_i,\ell+1)$ and $u \in D(v,\ell)$, the matrix entry $(\phi_{v,i})_{w,u}$ is given by
  \begin{equation}\label{eq:phivi}
    (\phi_{v,i})_{w,u} = \begin{cases}
      (v_i + 1) \theta_i \delta_{w,u}, & \text{if } w \in D(v,\ell), \\
      (\adj(A) M_{m,v})_{w,u},         & \text{if } w \notin D(v,\ell),
      \end{cases}
  \end{equation}
  where $\delta_{w,u}$ if $w=u$ and $0$ otherwise.
\end{proof}

\begin{remark}
  One may attempt to apply the construction in the proof of Lemma \ref{lemma:construction phi}
  for values of $\ell$ other than $2d - 2$.
  This leads to a system of $2(\ell - d + 2)$ equations in $\ell + 2$ unknowns.
  Ultimately, the reason we work with $\ell = 2d-2$ is that this is the smallest
  value of $\ell$ for which there are at least as many equations as unknowns.
\end{remark}

\iffalse
\begin{remark}
 In Lemma~\ref{lemma:construction phi} we only used nondegeneracy at the hyperplane $x_i = 0$;
 the lemma still holds if we weaken the assumption $\Delta_d^*(F)\ne 0$ to $\Delta_d(F_{x_i=0})\ne 0$.
   \david{I think this remark can be removed. The more specific assumption is actually in the statement of the lemma.}
\end{remark}
\fi

\begin{remark}
  As observed in Lemma~\ref{lemma:calE inclusions} the equations defining $\calE_{v,\ell}$ are a subset of the equations defining $\calE_{v + e_i, \ell + 1}$.
  In the setup of Lemma~\ref{lemma:construction phi} this difference of equations has size~$2d$.

  The condition $\Delta_d(F_{x_i=0}) \neq  0$ ensures that these $2d$ equations are linearly independent.
  Furthermore, if $v_i + 1 \neq 0$, then given $G \in \calE_{v, \ell}$ there is a unique $\widetilde{G} \in \calE_{v + e_i, \ell + 1}$ such that $\restr{\widetilde{G}}{D(v,\ell)} = (v_i + 1) \theta_i G$.
  Thus when $v_i + 1 \neq 0$, we have $\phi_{v,i}(\calE_{v,\ell}) = \calE_{v + e_1, \ell + 1}$.
\end{remark}

For the remainder of this section we fix distinct $i,j\in \{0,1,2\}$.
By composing the map $\phi_{v,i} \colon \calD_{v, \ell} \to \calD_{v + e_i, \ell + 1}$ with the projection $P_{v + e_i,j} \colon \calD_{v + e_i, \ell + 1} \twoheadrightarrow \calD_{v + e_i - e_j, \ell}$ we obtain the map
\begin{equation}
\tau_{v,i,j} \coloneqq P_{v+e_i,j}\circ \phi_{v,i} \colon \calD_{v, \ell} \to \calD_{v + e_i - e_j, \ell},
\end{equation}
and the diagram~\eqref{equation:calD cd} as desired.
We now check that $\tau_{v,i,j}$ has the desired properties.  In particular, if $G \in \calE_{v, \ell}$, then $\tau_{v,i,j}(G) \in \calE_{v + e_i - e_j, \ell}$, meaning that $\tau_{v,i,j}(G)$ satisfies the equations on a shifted set of monomials.

\begin{corollary}\label{corollary:taumap}
  We have $\tau_{v,i,j}(\calE_{v, \ell}) \subseteq \calE_{v + e_i -e_j, \ell}$ and
  the composition
  \begin{equation*}
    \begin{tikzcd}[row sep=1pc,column sep=3.5pc]
    \calD_{v - e_j, \ell - 1}
    \arrow[r, hookrightarrow]
    &
    \calD_{v, \ell}
    \arrow[r, "\tau_{v,i,j}"]
    &
    \calD_{v + e_i - e_j ,\ell}
    \arrow[r, twoheadrightarrow, "P_{v + e_i,j}"]
    &
    \calD_{v - e_j, \ell - 1}
  \end{tikzcd}
  \end{equation*}
  is scalar multiplication by $(v_i + 1) \theta_i$, and $\tau_{v,i,j}$ is invertible when $v_i+1\ne 0$ in $R$.

  The map $\tau_{v, i, j}$ may be represented by a $\binom{2d}{2} \times \binom{2d}{2}$ matrix whose entries are $R$-linear combinations of~$1, v_0, v_1, v_2$ corresponding to linear polynomials in~$R[v]$.
\end{corollary}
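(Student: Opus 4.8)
The plan is to read off every assertion from the construction $\tau_{v,i,j}=P_{v+e_i,j}\circ\phi_{v,i}$ and the diagram~\eqref{equation:calD cd}, using Lemma~\ref{lemma:construction phi} for the properties of $\phi_{v,i}$ and Lemma~\ref{lemma:calE inclusions} for the projection. Since $n=2$ and $\ell=2d-2$, both $\calD_{v,\ell}$ and $\calD_{v+e_i-e_j,\ell}$ have dimension $\#D_\ell=\binom{\ell+2}{2}=\binom{2d}{2}$, so $\tau_{v,i,j}$ is a square $\binom{2d}{2}\times\binom{2d}{2}$ matrix; with source and target bases indexed by $D_\ell$, its rows are the rows of $\phi_{v,i}$ indexed by $D(v+e_i-e_j,\ell)\subseteq D(v+e_i,\ell+1)$, which splits as $D(v-e_j,\ell-1)\sqcup\bigl(D(v+e_i-e_j,\ell)\setminus D(v,\ell)\bigr)$ with the second set of size $2d-1$.

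For the scalar-multiplication statement, the composite $\calD_{v-e_j,\ell-1}\hookrightarrow\calD_{v,\ell}\xrightarrow{\tau_{v,i,j}}\calD_{v+e_i-e_j,\ell}\xrightarrow{P_{v+e_i,j}}\calD_{v-e_j,\ell-1}$ applies $\phi_{v,i}$ and then restricts successively to $D(v+e_i-e_j,\ell)$ and to $D(v-e_j,\ell-1)$; since $D(v-e_j,\ell-1)=D(v,\ell)\cap D(v+e_i-e_j,\ell)\subseteq D(v,\ell)$, this is the same as applying $\phi_{v,i}$, restricting to $D(v,\ell)$, and restricting again, and by Lemma~\ref{lemma:construction phi} the restriction of $\phi_{v,i}$ to $D(v,\ell)$ is $(v_i+1)\theta_i\,\id$; hence the composite is $(v_i+1)\theta_i\,\id_{\calD_{v-e_j,\ell-1}}$. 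In particular, on the $\binom{2d-1}{2}$ rows of $\tau_{v,i,j}$ indexed by $D(v-e_j,\ell-1)$ the entries are $0$ or $(v_i+1)\theta_i$, and the single nonzero entry in each such row sits in a column attached to $D(v-e_j,\ell-1)$, so in a suitable ordering of rows and columns the matrix is block lower-triangular with $\det\tau_{v,i,j}=\pm((v_i+1)\theta_i)^{\binom{2d-1}{2}}\det B$ for the complementary $(2d-1)\times(2d-1)$ block $B$. Since $\theta_i\neq0$ always and $\phi_{v,i}$ is injective when $v_i+1\neq0$ (Lemma~\ref{lemma:construction phi}), it then remains only to see that $B$ is invertible for such $v$ — which one reads off from the Cramer's-rule solution $(v_i+1)\theta_i y=\adj(A)M_{v,m}z$ of Lemma~\ref{lemma:construction phi}, since $B$ computes the $2d-1$ new coefficients of $\phi_{v,i}(G)$ from the relevant edge coefficients of $G$ — and then $\tau_{v,i,j}$ is an isomorphism exactly as claimed.

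The inclusion $\tau_{v,i,j}(\calE_{v,\ell})\subseteq\calE_{v+e_i-e_j,\ell}$ follows, when $v_i+1\neq0$, by composing $\phi_{v,i}(\calE_{v,\ell})\subseteq\calE_{v+e_i,\ell+1}$ (Lemma~\ref{lemma:construction phi}) with $P_{v+e_i,j}(\calE_{v+e_i,\ell+1})\subseteq\calE_{v+e_i-e_j,\ell}$, the latter being Lemma~\ref{lemma:calE inclusions} for $D(v+e_i-e_j,\ell)\subseteq D(v+e_i,\ell+1)$ (the degree hypotheses $\ell\ge d$ and $\ell+1\ge d$ hold since $d\ge2$). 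To remove the hypothesis $v_i+1\neq0$, I would appeal to the last assertion: once $\tau_{v,i,j}$ and the linear equations defining $\calE_{v+e_i-e_j,\ell}$ are represented by matrices with entries polynomial in $v$ over $R$, the inclusion is a polynomial identity in $v$ which, viewed with $v_0,v_1,v_2$ as indeterminates, holds on the Zariski-dense locus $\{v_i+1\neq0\}$, hence identically, and then specializes to every $v\in\Z^3$ and reduces modulo $p$.

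The crux, and the step I expect to demand the most care, is that $\tau_{v,i,j}$ can be represented by a matrix whose entries are $R$-linear forms in $1,v_0,v_1,v_2$ and involve no $m$. By Lemma~\ref{lemma:construction phi} the matrix of $\phi_{v,i}$ is linear in $R[v,m]$ with $m$ occurring only with a constant coefficient; on the $D(v-e_j,\ell-1)$ rows of $\tau_{v,i,j}$ the entries are $0$ or $(v_i+1)\theta_i$ and so are already $m$-free, while on the remaining $2d-1$ rows the entries are the corresponding rows of $\adj(A)M_{v,m}$ from the proof of Lemma~\ref{lemma:construction phi}, where $m$ enters only through the factor $(m+1)$ in the first block of rows of $M_{v,m}$. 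I would then use the degree relation $v_0+v_1+v_2=dm+\ell$, which holds for every $v$ in the domain of $\tau_{v,i,j}$, to replace $d(m+1)$ throughout by the linear form $v_0+v_1+v_2-\ell+d$; this turns each occurrence of $m$ into a $v$-linear expression after division by $d$, and what remains is to check that the $m$-coefficient in each of those $2d-1$ rows of $\adj(A)M_{v,m}$ is divisible by $d$ — automatic over $\F_p$ since $d$ is invertible there, and over $\Z$ a divisibility forced by the shape of the Sylvester matrix $A$ and of $M_{v,m}$ — so that the substitution yields genuine $R$-linear forms in $1,v_0,v_1,v_2$. This computation also supplies the polynomiality of $\tau_{v,i,j}$ used in the previous paragraph.
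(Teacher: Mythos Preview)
Your approach is essentially the paper's own: use $\tau_{v,i,j}=P_{v+e_i,j}\circ\phi_{v,i}$, invoke Lemma~\ref{lemma:construction phi} for the properties of $\phi_{v,i}$, and Lemma~\ref{lemma:calE inclusions} for the projection. The scalar-multiplication statement and the $\calE$-inclusion are handled correctly, and your Zariski-density argument to remove the hypothesis $v_i+1\neq0$ for the inclusion is more careful than the paper, which does not comment on that case.

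There is, however, a genuine gap in your invertibility argument. You correctly identify the block-triangular structure and reduce to invertibility of the $(2d-1)\times(2d-1)$ block $B$, but the sentence ``which one reads off from the Cramer's-rule solution \ldots\ since $B$ computes the $2d-1$ new coefficients of $\phi_{v,i}(G)$ from the relevant edge coefficients'' is not a proof that $\det B\neq0$. The Cramer's-rule identity $(v_i+1)\theta_i\,y=\adj(A)M_{v,m}z$ expresses the new coefficients in terms of \emph{all} of $z$, not just the $2d-1$ ``edge'' columns complementary to $D(v-e_j,\ell-1)$; restricting to those columns is exactly what $B$ does, and nothing in Lemma~\ref{lemma:construction phi} guarantees that this restriction is nonsingular. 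You would need an independent argument (e.g., exhibiting a left inverse, or a direct determinant computation using the Sylvester structure of $A$). The paper's own proof is equally terse on this point, so it is not clear whether the intended claim is invertibility of $\tau_{v,i,j}$ on all of $\calD_{v,\ell}$, or merely invertibility of the displayed composition (which is immediate from the scalar-multiplication statement once $v_i+1\neq0$), or invertibility of $\tau_{v,i,j}^\calE$ on $\calE_{v,\ell}$ (which is what is actually used downstream).

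Your treatment of the last assertion is more thorough than the paper's. You correctly observe that Lemma~\ref{lemma:construction phi} only yields entries in $R[v,m]$, and that passing to $R[v]$ requires eliminating $m$ via $d(m+1)=v_0+v_1+v_2-\ell+d$. Over $\F_p$ this is immediate since $d$ is invertible; over $\Z$ the divisibility-by-$d$ check you flag is a real issue. The paper simply cites Lemma~\ref{lemma:construction phi} without comment, and indeed in the proof of Corollary~\ref{corollary:calT^s} reverts to describing the entries as lying in $R[v,m]$, so the distinction is not pressed there either.
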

\begin{proof}
  The first part follows by the definition of $\tau_{v,i,j}$ combined with Lemmas~\ref{lemma:construction phi} and \ref{lemma:calE inclusions}.
  The last part also follows from Lemma~\ref{lemma:construction phi}, where we note that $\#D(v,\ell)=\#D(v+e_i-e_j,\ell)=\#D_\ell=\binom{\ell+n}{n} = \binom{2d}{2}$ for $n=2$ and $\ell=2d-2$.
\end{proof}

Let $\phi^\calE_{v,i} \colon \calE_{v, \ell} \to \calE_{v + e_i, \ell + 1}$ be the restriction of $\phi_{v,i}\colon \calD_{v, \ell} \to \calD_{v + e_i, \ell + 1}$ and similarly define $\tau_{v,i,j}^\calE$ and $P_{v,i}^\calE$.
Because we have assumed that $F$ is nondegenerate and $m+1\ne 0$ in $R$, applying Corollary~\ref{corollary:dim = d^2} with $\ell=2d-2$ and $\ell+1=2d-1$ yields
\begin{equation}\label{eq:dimd2}
\dim_K \calW_\ell = \dim_K \calE_{v,\ell}=\dim_K \calE_{v+e_i,\ell+1} = \dim_K \calE_{v+e_i-e_j,\ell} = d^2.
\end{equation}

Since $\dim_K \calE_{v,\ell}=\dim_K \calW_\ell$, by \eqref{eq:dimd2}, Corollary~\ref{corollary:upper bound rank Ev} gives us bijections
\begin{equation}
    \pi_{v,\ell}^\calE \colon \calE_{v, \ell} \to \calW_{v, \ell}, \qquad
    \psi_{v,\ell}^\calE \colon \calW_{v, \ell} \to \calE_{v, \ell},
\end{equation}
which are the restrictions of $\pi_{v, \ell}$ and $\psi_{v, \ell}$, respectively.
We now consider the map
\begin{equation}\label{eq:Tvij}
  T_{v,i,j} \coloneqq \pi_{v + e_i - e_j, \ell}^\calE \circ \tau_{v, i, j}^\calE \circ \psi_{v, \ell}^\calE
  \colon \calW_{v, \ell} \longrightarrow \calW_{v + e_i - e_j, \ell}.
\end{equation}
In other words, the map  $T_{v,i,j}$ re-expresses the shifting map $\tau_{v,i,j}^\calE$ in terms of a basis for $\calW_{v,\ell}$.
We are interested in applying chains of such maps $T_{v + \bullet,i,j}$, thus for any $s>0$ we define
\begin{equation}\label{eq:Tvijs}
  T_{v,i,j}^s \coloneqq \prod_{s > k \ge 0} T_{v + k (e_i - e_j),i,j} = T_{v + (s-1) e_i - (s-1) e_j,i,j} \circ \cdots \circ T_{v + e_i - e_j,i,j} \circ  T_{v,i,j},
\end{equation}
where the product is taken over decreasing values of $k$ starting from $s-1$; note that the symbol $s$ in $T_{v,i,j}^s$ is a superscript, not an exponent.

\iffalse
\begin{lemma}\label{lemma:calT}
  The map
  $T_{v + e_i - e_j, j, i} \circ T_{v,i,j} \colon \calW_{v, \ell} \to \calW_{v, \ell}$ is scalar multiplication by
  \[
  (m + 1)^2 \lambda_{\ell}^2(v_i + 1)v_j \theta_i \theta_j,
  \]
  and thus $T^s _{v, i, j}$ is invertible if $\prod_{s > k \geq 0} (v_i + 1 + k) (v_j - k) \neq 0$ in R.
\end{lemma}
\begin{proof}
  Applying Lemma~\ref{lemma:compression} and Corollary~\ref{corollary:tau is invertible} to the expansion of $T_{v + e_i - e_j, j, i} \circ T_{v,i,j}$ yields
  \begin{equation}
    \begin{aligned}
      T_{v + e_i - e_j, j, i} \circ T_{v,i,j}
    &= \pi_{v,\ell}^\calE \circ \tau_{v + e_i - e_j, j, i}^\calE \circ \psi_{v + e_i - e_j, \ell}^\calE
    \circ \pi_{v + e_i - e_j, \ell}^\calE \circ \tau_{v, i, j}^\calE \circ \psi_{v, \ell}^\calE\\
    &=  (m + 1) \lambda_{\ell}\, \pi_{v,\ell}^\calE \circ \tau_{v + e_i - e_j, j, i}^\calE \circ \tau_{v, i, j}^\calE \circ \psi_{v, \ell}^\calE\\
    &=  (m + 1) \lambda_{\ell}(v_i + 1)  v_j \theta_i \theta_j \, \pi_{v,\ell}^\calE \circ \psi_{v, \ell}^\calE\\
    &=  (m + 1)^2 \lambda_{\ell}^2 (v_i + 1)v_j \theta_i\theta_j.
    \end{aligned}
  \end{equation}
In the last line we use the bijectivity of $\pi_{v,\ell}^\calE$ and $\psi_{v, \ell}^\calE$ and $\psi_{v,\ell}^\calE \circ \pi_{v, \ell}^\calE = (m + 1) \lambda_\ell\id_{\calE_{v,\ell}}$ to deduce $\pi_{v, \ell}^\calE\circ \psi_{v,\ell}^\calE  = (m + 1) \lambda_\ell\id_{\calW_{v,\ell}}$.
\end{proof}
\fi

\begin{corollary}\label{corollary:calT^s}
    Let $s$ be a positive integer. We have
    \begin{equation*}
       T^s_{v,i,j} = (m + 1)^{s-1} \lambda_\ell^{s-1}
       \pi_{v + s e_i -  s e_j, \ell}^\calE \circ
      \left( \prod_{s > k \ge 0} \tau_{v + k( e_i -  e_j), i, j}^\calE \right) \circ \psi_{v, \ell}^\calE.
    \end{equation*}
    Furthermore, $\pi_{v + s e_i -  s e_j, \ell}^\calE \circ \left(\prod_{s > k \ge 0} \tau_{v + k( e_i -  e_j), i, j}^\calE \right) \circ \psi_{v, \ell}^\calE$
    may be represented by $d^2\times d^2$ matrix whose entries are polynomials in $R[v,m] = R[v_0, v_1, v_2, m]$ of degree at most $s+1$.

\end{corollary}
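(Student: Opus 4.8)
The plan is to prove the displayed identity by induction on $s$ and then read off the matrix representation and degree bound from the product that results. The one ingredient needed beyond the definitions is that both composites $\psi_{v,\ell}^\calE\circ\pi_{v,\ell}^\calE$ and $\pi_{v,\ell}^\calE\circ\psi_{v,\ell}^\calE$ equal multiplication by the scalar $(m+1)\lambda_\ell$, on $\calE_{v,\ell}$ and on $\calW_{v,\ell}$ respectively. The first is immediate from Lemma~\ref{lemma:compression}: since $\ell = 2d-2 < 2d$ the domain of $\psi_{v,\ell}$ is already $\calW_{v,\ell}$, so the restriction of $\psi_{v,\ell}$ there is $\psi_{v,\ell}^\calE$, and Lemma~\ref{lemma:compression} says $\psi_{v,\ell}\circ\pi_{v,\ell}$ acts as $(m+1)\lambda_\ell$ on $\calE_{v,\ell}$. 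The second follows formally: by Corollary~\ref{corollary:upper bound rank Ev} and \eqref{eq:dimd2} the maps $\pi_{v,\ell}^\calE\colon\calE_{v,\ell}\to\calW_{v,\ell}$ and $\psi_{v,\ell}^\calE\colon\calW_{v,\ell}\to\calE_{v,\ell}$ are isomorphisms of $d^2$-dimensional spaces, $(m+1)\lambda_\ell\ne 0$ in $R$ (since $d(m+1)\ne 0$ and $\lambda_\ell\ne 0$), and from $\psi_{v,\ell}^\calE\circ\pi_{v,\ell}^\calE = (m+1)\lambda_\ell\,\id_{\calE_{v,\ell}}$ one gets $\pi_{v,\ell}^\calE\circ\psi_{v,\ell}^\calE = (m+1)\lambda_\ell\,\id_{\calW_{v,\ell}}$ by conjugating through the isomorphism $\pi_{v,\ell}^\calE$.

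Write $v_k \coloneqq v + k(e_i - e_j)$, so $\deg v_k = dm+\ell$ for all $k$; since $F$ is nondegenerate and $m\ne -1$ in $R$, Corollary~\ref{corollary:dim = d^2} gives $\dim_K\calE_{v_k,\ell} = \dim_K\calW_{v_k,\ell} = d^2$, so $\pi_{v_k,\ell}^\calE$, $\psi_{v_k,\ell}^\calE$, and $\tau_{v_k,i,j}^\calE$ are all defined, the last mapping $\calE_{v_k,\ell}$ into $\calE_{v_{k+1},\ell}$ by Corollary~\ref{corollary:taumap}. The case $s=1$ is the definition $T_{v,i,j} = \pi_{v_1,\ell}^\calE\circ\tau_{v_0,i,j}^\calE\circ\psi_{v_0,\ell}^\calE$, which matches the claimed formula with $(m+1)^0\lambda_\ell^0 = 1$. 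For the inductive step, use $T^{s+1}_{v,i,j} = T_{v_s,i,j}\circ T^s_{v,i,j}$ (the $k = s$ factor is appended on the left), insert the inductive formula for $T^s_{v,i,j}$ and the definition $T_{v_s,i,j} = \pi_{v_{s+1},\ell}^\calE\circ\tau_{v_s,i,j}^\calE\circ\psi_{v_s,\ell}^\calE$, and observe that the two meet at $\psi_{v_s,\ell}^\calE\circ\pi_{v_s,\ell}^\calE = (m+1)\lambda_\ell\,\id_{\calE_{v_s,\ell}}$; this scalar combines with $(m+1)^{s-1}\lambda_\ell^{s-1}$ to give $(m+1)^s\lambda_\ell^s$, leaving $\pi_{v_{s+1},\ell}^\calE\circ\bigl(\prod_{s+1>k\ge 0}\tau_{v_k,i,j}^\calE\bigr)\circ\psi_{v_0,\ell}^\calE$, the desired formula for $s+1$.

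For the matrix representation, write the composite $\pi_{v+se_i-se_j,\ell}^\calE\circ\bigl(\prod_{s>k\ge 0}\tau_{v_k,i,j}^\calE\bigr)\circ\psi_{v,\ell}^\calE$ in the monomial bases: $\psi_{v,\ell}^\calE$ has domain $\calW_{v,\ell}$ and agrees with $\psi_{v,\ell}\colon\calW_{v,\ell}\to\calD_{v,\ell}$ but with image $\calE_{v,\ell}$; each $\tau_{v_k,i,j}^\calE$ agrees on $\calE_{v_k,\ell}$ with $\tau_{v_k,i,j}\colon\calD_{v_k,\ell}\to\calD_{v_{k+1},\ell}$; and $\pi_{v+se_i-se_j,\ell}^\calE$ agrees on $\calE_{v+se_i-se_j,\ell}$ with $\pi_{v+se_i-se_j,\ell}$. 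Hence the matrix of the composite, with respect to monomial bases of $\calW_{v,\ell}$ and $\calW_{v+se_i-se_j,\ell}$, is the ordinary product of the matrices of $\pi_{v+se_i-se_j,\ell}$, of $\tau_{v_{s-1},i,j},\dots,\tau_{v_0,i,j}$, and of $\psi_{v,\ell}$. By Lemma~\ref{lemma:compression} the $\pi$-matrix has entries in $R$ and the $\psi$-matrix has entries of degree at most $1$ in $R[v,m]$; by Corollary~\ref{corollary:taumap} each $\tau_{v_k,i,j}$-matrix has entries of degree at most $1$ in $R[v_k]$, hence of degree at most $1$ in $R[v]$ since $v_k$ is an integer translate of $v$. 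A product of $s$ matrices of degree at most $1$ has entries of degree at most $s$, and multiplying on the left by the degree-$0$ $\pi$-matrix and on the right by the degree-$\le 1$ $\psi$-matrix gives entries of degree at most $s+1$ in $R[v,m]$; the matrix is $d^2\times d^2$ because it represents a map $\calW_{v,\ell}\to\calW_{v+se_i-se_j,\ell}$ with both spaces of dimension $d^2$ by \eqref{eq:dimd2}.

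The only genuine point to get right is the identity $\pi_{v,\ell}^\calE\circ\psi_{v,\ell}^\calE = (m+1)\lambda_\ell\,\id$; the rest is bookkeeping, namely keeping the order of the composite product straight, observing that the translates $v_k$ do not change degrees, and checking that all the $\calE$-level maps are defined, which is where nondegeneracy of $F$ and $m\ne -1$ enter through Corollary~\ref{corollary:dim = d^2}.
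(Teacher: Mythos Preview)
Your proof is correct and follows essentially the same approach as the paper's: the paper expands the product $T^s_{v,i,j}$ directly and collapses each inner pair $\psi_{v_k,\ell}^\calE\circ\pi_{v_k,\ell}^\calE$ to the scalar $(m+1)\lambda_\ell$ via Lemma~\ref{lemma:compression}, whereas you package the same telescoping as an induction on $s$; the degree count on the matrix product is likewise the same. One small remark: you establish both $\psi^\calE\circ\pi^\calE=(m+1)\lambda_\ell\,\id$ and $\pi^\calE\circ\psi^\calE=(m+1)\lambda_\ell\,\id$, but only the first is used in your argument (and in the paper's), so the second is superfluous here.
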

\begin{proof}
  Lemma~\ref{lemma:compression} implies $\psi_{v + k (e_i - e_j),\ell}^\calE \circ \pi_{v + k (e_i - e_j), \ell}^\calE = (m + 1) \lambda_\ell\id_{\calE_{v+k(e_i-e_j),\ell}}$ for $0\le k < s$.  Applying this repeatedly yields
  \begin{equation}\label{equation:calT^s}
    \begin{aligned}
      T^s_{v,i,j}
      &\coloneqq \prod_{s > k \ge 0} T_{v + k (e_i -  e_j),i,j}\\
      &= \prod_{s > k\ge 0} \pi_{v + (k + 1) (e_i -  e_j), \ell}^\calE \circ \tau_{v +  k(e_i -  e_j), i, j}^\calE \circ \psi_{v +  k(e_i -  e_j), \ell}^\calE
      \\
      &= \pi_{v + s (e_i - e_j), \ell}^\calE \circ
      \left( \prod_{s > k > 0} \tau_{v + k (e_i -  e_j), i, j}^\calE \circ \psi_{v + k (e_i -  e_j), \ell}^\calE\circ \pi_{v + k( e_i -  e_j), \ell}^\calE \right)
      \circ \tau_{v, i, j}^\calE \circ \psi_{v, \ell}^\calE
      \\
      &= (m + 1)^{s-1} \, \lambda_\ell^{s-1} \pi_{v + s e_i -  s e_j, \ell}^\calE \circ
      \left(\prod_{s > k \ge 0} \tau_{v + k( e_i -  e_j), i, j}^\calE \right)
      \circ \psi_{v, \ell}^\calE.
    \end{aligned}
  \end{equation}
 Lemma~\ref{lemma:compression}, Corollary~\ref{corollary:upper bound rank Ev}, and Corollary~\ref{corollary:taumap} imply that the RHS can be represented as the product of a scalar, a $d^2\times \binom{2d}{2}$ matrix, $s-1$ different $\binom{2d}{2}\times \binom{2d}{2}$ matrices, and a $\binom{2d}{2}\times d^2$ matrix, all of whose entries are linear polynomials in $R[v,m]$.
The corollary follows.
\end{proof}

Corollary~\ref{corollary:calT^s} combined with Lemma~\ref{lemma:compression} yields the following corollary.

\begin{corollary}\label{corollary:calT to sol}
   Let $s\in \Z_{\ge 0}$ and let $G\in R[x]_{dm}$ satisfy equation~\eqref{equation:GF}.
   Then,
  \vspace{-8pt}
  \[
    T^s_{v,i,j} \circ \pi_{v, \ell}^\calE \left( \restr{G}{D(v,\ell)} \right)
    =
    \theta_i^{s} \lambda_\ell^s  (m + 1)^s \left(\prod_{k=1}^{s} (v_i + k)\right)  \pi_{v + s(e_i - e_j), \ell}^\calE \left( \restr{G}{D(v + s(e_i - e_j),\ell)} \right).
  \]
\end{corollary}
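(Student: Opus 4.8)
The plan is to combine the factorization of $T^s_{v,i,j}$ from Corollary~\ref{corollary:calT^s} with an explicit description of how the shift maps act on restrictions of one global solution. Throughout, the standing hypotheses of this section hold ($F$ nondegenerate and $d(m+1)\ne 0$ in $R$), so by Corollary~\ref{corollary:dim = d^2} every space $\calE_{w,\ell}$, $\calE_{w,\ell+1}$ arising below has dimension $d^2$ and the maps $\pi^\calE$, $\psi^\calE$, $\phi^\calE$, $\tau^\calE$ are all defined. Since $G\in R[x]_{dm}$ satisfies~\eqref{equation:GF}, for every $w$ of degree $dm+\ell$ the polynomial $\restr{G}{D(w,\ell)}$ satisfies the defining equations~\eqref{equation:GF at u big} of $\calE_{w,\ell}$ --- they are a subset of the equations~\eqref{equation:GF at u} obtained from~\eqref{equation:GF} --- so $\restr{G}{D(w,\ell)}\in\calE_{w,\ell}$, and likewise $\restr{G}{D(w,\ell+1)}\in\calE_{w,\ell+1}$. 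Finally, Lemma~\ref{lemma:compression} together with Corollary~\ref{corollary:upper bound rank Ev} gives $\psi^\calE_{w,\ell}\circ\pi^\calE_{w,\ell}=(m+1)\lambda_\ell\,\id_{\calE_{w,\ell}}$.

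The heart of the argument is the identity
\begin{equation*}
\tau^\calE_{v,i,j}\bigl(\restr{G}{D(v,\ell)}\bigr)=(v_i+1)\,\theta_i\,\restr{G}{D(v+e_i-e_j,\ell)}
\end{equation*}
for every $v$ of degree $dm+\ell$. Since $\tau_{v,i,j}=P_{v+e_i,j}\circ\phi_{v,i}$ and $P_{v+e_i,j}$ merely restricts coefficients, it suffices to prove $\phi_{v,i}\bigl(\restr{G}{D(v,\ell)}\bigr)=(v_i+1)\theta_i\,\restr{G}{D(v+e_i,\ell+1)}$. On $D(v,\ell)$ this is immediate from~\eqref{eq:phivi}, where $\phi_{v,i}$ acts as $(v_i+1)\theta_i\,\id$. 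For $w\in S\coloneqq D(v+e_i,\ell+1)\setminus D(v,\ell)$, equation~\eqref{eq:phivi} gives $\phi_{v,i}\bigl(\restr{G}{D(v,\ell)}\bigr)_w=(\adj(A)M_{v,m}z)_w$, where $z$ is the vector of coefficients $G_u$, $u\in D(v,\ell)$, and $A$, $M_{v,m}$ are the matrices built in the proof of Lemma~\ref{lemma:construction phi}. Because $\restr{G}{D(v+e_i,\ell+1)}\in\calE_{v+e_i,\ell+1}$, the vector $y^\circ$ of its coefficients $G_w$ on $S$ solves $(v_i+1)Ay^\circ=M_{v,m}z$ --- this is precisely~\eqref{equation:matrix-system} --- and left-multiplying by $\adj(A)$ with $\adj(A)A=\theta_i I$ yields $(v_i+1)\theta_i\,y^\circ=\adj(A)M_{v,m}z$. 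Hence $\phi_{v,i}\bigl(\restr{G}{D(v,\ell)}\bigr)_w=(v_i+1)\theta_i\,G_w$ for $w\in S$, proving the identity. I expect this to be the main obstacle: one must re-enter the matrix construction inside the proof of Lemma~\ref{lemma:construction phi} and observe that $\restr{G}{D(v+e_i,\ell+1)}$ is itself a solution of~\eqref{equation:matrix-system}, which forces $\phi_{v,i}$ to return exactly $(v_i+1)\theta_i$ times it; since no division occurs, the identity is valid in $R$ even when $v_i+1=0$, in which case both sides vanish.

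It remains to assemble the statement. For $s=0$ both sides equal $\pi^\calE_{v,\ell}\bigl(\restr{G}{D(v,\ell)}\bigr)$, as all products are empty. For $s\ge1$, apply Corollary~\ref{corollary:calT^s}:
\begin{equation*}
T^s_{v,i,j}=(m+1)^{s-1}\lambda_\ell^{s-1}\,\pi^\calE_{v+se_i-se_j,\ell}\circ\Bigl(\prod_{s>k\ge0}\tau^\calE_{v+k(e_i-e_j),i,j}\Bigr)\circ\psi^\calE_{v,\ell}.
\end{equation*}
Evaluate at $\pi^\calE_{v,\ell}\bigl(\restr{G}{D(v,\ell)}\bigr)$: first $\psi^\calE_{v,\ell}$ produces $(m+1)\lambda_\ell\,\restr{G}{D(v,\ell)}$; then the chain of shift maps applies the identity above $s$ times, with $v$ replaced by $v+k(e_i-e_j)$ for $k=0,1,\dots,s-1$, contributing the factor $\theta_i^{s}\prod_{k=0}^{s-1}(v_i+k+1)=\theta_i^{s}\prod_{k=1}^{s}(v_i+k)$ and landing on $\restr{G}{D(v+s(e_i-e_j),\ell)}$; finally $\pi^\calE_{v+se_i-se_j,\ell}$ compresses the result. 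Collecting all scalars --- the prefactor $(m+1)^{s-1}\lambda_\ell^{s-1}$, the $(m+1)\lambda_\ell$ from $\psi^\calE_{v,\ell}$, and $\theta_i^{s}\prod_{k=1}^{s}(v_i+k)$ --- gives $\theta_i^s\lambda_\ell^s(m+1)^s\prod_{k=1}^{s}(v_i+k)$ times $\pi^\calE_{v+se_i-se_j,\ell}\bigl(\restr{G}{D(v+s(e_i-e_j),\ell)}\bigr)$, which is exactly the asserted identity.
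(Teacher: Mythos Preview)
Your proof is correct and follows the route the paper indicates (``Corollary~\ref{corollary:calT^s} combined with Lemma~\ref{lemma:compression}''), but you spell out the one step the paper leaves implicit: the identity $\tau^\calE_{v,i,j}\bigl(\restr{G}{D(v,\ell)}\bigr)=(v_i+1)\theta_i\,\restr{G}{D(v+e_i-e_j,\ell)}$, which you establish by revisiting the matrix equation~\eqref{equation:matrix-system} from the proof of Lemma~\ref{lemma:construction phi} and observing that the restriction of a global solution $G$ already satisfies it. This is exactly the missing ingredient, and your treatment of the degenerate case $v_i+1=0$ (where both sides vanish without any division) is a nice touch that the paper's one-line justification does not address.
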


\noindent
Before stating the final result of this section, we remind the reader of our running assumptions:
\begin{itemize}
\item $i,j \in \{0,1,2\}$ distinct;
\item $R=\Z$ or $\F_p$, $n=2$, $d>1$, $\ell=2d-2$, $m>0$, and $d(m+1)\ne 0$ in $R$;
\item $F\in R[x]_d$ is nondegenerate, meaning $\Delta_d^*(F)\ne 0$ (see \eqref{eq:discstar} for the definition of $\Delta_d^*$).
\end{itemize}

\begin{theorem}\label{theorem:calT mod p}
  Let $p$ be a prime that is equal to the characteristic of $R$ when $R=\F_p$ and does not divide $\Delta_d^*(F)d(m+1)$ when $R=\Z$. Let~$s$ be a positive integer, and let $G\in R[x]_{dm}$ satisfy equation~\eqref{equation:GF}.
  The following hold:
  \begin{enumerate}
    \item[\textup{(a)}]
       If $w \in \Z^{n+1}$ of degree $dm + \ell$ and $v \equiv w \bmod p$ then the matrices representing  $T^{s}_{v,i,j}$ and $T^{s}_{w,i,j}$ agree modulo $p$.
    \item[\textup{(b)}]
      If $v_i \equiv 0 \bmod p$ and $s = p-1$, then $(m + 1)^s \lambda_\ell^s \theta_i^s  \prod_{k=1}^{s} (v_i + k) \equiv -1 \bmod p$ and
      \[
        T^{p-1}_{v,i,j} \circ \pi_{v, \ell}^\calE \left( \restr{G}{D(v,\ell)} \right) \equiv  -  \pi_{v + (p-1)(e_i - e_j), \ell}^\calE \left( \restr{G}{D(v + (p-1)(e_i - e_j),\ell)} \right) \bmod p.
      \]
      When $v_j \equiv -1 \bmod p$ also holds, the matrix $T^{p-1}_{v,i,j}$ is invertible modulo $p$ and its inverse is $T^{p-1}_{v + (p-1)(e_i - e_j),j,i}$.
  \end{enumerate}
\end{theorem}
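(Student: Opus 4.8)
The plan is to reduce both statements to the structural results already proved, together with the theorems of Wilson and Fermat.

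\textbf{Part (a).} By Corollary~\ref{corollary:calT^s} the matrix of $T^s_{v,i,j}$ is $(m+1)^{s-1}\lambda_\ell^{s-1}$ times a matrix whose entries are fixed polynomials in $R[v_0,v_1,v_2,m]$, the same for every $v$ of degree $dm+\ell$; here $\lambda_\ell,\theta_i\in R$ do not depend on $v$, and since $\deg v=\deg w=dm+\ell$ the value of $m$ coincides for $v$ and $w$. So every entry of the matrix of $T^s_{v,i,j}$ is the value at $(v_0,v_1,v_2)$ of a fixed polynomial over $R$, and $v\equiv w\bmod p$ forces the matrices of $T^s_{v,i,j}$ and $T^s_{w,i,j}$ to agree modulo $p$.

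\textbf{Part (b), the scalar and the shift formula.} With $v_i\equiv0\bmod p$ and $s=p-1$, Wilson gives $\prod_{k=1}^{p-1}(v_i+k)\equiv(p-1)!\equiv-1\bmod p$. Also $p\nmid m+1$ (by hypothesis when $R=\Z$; because $d(m+1)\neq0$ in $R$ when $R=\F_p$), $p\nmid\theta_i$ (since $\theta_i=\pm\Delta_d^*(F_{x_i=0})$ divides $\Delta_d^*(F)\neq0\bmod p$), and $p\nmid\lambda_\ell$ (immediate for $R=\F_p$; for $R=\Z$ one may take $\lambda_\ell$ to divide a power of $\Delta_d^*(F)$). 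Hence Fermat gives $(m+1)^{p-1}\equiv\theta_i^{p-1}\equiv\lambda_\ell^{p-1}\equiv1\bmod p$, so $(m+1)^s\lambda_\ell^s\theta_i^s\prod_{k=1}^{s}(v_i+k)\equiv-1\bmod p$. The displayed congruence for $T^{p-1}_{v,i,j}\circ\pi_{v,\ell}^\calE(\restr{G}{D(v,\ell)})$ is then Corollary~\ref{corollary:calT to sol} with $s=p-1$, reduced modulo $p$.

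\textbf{Part (b), invertibility.} Put $v'\coloneqq v+(p-1)(e_i-e_j)$, so $v'_i\equiv-1$ and $v'_j\equiv0\bmod p$. I would first prove the single-step reversal identity
\[
T_{w+e_i-e_j,j,i}\circ T_{w,i,j}=(m+1)^2\lambda_\ell^2(w_i+1)w_j\,\theta_i\theta_j\cdot\id_{\calW_{w,\ell}}
\]
for $w$ of degree $dm+\ell$, by expanding both factors via~\eqref{eq:Tvij}, using $\psi_{w+e_i-e_j,\ell}^\calE\circ\pi_{w+e_i-e_j,\ell}^\calE=(m+1)\lambda_\ell\,\id$ and $\pi_{w,\ell}^\calE\circ\psi_{w,\ell}^\calE=(m+1)\lambda_\ell\,\id$ (Lemma~\ref{lemma:compression}, Corollary~\ref{corollary:upper bound rank Ev}, and~\eqref{eq:dimd2}) to collapse the middle, and reducing to $\tau_{w+e_i-e_j,j,i}^\calE\circ\tau_{w,i,j}^\calE=(w_i+1)w_j\theta_i\theta_j\,\id_{\calE_{w,\ell}}$; this last follows from $\tau_{w,i,j}^\calE=P_{w+e_i,j}^\calE\circ\phi_{w,i}^\calE$ together with $P_{w+e_i,i}^\calE\circ\phi_{w,i}^\calE=(w_i+1)\theta_i\,\id$ and $P_{w+e_i,j}^\calE\circ\phi_{w+e_i-e_j,j}^\calE=w_j\theta_j\,\id$ (Lemma~\ref{lemma:construction phi}), since $\phi_{w,i}^\calE$ and $\phi_{w+e_i-e_j,j}^\calE$ are isomorphisms whenever $w_i+1\neq0$, $w_j\neq0$ (the Remark after Lemma~\ref{lemma:construction phi} and~\eqref{eq:dimd2}), and both sides are polynomial in $w$ for fixed $m$. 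Telescoping $T^{p-1}_{v',j,i}\circ T^{p-1}_{v,i,j}$ by repeatedly collapsing the innermost pair---which at stage $k$ is $T_{w+e_i-e_j,j,i}\circ T_{w,i,j}$ with $w=v+k(e_i-e_j)$, $k=p-2,\ldots,0$, where $w_i+1\equiv k+1$ and $w_j\equiv-1-k$ are both units mod $p$---gives
\[
T^{p-1}_{v',j,i}\circ T^{p-1}_{v,i,j}=\bigl((m+1)^2\lambda_\ell^2\theta_i\theta_j\bigr)^{p-1}\prod_{k=0}^{p-2}(v_i+k+1)(v_j-k)\cdot\id_{\calW_{v,\ell}}.
\]
By Fermat the $(p-1)$st power is $\equiv1$, and by Wilson each of $\prod_{k=0}^{p-2}(v_i+k+1)$ and $\prod_{k=0}^{p-2}(v_j-k)$ is $\equiv(p-1)!\equiv-1\bmod p$ (using $v_i\equiv0$, $v_j\equiv-1$), so the scalar is $\equiv1$. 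The same computation with $(v,i,j)$ and $(v',j,i)$ exchanged (valid since $v'_j\equiv0$ and $v'+(p-1)(e_j-e_i)=v$) gives $T^{p-1}_{v,i,j}\circ T^{p-1}_{v',j,i}\equiv\id_{\calW_{v',\ell}}$, so $T^{p-1}_{v,i,j}$ is invertible modulo $p$ with inverse $T^{p-1}_{v',j,i}$.

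\textbf{Main obstacle.} The delicate part is the reversal identity and its telescoping: one must check that $\phi_{w,i}^\calE$ and $\phi_{w+e_i-e_j,j}^\calE$ (hence the relevant $P^\calE$'s and $\tau^\calE$'s) are genuine isomorphisms under the stated nonvanishing conditions, carry this through the composites defining the $T^{p-1}$'s, and verify that the base points arising in the chain keep every accumulated scalar a unit modulo $p$, so that the reversal step applies at each stage. Once that bookkeeping is in place the rest is just Wilson and Fermat.
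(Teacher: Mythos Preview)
Your treatment of (a) and of the scalar and shift-formula claims in (b) matches the paper's: both rest on the polynomial dependence of $T^s_{v,i,j}$ on $v$ (Corollary~\ref{corollary:calT^s}), on Wilson and Fermat, and on Corollary~\ref{corollary:calT to sol}.

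For the invertibility claim in (b) you take a genuinely different route. The paper simply applies the already-established shift formula a second time: writing $v' = v+(p-1)(e_i-e_j)$, the hypothesis $v_j\equiv-1\bmod p$ gives $v'_j\equiv 0\bmod p$, so the displayed congruence with $(v',j,i)$ in place of $(v,i,j)$ reads
\[
T^{p-1}_{v',j,i}\circ\pi^\calE_{v',\ell}\bigl(\restr{G}{D(v',\ell)}\bigr) \equiv -\pi^\calE_{v,\ell}\bigl(\restr{G}{D(v,\ell)}\bigr)\bmod p,
\]
and composing with the first application yields $T^{p-1}_{v',j,i}\circ T^{p-1}_{v,i,j}\circ\pi^\calE_{v,\ell}(\restr{G}{D(v,\ell)})\equiv \pi^\calE_{v,\ell}(\restr{G}{D(v,\ell)})$. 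This is much shorter than your telescoping argument. On the other hand, as written it only checks the composite on vectors coming from global solutions $G$ of~\eqref{equation:GF}, which span a one-dimensional subspace of $\calW_{v,\ell}$; so as a proof that the \emph{matrix} is invertible it leaves something to be said. Your approach---proving the single-step reversal $T_{w+e_i-e_j,j,i}\circ T_{w,i,j}=(m+1)^2\lambda_\ell^2(w_i+1)w_j\theta_i\theta_j\,\id_{\calW_{w,\ell}}$ as a genuine operator identity and then telescoping---avoids this issue entirely and yields an honest matrix-level statement. The cost is the extra work you flag in your ``main obstacle'' paragraph; your sketch of that step (swapping $P^\calE$ and $\phi^\calE$ via bijectivity, then collapsing) is correct.

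One small point: your claim that $p\nmid\lambda_\ell$ when $R=\Z$ because ``one may take $\lambda_\ell$ to divide a power of $\Delta_d^*(F)$'' is not obviously justified by the construction in Lemma~\ref{lemma:compression}. The paper does not address this in its proof either; in the application (\S\ref{sec:algorithms}) it simply excludes primes dividing $\lambda_\ell$ by folding $\lambda_\ell$ into the modulus $D$.
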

\begin{proof}
  For (a) note that $T^{s}_{v,i,j}$ is representable as a matrix with entries in $R[v]$.
  For (b) we apply Fermat's little theorem and Wilson's theorem to obtain $\prod_{k=1}^{p-1} (v_i + k) \equiv (p-1)! \equiv -1 \bmod p$, which together with Corollary \ref{corollary:calT to sol} implies the first claim.
  For the second claim in (b), we apply $T^{p-1}_{v + (p-1)(e_i - e_j),j,i}$ to both sides of the first claim to obtain
  \begin{equation*}
    \begin{aligned}
       T^{p-1}_{v + (p-1)(e_i - e_j),j,i} \circ\  T^{p-1}_{v,i,j}\ \circ\ &\pi_{v, \ell}^\calE\! \left( \restr{G}{D(v,\ell)} \right)\\
       &\equiv - T^{p-1}_{v + (p-1)(e_i - e_j),j,i}\! \circ \pi_{v + (p-1)(e_i - e_j),\ell}^\calE\! \left( \restr{G}{D(v + (p-1)(e_i - e_j),\ell)} \right) \bmod p \\
       &\equiv \pi_{v,\ell}^\calE\left(\restr{G}{D(v,\ell)}\right) \bmod p,
    \end{aligned}
  \end{equation*}
  where the last equivalence follows from the first claim in (b), since $v_j\equiv -1\bmod p$ implies $(v + (p-1)(e_i - e_j))_j \equiv 0 \bmod p$, allowing us to apply the first claim to $T^{p-1}_{v + (p-1)(e_i - e_j),j,i}$.
\end{proof}

\section{Computing Cartier--Manin matrices of a smooth plane quartic}\label{sec:algorithms}

Let $X\colon f(x_0,x_1,x_2)=0$ be a smooth plane quartic defined by a nondegenerate homogeneous quartic $f\in R[x_0,x_1,x_2]_4$.  In this section we give algorithms to compute the Cartier--Manin matrix $A_p$ of $X$ when $R=\F_p$, or the Cartier--Manin matrices $A_p$ of the reductions of $X$ modulo primes $p\le N$ of good reduction up to a given bound $N$ when $R=\Z$.

We first consider the case $R=\F_p$, where $p$ is an odd prime, noting that for $p=2$ the Cartier--Manin matrix can be extracted directly from the coefficients of $f=f^{p-1}$ via \eqref{equation:cartier--main spq}.
We will apply the infrastructure developed in \S\ref{sec:shifting} with $F = f$ and $m = p -2$.
In particular, we work with $\ell = 6 = 2 d - 2$ and $dm + \ell = 4 (p-2) + 6 = 4 p -2$ throughout.

\begin{figure}
\begin{center}
\begin{tikzpicture}[scale=0.4]
\input{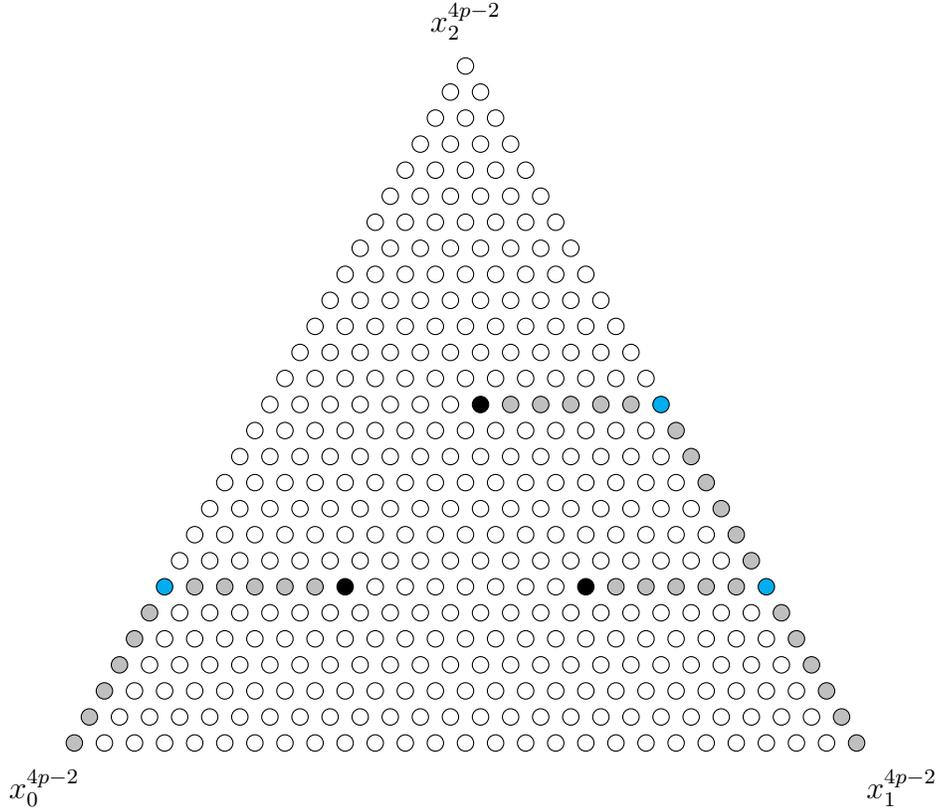}
\end{tikzpicture}
\end{center}
\caption{Illustration for $p=7$. The target points $v$ in the interior are shown in black with $v^{(1)}$ at the top center, $v^{(2)}$ at the lower left, and $v^{(3)}$ at the lower right. The intermediate points $w$ are in blue, and the paths used to reach the target points $v$ are shown in gray.}\label{fig:triangle}
\end{figure}

Let us first sketch our algorithm by working backwards from our goal.
The coefficients of $f^{p-1}$ that appear in the $i$th column of the matrix $A_p$ in \eqref{equation:cartier--main spq} lie in
$\restr{f^{p-1}}{D(v^{(i)}, 2)}$ for
\begin{equation}
  v^{(1)} \coloneqq (p-1, p, 2p-1),\quad v^{(2)}\coloneqq (2p, p-1, p-1),\quad v^{(3)}\coloneqq  (p-1, 2p, p-1);
 \end{equation}
note that the $v^{(i)}$ are not symmetric because the indices in the columns of \eqref{equation:cartier--main spq} are not.
Now $D_{v, 2} = B_{v,2}$, since $2<4= d$, so $\pi_{v,6}$ has codomain $\calW_{v,6}$ and it suffices to compute
\begin{equation}\label{equation:we only need pi}
  \pi_{v,6}\bigl(\restr{f^{p-2}}{D(v, 6)}\bigr) = \restr{f^{p-2}}{B(v, 6)} + \restr{f^{p-1}}{B(v, 2)} \in \calW_{v,6}
\end{equation}
for $v=v^{(1)},v^{(2)},v^{(3)}$.  We now define
\begin{equation}
 w^{(1)}\coloneqq (0,2p-1,2p-1),\quad w^{(2)}\coloneqq (3p-1, 0, p-1),\quad w^{(3)}\coloneqq (0,3p-1,p-1),
\end{equation}
with $w^{(1)}=v^{(1)}+(p-1)(e_1-e_0)$, $w^{(2)}=v^{(2)}+(p-1)(e_0-e_1)$, and $w^{(3)}=v^{(3)}+(p-1)(e_1-e_0)$.
Let $C_p \in \F_p^{16\times 16}$ denote the matrix representing the linear operator
\begin{equation}
    T_{w^{(1)},0,1}^{p-1} \colon \calW_{w^{(1)},6} \to \calW_{v^{(1)}, 6},
\end{equation}
determined by the nondegenerate polynomial $f\in \F_p[x_0,x_1,x_2]_4$.
By Theorem~\ref{theorem:calT mod p} (a), the matrix $C_p$ also represents
\begin{equation}
  T_{w^{(3)},0,1}^{p-1} \colon \calW_{w^{(3)},6} \to \calW_{v^{(3)}, 6},
\end{equation}
since $v^{(1)}\equiv v^{(3)}\bmod p$ and $w^{(1)}\equiv w^{(3)}\bmod p$, and by Theorem~\ref{theorem:calT mod p} (b), $C_p^{-1}$ represents
\begin{equation}
    \left(T_{v^{(2)},0,1}^{p-1}\right)^{-1} \equiv T_{w^{(2)},1,0}^{p-1} \colon \calW_{w^{(2)},6} \to \calW_{v^{(2)},6}
\end{equation}
since $v^{(2)}_0\equiv 0\bmod p$ and $v^{(2)}_1\equiv -1\bmod p$ and $w^{(2)}=v^{(2)}+(p-1)(e_0-e_1)$.
We can thus use the matrix $C_p$ and its inverse to traverse the three paths from the intermediate points $w$ depicted as blue dots on the exterior of triangle in Figure~\ref{fig:triangle} to the target interior points $v$.

To obtain the coefficients of $\restr{f^{p-2}}{D(w, 6)}$ for $w=w^{(1)},w^{(2)},w^{(3)}$ we could apply a variation of the method of \S\ref{sec:shifting} for $n = 1$ (each $w$ has a zero entry we can ignore), but we prefer to use a simpler approach that we illustrate for $w=w^{(3)}$.
Let $h(t)\coloneqq f(0,1,t)$.  Then
\begin{equation}\label{equation:F^{p-eps} mod p}
  h^{p-2}(t) \equiv h(t^p) h^{-2}(t).
\end{equation}
If we put $g(t)\coloneqq h(t)^2=\sum_{i=0}^8a_it^i$ and let
\begin{equation}
    a_0/g(t) = \sum_{i\geq 0} c_i t^i \in \F_p[[t]],
\end{equation}
then we can compute $(c_{s}, c_{s-1}, \cdots, c_{s-7})$ as the first column of
$Q_g^s$, where
\begin{equation}
  a_0 Q_g \coloneqq
  \begin{bmatrix}
    -a_1 & -a_2 &  -a_3 & \cdots & -a_8\\
    a_0  & 0 & 0 & \cdots & 0\\
    0 & a_0  & 0 & \cdots & 0\\
    \vdots &  & \ddots  & & \vdots \\
    0 & \cdots  &  0 & a_0 & 0
  \end{bmatrix}
  ,
\end{equation}
Computing $Q_g^s$ with $s=p-1$ allows us to derive the $\binom{6+1}{1}=7$ coefficients of $\restr{f^{p-2}}{D(w, 6)}$ we need using $c_s,\ldots,c_{s-6}$; the other $\binom{6+2}{2}-\binom{6+1}{1}=21$ coefficients correspond to monomials in $\F_p[x^{\pm}]$ that contain  a negative exponent and are necessarily zero because $f^{p-2}$ is a polynomial.  In terms of Figure~\ref{fig:triangle}, the computation we have just described corresponds to walking $p-1$ steps along the gray path from the lower right corner of the triangle to the first blue dot on the right edge (the 21 zero coefficients correspond to monomials outside the triangle).

The cases $w=w^{(1)},w^{(2)}$ are treated similarly using suitable $g(t)$ and $s$.

\begin{algorithm}\label{algorithm:cartier manin}

\noindent
Given a nondegenerate $f \in \F_p[x_0,x_1,x_2]_4$ and the corresponding matrix $C_p \in \F_p^{16\times 16}$, compute the Cartier--Manin matrix of $X\colon f(x_0,x_1,x_2)=0$ as follows:

\begin{enumerate}
\setlength{\itemsep}{8pt}
\item Compute $\restr{f^{p-2}}{D(w,6)}$ for $w=w^{(1)},w^{(2)},w^{(3)}$ (the blue dots in Figure~\ref{fig:triangle}) using suitably chosen $g\in \F_p[t]$ and $Q_g^s\in \F_p^{8\times 8}$ as described above:
\smallskip

\begin{enumerate}
\item Compute the edge coefficients of $\restr{f^{p-2}}{D(w^{(1)}, 6)}$ using $g(t) \coloneqq f(0,1,t)^2$:
\[
\bigl(f^{p-2}_{w^{(1)}-je_2-(6-j)e_1}\bigr)_{0\le j \le 7} = \left(f_{(0,3,1)} {f_{(0,4,0)}}^{-2} Q_g ^{p-1}  + {f_{(0,4,0)}}^{-1} Q_g ^{2 p-1} \right)\cdot (1, 0, \ldots, 0)^T.
\]

\item Compute the edge coefficients of $\restr{f^{p-2}}{D(w^{(2)}, 6)}$ using $g(t) \coloneqq f(1,0,t)^2$:
\[
\bigl(f^{p-2}_{w^{(2)}-je_2-(6-j)e_0)}\bigr)_{0\le j\le 7} = {f_{(4,0,0)}}^{-1} Q_g ^{p-1} \cdot (1, 0, \ldots, 0)^T.
\]

\item Compute the edge coefficients of $\restr{f^{p-2}}{D(w^{(3)}, 6)}$ using $g(t) \coloneqq f(0,1,t)^2$:
\[
\bigl(f^{p-2}_{w^{(3)}-je_2-(6-j)e_1}\bigr)_{0\le j \le 7} = {f_{(0,4,0)}}^{-1} Q_g ^{p-1} \cdot (1, 0, \ldots, 0)^T.
\]
\end{enumerate}

\item Compute $\pi_{v,6}(\restr{f^{p-2}}{D(v, 6)})$ for $v=v^{(1)},v^{(2)},v^{(3)}$ (the black dots in Figure~\ref{fig:triangle}) using Theorem~\ref{theorem:calT mod p} and Equation~\eqref{equation:we only need pi} as follows:
\smallskip

\begin{enumerate}
\item Compute the first column of $A_p$ using $v^{(1)} = (p-1, p, 2p-1)$:
\begin{align*}
\left(f^{p-1}_{(p-1,p,2p-3)},\boldsymbol{f^{p-1}_{(p-1,p-1,2p-2)}},\boldsymbol{f^{p-1}_{(p-1,p-2,2p-1)}},f^{p-1}_{(p-2,p,2p-2)},\boldsymbol{f^{p-1}_{(p-2,p-1,2p-1)}},f^{p-1}_{(p-3,p,2p-2)}\right)\\
= \restr{\Bigl(\pi_{v^{(1)},6} \bigl(\restr{f^{p-2}}{D(v^{(1)}, 6)} \bigr)\Bigr)}{B(v^{(1)},2)} =  - C_p \circ \pi_{w^{(1)},6} \bigl( \restr{f^{p-2}}{D(w^{(1)}, 6)}\bigr).
\end{align*}

\item Compute the second column of $A_p$ using $v^{(2)} = (2p, p-1, p-1)$:
\begin{align*}
\left(f^{p-1}_{(2p,p-1,p-3)},f^{p-1}_{(2p,p-2,p-2)},f^{p-1}_{(2p,p-3,p-1)},\boldsymbol{f^{p-1}_{(2p-1,p-1,p-2)}},\boldsymbol{f^{p-1}_{(2p-1,p-2,p-1)}},\boldsymbol{f^{p-1}_{(2p-2,p-1,p-1)}}\right)\\
=\restr{\Bigl(\pi_{v^{(2)},6} \bigl(\restr{f^{p-2}}{D(v^{(2)}, 6)} \bigr)\Bigr)}{B(v^{(2)},2)}  =  - {C_p}^{-1} \circ \pi_{w^{(2)},6} \bigl( \restr{f^{p-2}}{D(w^{(2)}, 6)}\bigr).
\end{align*}

\item Compute the third column of $A_p$ using $v^{(3)} = (p-1, 2p, p-1)$:
\begin{align*}
\left(f^{p-1}_{(p-1,2p,p-3)},\boldsymbol{f^{p-1}_{(p-1,2p-1,p-2)}},\boldsymbol{f^{p-1}_{(p-1,2p-2,p-1)}},f^{p-1}_{(p-2,2p,p-2)},\boldsymbol{f^{p-1}_{(p-2,2p-1,p-1)}},f^{p-1}_{(p-3,2p,p-1)}\right)\\
=\restr{\Bigl(\pi_{v^{(3)},6}\bigl(\restr{f^{p-2}}{D(v^{(3)}, 6)}\bigr)\Bigr)}{B(v^{(3)},2)} =  - C_p \circ \pi_{w^{(3)},6} \bigl( \restr{f^{p-2}}{D(w^{(3)}, 6)} \bigr).
\end{align*}
\end{enumerate}

\item Output the matrix $A_p\in \F_p^{3\times 3}$ defined in \eqref{equation:cartier--main spq} using the coefficients of $f^{p-1}$ that are shown in bold above.
\end{enumerate}

\end{algorithm}

\begin{remark}
The matrix $Q_g^{p-1}$ in step (1c) is the same as in step (1a) and need not be recomputed.  The matrices that represent $\pi_{w,6}$ for $w=w^{(1)},w^{(2)},w^{(3)}$ in step (2) are all the same, since $\pi_{w,6}$ does not depend on $w$, by Lemma~\ref{lemma:compression}.  Indeed, if $\iota(t)\in \{1,\ldots \#D_\ell\}$ is the index of $t\in D_\ell$ in its lexicographic ordering, the matrix $W\in R^{16\times 28}$ with nonzero entries $W_{\iota(u),\iota(t+u)}\coloneqq F_t$ for $u\in D_2$ and $t\in D_4$ and $W_{6+j,18+j}\coloneqq 1$ for $1\le j\le 10$ represents $\pi_{w,6}$.
\end{remark}

\begin{remark}\label{rem:uncompressedCM}
If we instead use the ``uncompressed'' matrix $U_p\in \F_p^{28\times 28}$ representing the linear operator 
$\prod_{p - 1> k \ge 0} \tau_{w^{(1)} + k( e_0 -  e_1), 0, 1}^\calE$, which by \eqref{corollary:calT^s} satisfies
\[
U_p = -\lambda_6^{-1}\psi_{v^{(1)},6}^\calE \circ C_p \circ \pi_{w^{(1)},6}^\calE,
\]
we can consider an ``uncompressed'' version of Algorithm~\ref{algorithm:cartier manin}.  We replace $C_p\circ \pi_{w^{(1)},6}$ and $C_p\circ \pi_{w^{(3)},6}$ with $\pi_{v^{(1)}} \circ U_p$ and $\pi_{v^{(3)}} \circ U_p$, respectively, to obtain the desired vectors in (2a) and (2c), and for (2b) we replace $C_p^{-1}$ with $-\lambda_6 (\pi_{v^{(1)},6}^\calE \circ U_p\circ \psi_{w^{(1)},6}^\calE)^{-1}$.
\end{remark}

\begin{lemma}
Algorithm~\ref{algorithm:cartier manin} runs in $O(\log^2 p\log\log p)$ time using $O(\log p)$ space.
\end{lemma}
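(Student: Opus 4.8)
The plan is to trace through Algorithm~\ref{algorithm:cartier manin} step by step, counting operations in $\F_p$, and then multiply by the cost of a single $\F_p$-operation. The essential point is that the length-$(p-1)$ matrix product defining $C_p$ is \emph{not} recomputed here: $C_p$ (a fixed $16\times 16$ matrix over $\F_p$) is part of the input, so the only way the magnitude of $p$ enters the running time is through the binary powerings of constant-size matrices in Step~1 and through the bit-cost of arithmetic in $\F_p$. Write $\M(n)$ for the time to multiply two $n$-bit integers; with fast integer multiplication $\M(n)=O(n\log n)$, so an addition or multiplication in $\F_p$ takes $O(\M(\log p))=O(\log p\log\log p)$ time and $O(\log p)$ space, and an inversion in $\F_p$ takes $O(\log p\cdot\M(\log p))=O(\log^2 p\log\log p)$ time, e.g.\ via Fermat's little theorem; only $O(1)$ inversions will be required.

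For Step~1: forming the relevant degree-$\le 8$ polynomial $g(t)$ (a square of a univariate specialization of $f$) and the $8\times 8$ companion-type matrix $Q_g$ costs $O(1)$ operations in $\F_p$, including $O(1)$ inversions of leading coefficients; computing $Q_g^{\,p-1}$ and $Q_g^{\,2p-1}$ by repeated squaring uses $O(\log p)$ products of $8\times 8$ matrices over $\F_p$, hence $O(\log p)$ operations in $\F_p$ since $8$ is fixed; and combining these powers with scalar coefficients and taking a matrix--vector product to read off the seven needed entries of $\restr{f^{p-2}}{D(w,6)}$ is a further $O(1)$ operations. There are only two distinct matrices $Q_g$ over the three values of $w$ (by the remark following the algorithm), so Step~1 costs $O(\log p)$ operations in $\F_p$ overall.

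For Step~2: the map $\pi_{w,6}$ is represented by the fixed $16\times 28$ matrix $W$ described in the remark, independent of $w$, so each $\pi_{w,6}(\restr{f^{p-2}}{D(w,6)})$ is $O(1)$ operations; multiplying by the given $C_p$, or by $C_p^{-1}$, is $O(1)$ operations once $C_p^{-1}$ is available, and $C_p^{-1}$ is obtained by Gaussian elimination on a $16\times 16$ matrix in $O(1)$ operations plus $O(1)$ inversions. Step~3 reads off the nine entries of $A_p$ in $O(1)$ operations. Altogether the algorithm performs $O(\log p)$ additions and multiplications in $\F_p$ together with $O(1)$ inversions, and at every point holds only $O(1)$ elements of $\F_p$ (the fixed-size objects $f$, $Q_g$ and its running powers, $C_p$, $C_p^{-1}$, and the length-$\le 28$ intermediate vectors). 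Multiplying the operation counts by the per-operation costs above yields total time $O(\log p)\cdot O(\log p\log\log p)+O(1)\cdot O(\log^2 p\log\log p)=O(\log^2 p\log\log p)$ and space $O(1)\cdot O(\log p)=O(\log p)$. The argument is routine arithmetic bookkeeping; the one thing to verify carefully is that nothing hides a $\Theta(p)$-length computation — concretely, that $C_p$ is consumed as a given matrix and that every power $Q_g^{\,s}$ is formed by binary exponentiation rather than iterated multiplication.
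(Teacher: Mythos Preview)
Your proof is correct and follows essentially the same approach as the paper: count $O(\log p)$ ring operations in $\F_p$ (from the binary powerings of the $8\times 8$ matrices $Q_g$) plus $O(1)$ inversions and $O(1)$ further field operations, then multiply by the bit-cost of arithmetic in $\F_p$ using $\M(n)=O(n\log n)$. The only minor difference is that you bound the cost of an inversion via Fermat's little theorem at $O(\log^2 p\log\log p)$, whereas the paper invokes a fast GCD to get $O(\log p(\log\log p)^2)$ per inversion; since only $O(1)$ inversions occur, either bound suffices.
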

\begin{proof}
The algorithm uses $O(\log p)$ ring operations for the matrix exponentiations and $O(1)$ field inversions in step (1), and $O(1)$ field operations in step (2).  Each ring operation in~$\F_p$ can be achieved using $O(1)$ ring operations in $\Z$ on integers with $O(\log p)$ bits (using Newton iteration to perform fast Euclidean division, see \cite[Thm.\,9.8]{GG13}), which yields a bit complexity of $O(\M(\log p)) = O(\log p \log\log p)$ per ring operation via \cite{HvdH21}.
We can perform field inversions in $O(\M(\log p)\log\log p)=O(\log p (\log \log p)^2)$ time using a fast GCD algorithm \cite[Cor.\,11.13]{GG13}, which is dominated by the cost of $O(\log p)$ ring operations; the time bound follows and the space bound is immediate.
\end{proof}

We now give our algorithms to compute the Cartier--Manin matrix of a smooth plane quartic.  Let us define the matrix
\begin{equation}\label{eq:Mt}
M(t) \coloneqq T_{w^{(1)}+t(e_0-e_1),0,1}\in R[t]^{16\times 16},
\end{equation}
whose entries are polynomials in $t$ of degree at most 2, by Corollary~\ref{corollary:calT^s}.  From \eqref{eq:Tvij}, we see that $M(t)$ can be computed as the product of matrices in $R[t]^{16\times 28}, R[t]^{28\times 28}, R[t]^{28\times 16}$ representing the maps $\pi^{\calE}_{v(t)+e_0-e_1,6}$, $\tau^\calE_{v(t),0,1}$, $\psi^\calE_{v(t),6}$, respectively, where $v(t)=w^{(1)}+t(e_0-e_1)$.
The matrices representing $\pi^{\calE}_{v(t)+e_0-e_1,6}$  and $\psi^\calE_{v(t),6}$ are computed as in the proof of Lemma~\ref{lemma:compression}: the matrix representing  $\pi^{\calE}_{v(t)+e_0-e_1,6}$ is independent of $v(t)$, its entries are coefficients of~$f$ or elements of $\{0,1\}$, while the entries in $\psi^\calE_{v(t),6}$ are determined by \eqref{equation:phi definition}.
The matrix representing $\tau^\calE_{v(t),0,1}=P_{v(t)+e_0,j}^\calE\circ \phi_{v(t),i}^\calE$ is computed by composing the matrix in $\{0,1\}^{28\times 36}$ representing the projection $P_{v(t)+e_0,1}^\calE$ with the matrix in $R[t]^{36\times 28}$ representing $\phi_{v,i}^{\calE}$ whose entries are given by \eqref{eq:phivi}.  From equation \eqref{eq:Tvijs} we then have
\begin{equation}\label{eq:Cp}
C_p \coloneqq T^{p-1}_{w^{(1)},0,1} =\!\!\! \prod_{p-1>j\geq 0} \!\!\! M(j)\bmod p\in \F_p^{16\times 16}.
\end{equation}

\begin{algorithm}\label{algorithm:mod p}
Given a nondegenerate  $f \in \F_p[x_0,x_1,x_2]_4$, compute the Cartier--Manin matrix $A_p$ of the smooth plane quartic $X\colon f(x_0,x_1,x_2)=0$ as follows:

\begin{enumerate}
\setlength{\itemsep}{3pt}
\item Compute the matrix $M(t)\in \F_p[t]^{16\times 16}$ corresponding to $f$ as described above.
\item Compute the matrix $C_p= T_{w_1,0,1}^{p-1} = \prod_{p-1>j\geq 0} M(j)\in \F_p^{16\times 16}$.
\item Use Algorithm~\ref{algorithm:cartier manin} with inputs $C_p$ and $f$ to compute the Cartier--Manin matrix $A_p$.
\end{enumerate}
\end{algorithm}

\begin{remark}\label{rem:uncompressedmodp}
We may also consider an uncompressed version of Algorithm~\ref{algorithm:cartier manin} that uses $M(t)\coloneqq \tau_{w^{1}+t(e_0-e_1),0,1}^\calE\in R[t]^{28\times 28}$ to compute the matrix $U_p$ defined in Remark~\ref{algorithm:cartier manin} rather than using the matrices $M(t)$ defined in \eqref{eq:Mt} to compute $C_p$. Note that in the former case the entries of $M(t)$ have degree at most 1 rather than 2.
\end{remark}

\begin{theorem}\label{thm:modp}
Algorithm~\ref{algorithm:mod p} can be implemented to use $O(p\log p\log\log p)$ time and $O(\log p)$ space, and also to use $O(p^{1/2}\log^2\! p)$ time and $O(p^{1/2}\log p)$ space.
\end{theorem}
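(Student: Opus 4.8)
The plan is to reduce the whole analysis to the cost of Step~(2), the computation of the $16\times 16$ matrix product
\[
   C_p \;=\!\!\! \prod_{p-1>j\ge 0} \!\!\! M(j) \;\in\; \F_p^{16\times 16}
\]
of $p-1$ matrices of fixed size over~$\F_p$, since the other two steps are cheap. Step~(1) assembles $M(t)$ as the product of three constant-size matrices over $\F_p[t]$, together with the constant-size linear algebra over $\F_p$ from the proofs of Lemmas~\ref{lemma:compression} and~\ref{lemma:construction phi} (the determinant $\theta_i=\det A$ and adjugate $\adj(A)$ of an $8\times 8$ matrix, and the data $h_{u,i}$, $c_{u,\beta}$, $\lambda_\ell$), so it uses $O(1)$ ring operations and $O(1)$ inversions in $\F_p$ and $O(\log p)$ space. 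Step~(3) is a call to Algorithm~\ref{algorithm:cartier manin}, which by the preceding lemma runs in $O(\log^2 p\log\log p)$ time and $O(\log p)$ space. As in that lemma I would use that one ring operation in $\F_p$ costs $O(\M(\log p))=O(\log p\log\log p)$ bit operations and one inversion costs $O(\M(\log p)\log\log p)$ bit operations, so both of these steps are negligible against either target bound; the uncompressed variant of Remark~\ref{rem:uncompressedmodp}, with $28\times 28$ matrices of degree at most~$1$, is handled identically.

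For the first bound I would accumulate the product naively: starting from the identity, for $j=0,1,\ldots,p-2$ evaluate the matrix polynomial $M$, whose entries have degree at most~$2$, at~$j$, and left-multiply the accumulator by $M(j)$. As the matrices have bounded size and degree this is $O(1)$ ring operations in $\F_p$ per step, so $O(p)$ ring operations in total and hence $O(p\log p\log\log p)$ bit operations; the only data kept are the accumulator, the coefficients of $M(t)$, and the loop counter, giving $O(\log p)$ space.

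For the second bound I would use the baby-step giant-step evaluation of a linearly recurrent matrix sequence, in the style of Strassen and Bostan--Gaudry--Schost. Put $s\coloneqq\lceil (p-1)^{1/2}\rceil$ and form the polynomial matrix
\[
   P(x)\;\coloneqq\;M(x+s-1)\cdots M(x+1)\,M(x)\;\in\;\F_p[x]^{16\times 16},
\]
whose entries have degree $O(s)$, by a balanced binary product tree of polynomial-matrix multiplications; since the matrices have constant size this is $O(1)$ multiplications of polynomials of degree $O(s)$ per tree node, and the usual product-tree estimates give $\widetilde O(s)$ ring operations in $\F_p$. I would then recover the constant matrices $P(0),P(s),P(2s),\ldots$ by fast multipoint evaluation of the $O(1)$ scalar entries of~$P$ at the $O(s)$ points that are needed, in a further $\widetilde O(s)$ ring operations, multiply those matrices together in the decreasing-index order dictated by~\eqref{eq:Tvijs}, and mop up the fewer than $s$ leftover factors $M(j)$ one at a time in $O(s)$ more ring operations. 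Altogether this is $\widetilde O(p^{1/2})$ ring operations, and inserting the standard fast-arithmetic estimates yields the claimed $O(p^{1/2}\log^2 p)$ time; the space is dominated by storing $O(1)$ polynomials of degree $O(s)$ together with the product-tree and multipoint-evaluation data, a total of $O(p^{1/2}\log p)$ bits.

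The part that I expect to need genuine care, rather than routine bookkeeping, is making the polylogarithmic factors in the second bound precise: confirming that the overheads of the product tree and of fast multipoint evaluation over $\F_p$ really combine into the single stated factor of $\log^2 p$, and handling the case where $s$ does not divide $p-1$ correctly, both in the operation count and in the decreasing-index convention of~\eqref{eq:Tvijs}. There is no conceptual difficulty here; this is simply the single-prime version of the general matrix-product-recurrence machinery sketched in the introduction, specialised to $r=16$.
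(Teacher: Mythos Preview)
Your handling of Steps~(1) and~(3) and of the first complexity bound is correct and matches the paper: accumulating the product one factor at a time uses $O(p)$ ring operations in~$\F_p$, hence $O(p\log p\log\log p)$ bit operations and $O(\log p)$ space.

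For the second bound there is a genuine gap, and it is exactly the one you flag at the end. The procedure you describe---build the polynomial matrix $P(x)=M(x+s-1)\cdots M(x)$ explicitly via a balanced product tree, then apply fast multipoint evaluation---is the classical Strassen approach, and both the product tree and the multipoint evaluation cost $O(\M_{\mathrm{poly}}(s)\log s)$ ring operations in~$\F_p$, because each has $\Theta(\log s)$ levels of aggregate cost $O(\M_{\mathrm{poly}}(s))$. With $s\approx p^{1/2}$ this is $O(\M_{\mathrm{poly}}(p^{1/2})\log p)$ ring operations, which via Kronecker substitution yields $O(p^{1/2}\log^3 p)$ bit operations, one full $\log p$ factor too many. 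The paper does not carry out any such analysis; it simply invokes the algorithm of Bostan--Gaudry--Schost~\cite{BGS07} as a black box. The point of~\cite{BGS07} is precisely to remove that $\log s$ overhead: rather than ever forming $P(x)$ as a polynomial, one works with its values at an arithmetic progression of points throughout, doubling the number of known values at each stage using their fast ``shift by~$a$'' subroutine (computing $Q(a),Q(a{+}1),\ldots,Q(a{+}d)$ from $Q(0),\ldots,Q(d)$ in $O(\M_{\mathrm{poly}}(d))$ operations). This yields $O(\M_{\mathrm{poly}}(p^{1/2}))$ ring operations total, hence $O(\M(p^{1/2}\log p))=O(p^{1/2}\log^2 p)$ bit operations and $O(p^{1/2}\log p)$ space. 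So your suspicion is well founded: the product-tree-plus-multipoint-evaluation route does \emph{not} combine into a single $\log^2 p$, and you must use the actual interpolation/shift algorithm of~\cite{BGS07} (not merely cite it) to meet the stated bound.
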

\begin{proof}
The first complexity bound is achieved by iteratively instantiating the entries of $M(t)$ at $t=k$ and accumulating the matrix product in $C_p$.  This involves $O(p)$ ring operations in~$\F_p$, which takes $O(p\log p\log\log p)$ time using $O(\log p)$ space. The second complexity bounds is achieved by using the interpolation/evaluation algorithm of Bostan--Gaudry--Schost \cite{BGS07} to compute $\prod_{p-1>j\geq 0} M(j)$, which uses $\M(p^{1/2}\log p) = O(p^{1/2}\log^2 p)$ time and $O(p^{1/2}\log p)$ space.
The cost of invoking Algorithm~\ref{algorithm:cartier manin} in step (2) is negligible in both cases.
\end{proof}

\begin{remark}\label{rem:lineartime}
In our $O(p\log p\log\log p)$ implementation, rather than computing $C_p$ as the product of $p-1$ matrices $M(j)$, we instead iteratively multiply the vectors $\pi_{w^{(i)},6}( \restr{f^{p-2}}{D(w^{(i)}, 6)}\bigr)$ that appear in steps (2a) and (2c) of Algorithm~\ref{algorithm:cartier manin} by each matrices $M(j)$ as it is computed.  We then repeat this process using the curve defined by $f(x_1,x_0,x_2)$ to obtain the vector computed in step (2b); note that in steps (1c) and (2c) of Algorithm~\ref{algorithm:cartier manin} are identical to steps (1b) and (2b) except the roles of $x_0$ and $x_1$ are reversed.  This effectively replaces each matrix multiplication with 3 matrix-vector multiplications and is practically faster in the range of $p$ we consider.  The matrices $M(j)$ for $j=0,\ldots,p-1$ can be efficiently enumerated using finite differences (the entries of $M(t)$ are polynomials of degree at most 2).
\end{remark}

We now turn to the case $R=\Z$, where our strategy is to use an average polynomial-time approach to simultaneously compute the matrices $C_p$ at suitable primes $p\le N$ using a single matrix $M(t)\in \Z[t]^{16\times 16}$.  A nondegenerate polynomial $f\in\Z[x_0,x_1,x_2]_4$ will have nondegenerate reduction modulo all primes $p$ that do not divide $\Delta_4^*(f)$, but in order to obtain a valid matrix $C_p$ to use as input for Algorithm~\ref{algorithm:cartier manin} computed via \eqref{eq:Cp} with $M(t)\in \Z[t]^{16\times 16}$ we also need to ensure that the scalar $(m+1)\lambda_6$ arising in Lemma~\ref{lemma:compression} and the degree $d = 4$ are both nonzero modulo $p$.

Now $m+1=p-1$ is never divisible by $p$, so it suffices to restrict our attention to odd primes that do not divide $\lambda_6$. We thus define $D\coloneqq 2\lambda_6 \Delta_4 ^*(f)$ and treat all primes $p\le N$ that do not divide $D$ using an average polynomial-time approach and handle good primes $p\mid D$ as special cases via Remark~\ref{rem:badp} below.  The primes $p \mid D$ are bounded by a constant that does not depend on $N$, thus the time spent handling the good $p\mid D$ has no impact on the complexity of our algorithm as a function of $N$ (and it is completely negligible in practice).

\begin{remark}\label{rem:badp}
For primes $p\mid D$ where $f$ has good reduction we can compute the Cartier--Manin matrix directly from its definition, but we can more efficiently treat $p \nmid \Delta_4^*(f)$ by simply applying Algorithm~\ref{algorithm:mod p} to the nondegenerate reduction of $f$ modulo $p$.  In our implementation we do the same for good primes $p\mid \Delta_4^*(f)$ greater than 3 by applying a random linear transformation to the reduction of $f$ modulo $p$ until we obtain a nondegenerate polynomial $\widetilde f \in \F_p[x_0,x_1,x_2]$ that defines an isomorphic curve. For $p>3$ such a nondegenerate polynomial is guaranteed to exist by Proposition 3.2 of \cite{CV10}, and in practice we can find one quickly.  Note that we have assumed $f(x_0,x_1,x_2)=0$ is a model for $X$ that is smooth a $p$, but if not, replace $f$ modulo $p$ with the reduction of a model for $X$ that is smooth at $p$.
\end{remark}

Before describing our average polynomial-time algorithms to compute $A_p$ for $p\le N$ coprime to~$D$, we briefly recall some background material on remainder trees and forests.
Given a sequence of integer matrices $M_0,\ldots, M_{N-1}$ and a sequence of coprime integers $m_0,\ldots,m_{N-1}$ we wish to compute the following sequence of reduced partial products for $0\le k < N$:
\[
P_k\coloneqq M_0\cdots M_k\bmod m_k.
\]
Let $M_{-1}\coloneqq M_N\coloneqq m_N\coloneqq 1$, and for $0\le k < N/2$ let $M_k'\coloneqq M_{2k-1}M_{2k}$ and $m_k'\coloneqq m_{2k}m_{2k+1}$.
If we now recursively compute $P_k'\coloneqq M_0'\cdots M_k'\bmod m_k' = M_0\cdots M_{2k}\bmod m_{2k}m_{2k+1} $ for $0\le k < N/2$, we can then compute
\[
P_{2k} = P_{k}'\bmod m_{2k}\qquad\text{and}\qquad P_{2k+1} =  P_k'M_{2k+1} \bmod m_{2k+1}.
\]
Unwinding this recursion yields the \textsc{RemainderTree} algorithm described in \cite{HS14}.

The \textsc{RemainderForest} algorithm in \cite{HS16} reduces the time and (especially) the space needed by splitting the remainder tree into $2^\kappa$-subtrees, for a suitable choice of $\kappa$.  In \cite{HS14,HS16,Sut20} the \textsc{RemainderForest} algorithm is used to compute the sequence of vectors $V_k\coloneqq V_0M_0\cdots M_k\bmod m_k$ using vector-matrix multiplications to carry results from one subtree to the next, but it can also be used to compute $P_k= I M_0\cdots M_k\bmod m_k$ using the same approach. Below we record a special case of \cite[Theorem 3.3]{HS16}, in which $\|M_k\|$ denotes the logarithm of the largest absolute value appearing in the nonzero matrix $M_k$.

\begin{theorem}\label{thm:forest}
Fix a constant $c>0$.
Let $N$ be a positive integer, let $m_0,\ldots,m_{N-1}$ be positive coprime integers with $\log \prod_{k=0}^n m_k\le c n$ for $2\le n< N$, let $M_0,\ldots,M_{N-1}\in \Z^{r\times r}$ be nonzero integer matrices with $ r < c\log N$ and $\|M_i\|\le c\log N$.
We can compute the matrices
\[
P_k\coloneqq \prod_{i=0}^k M_i\bmod m_k
\]
for $0\le k < N$ in $O(r^2 N\log^3\!N)$ time using $O(r^2N)$ space.
\end{theorem}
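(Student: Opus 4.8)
The plan is to deduce this directly from \cite[Theorem 3.3]{HS16}: the statement here is the special case of that result in which one accumulates matrix products rather than vector--matrix products, so the work reduces to (i) a cosmetic reformulation, (ii) checking that our hypotheses imply those of \cite{HS16}, and (iii) reading off the claimed bounds from the analysis there.

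First I would recall the \textsc{RemainderForest} algorithm of \cite{HS16}: for $r\times r$ integer matrices $M_0,\dots,M_{N-1}$ and pairwise coprime $m_0,\dots,m_{N-1}$ it computes $V_k\coloneqq V_0M_0\cdots M_k\bmod m_k$ for a fixed initial vector $V_0$, by partitioning $\{0,\dots,N-1\}$ into $2^\kappa$ blocks of $N/2^\kappa$ consecutive indices, building a product tree within each block, and carrying the accumulated product from one block to the next. To obtain the matrices $P_k=\prod_{i=0}^k M_i\bmod m_k$ of the theorem I would run this with $V_0$ ranging over the rows of the identity matrix---equivalently, carry the full $r\times r$ accumulated product across block boundaries instead of a single vector. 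As already noted in the paragraph preceding the theorem, this changes only constant factors and the $r$-exponent of the inter-block carry step, which is not the bottleneck since there are only $2^\kappa$ blocks.

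Next I would verify the hypotheses of \cite[Theorem 3.3]{HS16}. Coprimality and $\log\prod_{k=0}^{n}m_k\le cn$ are assumed outright, as is $r<c\log N$; each $M_i$ has $r^2$ entries of $O(\log N)$ bits, hence bit-size $O(r^2\log N)$. The one estimate worth spelling out is the growth of subproducts: by submultiplicativity of the largest-entry logarithm, a product $M_i\cdots M_{i+h-1}$ of $h$ consecutive matrices satisfies $\|M_i\cdots M_{i+h-1}\|\le\sum_{t}\|M_{i+t}\|+(h-1)\log r=O(h\log N)$, the $\log r=O(\log\log N)$ term being absorbed into the constant since $r=O(\log N)$; after reduction modulo the product of the corresponding $h$ moduli, which has $O(h)$ bits for $h\ge 2$, the subproduct stored at a product-tree node of height $h$ is an $r\times r$ matrix with $O(h\log N)$-bit entries. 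These are precisely the size estimates underlying \cite{HS16}.

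With these in hand the complexity is read off from \cite{HS16}: summing node-multiplication costs over the $O(\log N)$ levels of each block tree, using $\M(b)=O(b\log b)$ \cite{HvdH21}, and choosing the number of blocks $2^\kappa$ as in \cite{HS16} so that the space of the plain accumulating remainder tree, namely $O(r^2N\log N)$, collapses to $O(r^2N)$, yields $O(r^2N\log^3\!N)$ time and $O(r^2N)$ space. I expect the only delicate point to be faithfully transporting the \cite{HS16} bookkeeping to the noncommutative matrix setting---tracking entry-size growth along the product trees and confirming that carrying full matrices, rather than vectors, across the $2^\kappa$ block boundaries stays within the stated time and space budget---but this requires no idea beyond what is already in \cite{HS14,HS16}.
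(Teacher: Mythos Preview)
Your proposal is correct and follows essentially the same route as the paper: invoke \cite[Theorem~3.3]{HS16} with $V_0$ replaced by the identity matrix and verify that carrying full matrices across the $2^\kappa$ block boundaries is asymptotically negligible. The paper is a bit more explicit than you are on three points you gloss over---it fixes $\kappa=\lfloor 2\log_2\log_2 N\rfloor$ (so that $2^\kappa=O(\log^2 N)$ inter-block matrix multiplications cost $O(r^2N\log N)$ each), it substitutes the matrix-multiplication bound $O(r^2 B\log B)$ from \cite{HvdH18} for \cite[Lemma~3.1]{HS16} using $r=O(\log B)$, and it notes that the hypothesis ``$\M(n)/(n\log n)$ is increasing'' in \cite{HS16} can be dropped once one plugs in $\M(n)=O(n\log n)$ from \cite{HvdH21}---but these are bookkeeping refinements within the same argument, not a different approach.
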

\begin{proof}
We apply \cite[Thm.~3.3]{HS16} with $\kappa \coloneqq \lfloor 2\log_2\log_2 N\rfloor $, $B=cN$, $B'=1$, $H=c\log N$.  We use $\M(n)=O(n\log n)$ from \cite{HvdH21} and note that replacing $\M(n)$ with $n\log n$ in the statement of \cite[Lem.\,4]{HvdH18} allows us to omit the last step of the proof where the hypothesis that $\M(n)/(n\log n)$ is increasing is used and remove that hypothesis.

Provided $\log r = O(\log B)$, the complexity of multiplying $r\times r$ matrices with $B$-bit entries is $O(r^2B\log B + r^\omega B \log\log B)$, where $\omega < 3$ is the exponent of matrix multiplication.  We have $r=O(\log B)$, so this is $O(r^2B\log B)=O(r^2N\log N)$,
which we may substitute for \cite[Lem.\,3.1]{HS16} in the proof of \cite[Thm\,3.3]{HS16}. The cost of replacing vector-matrix multiplications with matrix multiplications as we transition from one subtree to the next is asymptotically negligible: we may reduce modulo $m\coloneqq \prod_{k=0}^{N-1}m_k$ throughout and perform $O(2^\kappa)=O(\log^2 N)$ matrix multiplications with $O(N)$-bit entries, each involving $O(r^2N\log N)$ bit operations.
\end{proof}

\begin{algorithm}\label{algorithm: avgpoly}
Given $f \in \Z[x_0,x_1,x_2]_4$ with $\Delta_4^*(f)\ne 0$ and a positive integer $N$, compute the Cartier--Manin matrices $A_p$ of the reductions of the smooth plane quartic $X\colon f(x_0,x_1,x_2)=0$ modulo primes $p\le N$ of good reduction for $X$ as follows:

\begin{enumerate}
\setlength{\itemsep}{3pt}
\item Use the \textsc{RemainderForest} algorithm to compute $C_p=\prod_{p-1>j\ge 0} M(j)\bmod p$ for primes $p\le N$ with $p\nmid D$ using the matrices $M_i\coloneqq M(-2-i)\in \Z^{16\times 16}$ and moduli $m_i\coloneqq i+2$ when $i+2$ is a prime $p\nmid D$ and with $m_i\coloneqq 1$ otherwise, for $0\le i < N-1$.  The matrices~$M_i$ and moduli $m_i$ should be dynamically computed as needed.
\item For each $C_p$ computed in (1) apply Algorithm~\ref{algorithm:cartier manin} with input $f\bmod p$ and $C_p$ to compute $A_p$.
This step should be interleaved with step (1), computing the relevant~$A_p$ in batches as the \textsc{RemainderForest}  algorithm completes each subtree.
\item For $p\le N$ of good reduction dividing $D$ compute $A_p$ via Remark~\ref{rem:badp}.
\end{enumerate}
\end{algorithm}

Note that for primes $p\le N$ that do not divide $D$ we have
\begin{align}\notag
P_{p-2} &= \prod_{i=0}^{p-2} M_i\bmod m_{p-2} = \prod_{i=0}^{p-2}M(-2-i)\bmod p\\
        &\equiv \prod_{i=0}^{p-2}M(p-2-i)\equiv \prod_{p-1>j\ge 0}M(j) \equiv C_p\bmod p,
\end{align}
thus step (1) of Algorithm~\ref{algorithm: avgpoly} computes exactly the matrices $C_p$ that are needed in step (2).

\begin{remark}\label{rem:divfactor}
Lemma \ref{lemma:compression} and Corollary~\ref{corollary:calT^s} imply that each integer matrix product $M_iM_{i+1}$ is divisible by $\lambda_6$. In our implementation of Algorithm~\ref{algorithm: avgpoly} we precompute $\lambda_6$ and remove it from each matrix product computed during the \textsc{RemainderForest} computation in step~(1). This changes the output $P_{p-2}\bmod p$ by a factor of $\lambda_6^{p-2}$, and we divide once more by $\lambda_6$ to obtain the desired matrix $C_p$, since $\lambda_6^{p-1}\equiv 1\bmod p$ (note that $\lambda_6\!\mid\! D$  so $p\nmid \lambda_6$).  This does not change the complexity of the algorithm, but it reduces the sizes of the matrix coefficients in every layer of the product tree above the leaves by roughly a factor of 2, which yields a significant constant factor speedup (more than a factor of 2 in our tests). \end{remark}

\begin{remark}\label{rem:uncompressedavgpoly}
As in Remark~\ref{rem:uncompressedmodp}, we may also consider an uncompressed version of Algorithm~\ref{algorithm: avgpoly} that instead computes $28\times 28$ matrices $U_p\bmod p$ and uses Remark~\ref{rem:uncompressedCM} to compute the Cartier--Manin matrices $A_p$.  In this uncompressed version we are not able to apply the optimization noted in Remark~\ref{rem:divfactor}.
\end{remark}

\begin{remark}\label{rem:ellcover}
Algorithms~\ref{algorithm:mod p} and~\ref{algorithm: avgpoly} can be modified to more efficiently handle smooth plane quartics of the form $f(x_0,x_1,x_2)=x_0^4+h(x_1,x_2)x_0^2+g(x_1,x_2)$. In this case $f^{p-1}_v=0$ whenever $v_0$ is odd, and for $p>2$ this implies that the Cartier--Manin matrix $A_p\in \F_p^{3\times 3}$ has at most five nonzero entries: the four corners and the center.  The center corresponds to the $1\times 1$ Cartier--Manin matrix of the genus 1 curve $x_0^2=h(x_1,x_2)^2-4g(x_1,x_2)$ which can be computed via \cite{HS16} using $4\times 4$ matrices.  Restricting the domain and codomain of
\[
\tau_{w^{(1)}+(2t+1)(e_0-e_1),0,1}^{\mathcal E}\circ \tau_{w^{(1)}+2t(e_0-e_1),0,1}^{\mathcal E}
\]
to the subspaces spanned by monomials with even degree in $x_0$ yields a matrix $M\in R[t]^{16\times 16}$. One finds that $M$ can be compressed via a coordinate projection to a $10\times 10 $ matrix $M'$, and we compute $W_p:=\prod_{\frac{p-3}{2}\ge k\ge 0}M(k)\bmod p$ as the product of $M(\frac{p-3}{2})$ and the zero extension of $\prod_{\frac{p-3}{2}> k \ge 0}M'(t)\bmod p$. The matrix $W_p$ can then be zero extended to $U_p\in \F_p^{28\times 28}$ and used to compute the four corner entries of $A_p$ via Remark~\ref{rem:uncompressedmodp}. 
\end{remark}

\begin{theorem}\label{thm:avgpoly}
Algorithm~\ref{algorithm: avgpoly} runs in $O(N\log^3\!N)$ time using $O(N)$ space.
\end{theorem}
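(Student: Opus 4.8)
The plan is to reduce everything to Theorem~\ref{thm:forest}, applied to the sequences of matrices $(M_i)$ and moduli $(m_i)$ constructed in step~(1) of Algorithm~\ref{algorithm: avgpoly}, and then to check that steps~(2)--(3) and the one-time setup contribute nothing asymptotically.

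First I would verify the hypotheses of Theorem~\ref{thm:forest} with $r=16$. The matrix dimension is the constant $r=16$, so $r<c\log N$ for all large~$N$. The moduli $m_0,\ldots,m_{N-2}$ are each either $1$ or a distinct prime $p\le N$, hence pairwise coprime, and $\log\prod_{k=0}^{n}m_k\le\sum_{p\le n+2}\log p=O(n)$ by Chebyshev's bound, so $\log\prod_{k=0}^{n}m_k\le cn$ for $2\le n<N-1$ with a suitable constant~$c$. For the matrices, by \eqref{eq:Mt} and Corollary~\ref{corollary:calT^s} the entries of $M(t)\in\Z[t]^{16\times16}$ are polynomials in~$t$ of degree at most~$2$ with integer coefficients depending only on~$f$, so each $M_i=M(-2-i)$ satisfies $\|M_i\|=O(\log N)$, with an implied constant depending on~$f$. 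Finally $M_i\ne0$ for $0\le i<N-1$: for every odd prime $p\nmid D$ the matrix $C_p=\prod_{p-1>j\ge0}M(j)\bmod p$ is invertible modulo~$p$ by Theorem~\ref{theorem:calT mod p}(b) (since $w^{(1)}_0\equiv0$ and $w^{(1)}_1\equiv-1\bmod p$), hence so is each factor $M(j)\bmod p$ for $0\le j\le p-2$; applying this with $j\equiv-2-i\bmod p$ and letting $p\to\infty$ shows $M(-2-i)$ cannot vanish over~$\Z$.

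With these hypotheses in hand, Theorem~\ref{thm:forest} computes the reduced partial products $P_k$ needed in step~(1) in $O(r^2N\log^3N)=O(N\log^3N)$ time using $O(r^2N)=O(N)$ space, and, as recorded just after the statement of Algorithm~\ref{algorithm: avgpoly}, $P_{p-2}\equiv C_p\bmod p$ for every prime $3\le p\le N$ with $p\nmid D$. For step~(2), each invocation of Algorithm~\ref{algorithm:cartier manin} costs $O(\log^2p\log\log p)$ time and $O(\log p)$ space (established earlier), so over the $O(N/\log N)$ primes $p\le N$ this totals $O(N\log N\log\log N)$ time, which is absorbed into $O(N\log^3N)$; interleaving with step~(1) and streaming (or storing) the $O(N/\log N)$ matrices~$A_p$, each of $O(\log N)$ bits, keeps the space at $O(N)$. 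For step~(3), $D=2\lambda_6\Delta_4^*(f)$ is a fixed nonzero integer independent of~$N$, so there are $O(1)$ primes $p\mid D$, each of size $O(1)$, and handling each good such~$p$ by Algorithm~\ref{algorithm:mod p} as in Remark~\ref{rem:badp} (cost $O(p\log p\log\log p)$ by Theorem~\ref{thm:modp}) is $O(1)$ in total; likewise the one-time construction of~$M(t)$ and of~$D$ from~$f$ is $O(1)$. Adding the contributions yields the claimed $O(N\log^3N)$ time and $O(N)$ space.

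The only part that is not entirely mechanical is confirming that the integer matrices~$M_i$ are nonzero with $\|M_i\|=O(\log N)$ throughout the index range; this is where the degree bound of Corollary~\ref{corollary:calT^s} and the invertibility statement of Theorem~\ref{theorem:calT mod p}(b) enter. Everything else is bookkeeping: matching constants against Theorem~\ref{thm:forest} and summing the per-prime costs of steps~(2)--(3).
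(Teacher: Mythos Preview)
Your proposal is correct and follows essentially the same approach as the paper's proof, which simply invokes Theorem~\ref{thm:forest} for step~(1), bounds step~(2) by $O(N/\log N)$ calls to Algorithm~\ref{algorithm:cartier manin}, and dismisses step~(3) as negligible since $D$ is fixed. You have been more careful than the paper in explicitly verifying the hypotheses of Theorem~\ref{thm:forest} (coprimality and size of the moduli via Chebyshev, the $\|M_i\|=O(\log N)$ bound from the degree-$2$ entries of $M(t)$, and the nonvanishing of each $M_i$ via the invertibility of $C_p$ modulo large primes), all of which the paper leaves implicit.
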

\begin{proof}
Theorem~\ref{thm:forest} implies that the complexity of step (1) is within the desired bounds.  Step (2) calls Algorithm~\ref{algorithm:cartier manin} $O(N/\log N)$ times, which takes $O(N\log N\log\log N)$ time using $O(\log N)$ space.  The complexity of step (3) is asymptotically negligible, since $D$ is fixed as a function of $N$, and the theorem follows.
\end{proof}

To help assess the benefits of our new recurrences, we also implemented an algorithm that uses the recurrences derived in \cite{Har15} to compute the Cartier--Manin matrix $A_p$ of a smooth plane quartic $X:f(x_0,x_1,x_2)=0$ (or its reduction modulo $p$ when $R=\Z$).  If one applies \cite[Thm.\,4.1]{Har15} with $n=2$, $d=4$, $s=1$, $h=(d-1)(n+1)+1=10$, $k_0=p-1$, and $w=v+z$ with $z=(0,0,6)\in D_{h-d}$, one obtains a matrix $Q\in R[k ,l]^{66\times 66}$ that can be used to compute $\restr{f^{p-1}}{D(pv+z,10)}$ for any $v\in D_4$ via
\begin{equation}\label{eq:Qmat}
\restr{f^{p-1}}{D(pv+z,10)} = \frac{1}{d^{p-1}(p-1)!}Q(p-1,p-2)Q(p-1,p-3)\cdots Q(p-1,0)\restr{g^{p-1}}{D(pv+z,10)},
\end{equation}
where $g(x_0,x_1,x_2)=x_0^4+x_1^4+x_2^4$.  The algorithm in \cite{Har15} uses the matrix $Q$ to compute a matrix $M_s$ which is then used to compute the matrix $A_{F^s}^{ar}$ that appears in the trace formula \cite[Thm.\,3.1]{Har15}, but the Cartier--Manin matrix $A_p$ can be computed directly from \eqref{eq:Qmat}, and it suffices to compute the product $M(p-2)M(p-3)\cdots M(0)\bmod p$, where $M(j)\coloneqq Q(-1,j)$; the algorithm in \cite{Har15} works modulo $p^2$ when $s=1$, but that is not necessary here.  This product does not depend on $v\in D_4$, so it suffices to compute a single matrix product and then apply \eqref{eq:Qmat} using $v=(1,1,2), (2,1,1), (1,2,1)$; this yields three vectors in $\F_p^{66}$, each of which contains three entries that correspond to a column of $A_p$.

Having reduced the problem to computing $\prod_{p-1>j\ge 0}M(j)\bmod p$ we immediately obtain algorithms to compute $A_p$ with the complexities given in Theorem \ref{thm:modp} for $R=\F_p$, and for $R=\Z$ we obtain an average polynomial-time algorithm with the complexities given in Theorem \ref{thm:avgpoly} using a remainder forest.  The difference in the size of the matrices (66 versus 28 or 16) only impacts the constant factors, which we consider in the next section.

\begin{remark}\label{rem:haropt}
There is an additional optimization that we exploit in our implementation of the average polynomial-time algorithm based on \cite[Thm.\,4.1]{Har15}.  In the remainder forest algorithm, rather than computing the $66\times 66$ matrix $P_k=M_0\cdots M_k \bmod m_k$ we instead compute the $3\times 66$ matrix $P_k=V_0M_0\cdots M_k\bmod m_k$, where $V_0$ is a $3\times 66$ matrix with entries in $\{0,1\}$ and zeros in all but one entry of each row.  This optimization is possible because we only need 3 rows of the matrix product to compute $A_p$.  This optimization is not applicable in the context of Algorithm~\ref{algorithm: avgpoly} because we need to invert the reduced matrix products in order to compute the middle column of $A_p$ via Algorithm~\ref{algorithm:cartier manin}.
\end{remark}

A demonstration version of the $\widetilde O(p)$ and average polynomial-time versions of all three approaches (compressed, uncompressed, and the algorithm based on \cite[Thm.\,4.1]{Har15}) written in the SageMath computer algebra system \cite{sage} is available at \cite{CHS22}.  The optimized C implementation whose practical performance is analyzed in the next section will be part of the next release of the open source \texttt{smalljac} software library \cite{KS08}.

\section{Performance comparisons}\label{sec:timings}

In this section we compare the practical performance of our new algorithms to each other, and to existing implementations, both for computing the Cartier--Manin matrix of a smooth plane quartic over $\F_p$ (see Table~\ref{table:Fptimings}), and for computing the Cartier--Manin matrices of the reductions of a smooth plane quartic over $\Q$ at good primes $p\le N$ for some bound $N$.  Table~\ref{table:Qtimings1} compares the new average polynomial-time algorithms to each other and Table~\ref{table:Qtimings2} compares them to average polynomial-time algorithms for other types of genus 3 curves.

We first consider $\widetilde O(p)$ and $\widetilde O(p^{1/2})$ implementations of the compressed and uncompressed versions of Algorithm~\ref{algorithm:mod p} (denoted Algorithm~\ref{algorithm:mod p}c and Algorithm~\ref{algorithm:mod p}u below) as well as $\widetilde O(p)$ and $\widetilde O(p^{1/2})$ implementations of the approach based on \cite[Thm.\,4.1]{Har15} described at the end of the previous section (denoted \cite{Har15} (optimized) below).  We compared the performance of these six algorithms to each other, and to the following existing algorithms:

\begin{itemize}
\setlength{\itemsep}{8pt}
\item In \cite{Cos15} Costa gives an $\widetilde{O}(p)$-time $p$-adic algorithm for computing the matrix of Frobenius to a specified $p$-adic precision, which can be used to compute the Cartier--Manin matrix of a smooth plane quartic.  This algorithm is available at \cite{Cos15a}.
\item The \texttt{smalljac} software library \cite{KS08} includes a na\"ive point-counting algorithm for plane projective curves $X\colon f(x_0,x_1,x_2)=0$ that computes
\begin{equation}\label{eq:naive}
\qquad\ \#X(\F_p) = 0^{f(1,0,0)}\! + \#\{t\in \F_p\!:\!f(t,0,1)=0\} +\! \sum_{a\in \F_p} \#\{t\in \F_p:f(t,1,a)=0\}
\end{equation}
via the identity $\#\{t\in \F_p:g(t)=0\}=\deg\gcd(g(t),t^p-t)$ (valid for $g\ne 0$), in $O(p\log^2\!p\log\log p)$ time using $O(\log p)$ space.
\item For smooth plane curves the \texttt{RationalPoints} function in Magma \cite{Magma} uses an $O(p\log^2\!p\log\log p)$-time algorithm to enumerate rational points over $\F_p$.
\end{itemize}

The last two algorithms only compute $\#X(\F_p)$, they do not compute the Cartier--Manin matrix $A_p$, which provides additional information about $X$, including the reduction of its zeta function modulo $p$ and the $p$-rank of its Jacobian.  Magma includes an implementation of Tuitman's algorithm \cite{Tui16} that computes the entire zeta function in $\widetilde O(p)$ time, but the constant factors make it more than 100 times slower than the three $\widetilde O(p)$ algorithms listed above in the ranges we tested, so we chose not to include it in our comparison.

We ran each of these 9 algorithm on smooth plane quartics defined by dense polynomials $f\in \F_p[x_0,x_1,x_2]_4$, taking $p$ to be the first prime larger than $2^n$ for $n=10,11,\ldots,30$.  The running times for each algorithm can be found in  Table~\ref{table:Fptimings}, in which the complexity bounds in the column headings ignore $O(\log \log p)$ factors.

Each of the three $\widetilde O(p^{1/2})$ algorithms is substantially faster than the existing approaches, as one would expect given the asymptotic advantage.  For $p\approx 2^{30}$ Algorithm~\ref{algorithm:mod p}c appears to be faster than Algorithm~\ref{algorithm:mod p}u by factor of about $3$, which in turn appears to be faster than \cite{Har15} (optimized) by a factor of almost~$8$.  The factor of $3\approx (28/16)^2$ is as expected, while the factor of $8 > 5.6\approx (66/28)^2$ is larger than one might expect; this is likely due to the fact that $p$ is not large enough for the $O(r^\omega p^{1/2}\log p \log\log p)$  term in the complexity bound from \cite{BGS07} to become completely negligible.  All three implementations use the \texttt{smalljac} library \cite{KS08}, which includes an implementation of the algorithm in \cite{BGS07} built on the \texttt{zn\_poly} library \cite{Har10}, which is used for fast cache-friendly multiplication in $\F_p[x]$.

The relative performance of the $\widetilde O(p)$ implementations of Algorithm~\ref{algorithm:mod p} is perhaps more surprising: Algorithm~\ref{algorithm:mod p}u outperforms Algorithm~\ref{algorithm:mod p}c by a wide margin.  This is explained by the fact that in our $\widetilde O(p)$ implementation of Algorithm~\ref{algorithm:mod p}u we exploit the shape of the $28\times 28$ matrices $M(t)$ defined in Remark~\ref{rem:uncompressedmodp}: as can be seen from \eqref{eq:phivi}, it has only  $7\cdot 22 + 21 = 165 < 256=16^2$ nonzero entries.  As noted in Remark~\ref{rem:lineartime}, in our $\widetilde O(p)$ implementation we iteratively compute matrix-vector products, which lets us exploit the sparsity of the uncompressed $M(t)$ (the compressed matrices are not sparse).  Additionally, the uncompressed $M(t)$ have degree $1$ rather than $2$, which provides a further speedup.
  
\begin{table}
\vspace{-20pt}
\small
\setlength{\tabcolsep}{3pt}
\begin{tabular}{ l r r r r r r r r r r }
&\multicolumn{7}{c}{Cartier--Manin matrix} & \multicolumn{2}{c}{\!\!\!point counting}\\
\cmidrule(r){2-8}
\cmidrule(r){9-10}
&\multicolumn{2}{c}{Algorithm~\ref{algorithm:mod p}c}&\multicolumn{2}{c}{Algorithm~\ref{algorithm:mod p}u}& \multicolumn{2}{c}{\cite{Har15}\! (optimized)} & \multicolumn{1}{c}{\cite{Cos15}}& \multicolumn{1}{c}{\footnotesize\texttt{smalljac}\ } &\multicolumn{1}{c}{\texttt{magma}}\\
\cmidrule(l{3pt}r{3pt}){2-3}
\cmidrule(l{3pt}r{3pt}){4-5}
\cmidrule(l{3pt}r{3pt}){6-7}
\cmidrule(l{3pt}r{3pt}){8-8}
\cmidrule(l{1pt}r{1pt}){9-9}
\cmidrule(l{3pt}r{3pt}){10-10}
$\quad\ p$ & $p^{\nicefrac{1}{2}}\!\log^2\! p$ & $p\log p$ & $p^{\nicefrac{1}{2}}\!\log^2\! p$ & $p\log p$ & $p^{\nicefrac{1}{2}}\!\log^2\! p$ & $p\log p$ & $p\log p$ & $p\log^2\! p$ & $p\log^2\! p$\\
\toprule
$2^{10}+7$  & 0.003 & 0.001 & 0.002 & 0.000 & 0.022 & 0.001 & 0.014 & 0.000 & 0.000 \\
$2^{11}+5$  & 0.003 & 0.001 & 0.003 & 0.000 & 0.029 & 0.003 & 0.017 & 0.001 & 0.010 \\
$2^{12}+3$  & 0.004 & 0.002 & 0.004 & 0.000 & 0.041 & 0.006 & 0.023 & 0.001 & 0.020 \\
$2^{13}+17$ & 0.004 & 0.004 & 0.006 & 0.001 & 0.056 & 0.011 & 0.035 & 0.002 & 0.040 \\
$2^{14}+27$ & 0.005 & 0.009 & 0.008 & 0.002 & 0.081 & 0.023 & 0.058 & 0.004 & 0.070 \\
$2^{15}+3$  & 0.006 & 0.017 & 0.012 & 0.003 & 0.113 & 0.047 & 0.112 & 0.008 & 0.140 \\
$2^{16}+1$  & 0.008 & 0.033 & 0.018 & 0.006 & 0.175 & 0.089 & 0.192 & 0.023 & 0.300 \\
$2^{17}+29$ & 0.011 & 0.066 & 0.028 & 0.012 & 0.255 & 0.184 & 0.372 & 0.039 & 0.620\\
$2^{18}+3$  & 0.017 & 0.130 & 0.047 & 0.024 & 0.402 & 0.368 & 0.718 & 0.078 & 1.23\phantom{0}\\
$2^{19}+21$ & 0.025 & 0.263 & 0.072 & 0.047 & 0.598 & 0.735 & 1.43\phantom{0} & 0.158 & 2.62\phantom{0}\\
$2^{20}+7$  & 0.039 & 0.527 & 0.119 & 0.092 & 0.956 & 1.41\phantom{0} & 2.84\phantom{0} & 0.324 & 5.50\phantom{0} \\
$2^{21}+17$ & 0.060 & 1.05\phantom{0} & 0.186 & 0.188 & 1.47\phantom{0} & 2.84\phantom{0} &  5.65\phantom{0} &  0.740 & 11.4\phantom{00}\\
$2^{22}+15$ & 0.100 & 2.11\phantom{0} & 0.318 & 0.370 & 2.41\phantom{0} & 5.65\phantom{0} &  11.3\phantom{00} &  1.47\phantom{0} & 23.9\phantom{00}\\
$2^{23}+9$  & 0.154 & 4.15\phantom{0} & 0.488 & 0.736 & 3.69\phantom{0} & 11.8\phantom{00} &  22.6\phantom{00} &  2.93\phantom{0} & 48.3\phantom{00}\\
$2^{24}+43$ & 0.269 & 8.43\phantom{0} & 0.858 & 1.46\phantom{0} & 6.26\phantom{0} & 23.4\phantom{00} &  44.9\phantom{00} &  6.44\phantom{0} & 99.3\phantom{00}\\
$2^{25}+35$ & 0.421 & 16.6\phantom{00} & 1.35\phantom{0} & 2.93\phantom{0} & 9.73\phantom{0} & 45.2\phantom{00} & 89.9\phantom{00} & 13.6\phantom{00} & 201\phantom{.000}\\
$2^{26}+15$ & 0.735 & 33.7\phantom{00} & 2.36\phantom{0} & 5.83\phantom{0} & 16.8\phantom{00} & 90.4\phantom{00} & 180\phantom{.000} & 26.9\phantom{00} & 723\phantom{.000}\\
$2^{27}+29$ & 1.16\phantom{0} & 66.4\phantom{00} & 3.68\phantom{0} & 11.7\phantom{00} & 27.4\phantom{00} & 188\phantom{.000} & 360\phantom{.000} & 54.5\phantom{00} & 1530\phantom{.000}\\
$2^{28}+3$  & 1.95\phantom{0} & 135\phantom{.000} & 6.14\phantom{0} & 23.4\phantom{00} & 44.5\phantom{00} & 361\phantom{.000} & 719\phantom{.000} & 114\phantom{.000} & 3080\phantom{.000}\\
$2^{29}+11$ & 2.90\phantom{0} & 265\phantom{.000} & 9.04\phantom{0} & 46.7\phantom{00} & 68.5\phantom{00} & 750\phantom{.000}& 1440\phantom{.000} & 230\phantom{.000} & 6430\phantom{.000}\\
$2^{30}+3$  & 4.89\phantom{0} & 539\phantom{.000} & 15.1\phantom{00} & 93.1\phantom{00} & 119\phantom{.000} & \!\!1480\phantom{.000}& 3130\phantom{.000} & 465\phantom{.000} & 13600\phantom{.000}\\
\bottomrule
\end{tabular}
\smallskip
\caption{\small Algorithms for smooth plane quartics over $\F_p$. Times in 5.2GHz Intel i9-12900K core-seconds. Complexities ignore $O(\log\log p)$ factors.  The point counting computations only determine the trace of the Cartier--Manin matrix.}\label{table:Fptimings}
\medskip

\begin{tabular}{lrrrrrrrrrrr}
&\multicolumn{3}{c}{Algorithm~\ref{algorithm: avgpoly}c}&$\quad$&\multicolumn{3}{c}{Algorithm~\ref{algorithm: avgpoly}u} & $\medspace$& \multicolumn{3}{c}{\cite{Har15} (optimized)}\\
\cmidrule(lr){2-4}
\cmidrule(lr){6-8}
\cmidrule(lr){10-12}
$\ N\quad$ & seconds &\hspace{9pt} ms/$p$ & GB && seconds & ms/$p$ & $\quad$GB && seconds & ms/$p$ & GB\\\midrule
$2^{10}$ & 0.060 & 0.355 & 0.042 && 0.151 & 0.903 & 0.033 && 0.092 & 0.550 & 0.034\\
$2^{11}$ & 0.135 & 0.444 & 0.043 && 0.395 & 1.30\phantom{0} & 0.035 && 0.219 & 0.719 & 0.034\\
$2^{12}$ & 0.280 & 0.500 & 0.044 && 1.12\phantom{0} & 2.01\phantom{0} & 0.035 && 0.592 & 1.06\phantom{0} & 0.034\\
$2^{13}$ & 0.648 & 0.633 & 0.047 && 3.60\phantom{0} & 3.51\phantom{0} & 0.036 && 1.84\phantom{0} & 1.80\phantom{0} & 0.035\\
$2^{14}$ & 1.47\phantom{0} & 0.774 & 0.053 && 7.00\phantom{0} & 3.69\phantom{0} & 0.077 && 6.66\phantom{0} & 3.34\phantom{0} & 0.035\\
$2^{15}$ & 3.62\phantom{0} & 1.03\phantom{0} & 0.067 && 15.9\phantom{00} & 4.54\phantom{0} & 0.123 && 24.2\phantom{00} & 6.89\phantom{0} & 0.037\\
$2^{16}$ & 8.08\phantom{0} & 1.24\phantom{0} & 0.088 && 36.9\phantom{00} & 5.65\phantom{0} & 0.217 && 74.4\phantom{00} & 11.4\phantom{00} & 0.040\\
$2^{17}$ & 19.2\phantom{00} & 1.57\phantom{0} & 0.131 && 85.2\phantom{00} & 6.96\phantom{0} & 0.410 && 252\phantom{.000} & 20.5\phantom{00} & 0.071\\
$2^{18}$ & 44.8\phantom{00} & 1.95\phantom{0} & 0.223 && 192\phantom{.000} & 8.37\phantom{0} & 0.805 && 676\phantom{.000}& 29.4\phantom{00} & 0.910\\
$2^{19}$ & 106\phantom{.000} & 2.44\phantom{0} & 0.413 && 437\phantom{.000} & 10.1\phantom{00} & 1.63\phantom{0} && 1680\phantom{.000}& 38.6\phantom{00} & 2.38\phantom{0}\\
$2^{20}$ & 241\phantom{.000} & 2.94\phantom{0} & 0.790 && 991\phantom{.000} & 12.1\phantom{00} & 3.29\phantom{0} && 4100\phantom{.000}& 50.0\phantom{00} & 4.91\phantom{0}\\
$2^{21}$ & 543\phantom{.000} & 3.49\phantom{0} & 1.57\phantom{0} && 2230\phantom{.000} & 14.3\phantom{00} & 6.73\phantom{0} && 10800\phantom{.000}& 69.3\phantom{00} & 10.1\phantom{00}\\
$2^{22}$ & 1260\phantom{.000} & 4.26\phantom{0} & 3.20\phantom{0} && 5040\phantom{.000} & 17.0\phantom{00} & 13.8\phantom{00} && 29900\phantom{.000}& 101\phantom{.000} & 20.9\phantom{00}\\
$2^{23}$ & 2950\phantom{.000} & 5.23\phantom{0} & 6.57\phantom{0} && 11400\phantom{.000} & 20.3\phantom{00} & 28.4\phantom{00} && 88200\phantom{.000}& 156\phantom{.000} & 43.2\phantom{00}\\
\bottomrule
\end{tabular}
\smallskip

\caption{\small Average polynomial-time algorithms for  smooth plane quartics over $\Q$ with small coefficients. Times in 5.2GHz Intel i9-12900K core-seconds.}\label{table:Qtimings1}
\end{table}

We also analyzed the performance of the three average polynomial-time algorithms introduced in this paper: the compressed and uncompressed versions of Algorithm~\ref{algorithm: avgpoly} and the algorithm based on \cite[Thm.\,4.1]{Har15}.  Table~\ref{table:Qtimings1} lists the total time and space, and average time per prime, to compute the Cartier--Manin matrices of the reductions modulo~$p$ of a fixed smooth plane quartic curve over $\Q$ for good primes $p\le N=2^n$ for $n=10,11,\ldots, 23$.  We used a dense polynomial $f\in\Z[x_0,x_1,x_2]_4$ with small (single digit) coefficients as input to all three algorithms.
The parameter $\kappa$ that determines the number $2^\kappa$ of trees in the remainder forest was chosen to optimize the running time; for $N=2^{18},\ldots, 2^{23}$ this led us to use $\kappa=6$ for both versions of Algorithm~\ref{algorithm: avgpoly} and $\kappa=7$ for the algorithm based on \cite[Thm.\,4.1]{Har15}, which is close to the asymptotic value $\kappa = \lfloor 2\log_2\log_2\! N\rfloor$ used in Theorem~\ref{thm:forest}.

\begin{remark}
For the algorithm based on \cite[Thm.\,4.1]{Har15}, at small values of $N$ the optimal value of $\kappa$ is actually $\log_2 N$, meaning that each ``tree'' in the forest consists of a single matrix.  This choice of $\kappa$ leads to an $\widetilde O(N^2)$ time complexity but is advantageous for small values of~$N$ because it allows the algorithm to avoid full matrix multiplications via Remark~\ref{rem:haropt}.  This explains the rapid growth in the running times for this algorithm for $N\le 2^{17}$.
\end{remark}

In addition to $\kappa$, the memory used by our algorithms is influenced by the matrix dimensions and the size of the matrix coefficients.
To get a better understanding of these parameters, we analyzed the computation of a single product tree in the middle of a remainder forest with $N=2^{24}$ and $\kappa=6$ for all three algorithms.  The results are shown in Table~\ref{table:sizes}, in which one can see the growth in the size of the matrix coefficients at each level of the product tree in the ``KB/entry'' columns, the total size of all the matrices in each level in the ``MB'' columns, and the total time per level.  The decrease in the total size of the matrices in the first few layers of the product tree for Algorithm~\ref{algorithm: avgpoly}c is explained by Remark~\ref{rem:divfactor}.

\begin{remark}
In our implementation we use the algorithm for integer matrix multiplication described in \cite{HvdH18}.  As explained in the proof of Theorem~\ref{thm:avgpoly}, this algorithm computes the product of $r\times r$ matrices with $b$-bit entries in time $O(r^2 b\log b + r^\omega b\log\log b)$, provided that $\log r=O(\log b)$.  This becomes $O(r^2b\log b)$ when $b$ is large relative to $r$, as in the context of Theorem~\ref{thm:forest} where we have $r=O(\log B)$, and in Theorem~\ref{thm:avgpoly} where $r=O(1)$.  But for the small values of~$b$ that arise in the lower levels of the product tree the constant factors make this approach less efficient than na\"ive matrix multiplication, so we use the algorithm of \cite{HvdH18} only once it becomes faster to do so.  These crossover points are indicated by thin horizontal lines in Table~\ref{table:sizes}.
Given that $r$ is fixed in all the algorithms we consider, we made no attempt to achieve the optimal value of $\omega$ in our implementation; doing so might have improved the relative performance of the algorithm with $r=66$ in the range we tested.
\end{remark}

\begin{table}
\small
\begin{tabular}{crcccrcccrcc}
&\multicolumn{3}{p{4cm}}{\centering Algorithm~\ref{algorithm: avgpoly}c\\$\ (r=16)$}&&\multicolumn{3}{p{4cm}}{\centering Algorithm~\ref{algorithm: avgpoly}u\\$\ (r=28)$} && \multicolumn{3}{p{4cm}}{\centering\cite{Har15} (optimized)\\$\ (r=66)$}\\
\cmidrule(lr){2-4}
\cmidrule(lr){6-8}
\cmidrule(lr){10-12}
products & KB/entry & MB & seconds && KB/entry & MB & seconds && KB/entry & MB & seconds\\\toprule
$2^{17}$ & 0.014 & 457 & 2.91 && 0.005 & 469 & 6.62 && 0.003 & 1890 & 87.2\\
$2^{16}$ & 0.029 & 470 & 2.95 && 0.015 & 776 & 6.21 && 0.009 & 2508 & 70.7\\
$2^{15}$ & 0.055 & 449 & 2.28 && 0.039 & 989 & 7.37 && 0.019 & 2624 & 53.5\\
$2^{14}$ & 0.103 & 420 & 2.44 && 0.079 & 996 & 7.07 && 0.038 & 2679 & 36.3\\
$2^{13}$ & 0.198 & 406 & 2.62 && 0.159 & 999 & 8.68 && 0.078 & 2708 & 31.8\\[-3pt]\cmidrule(l{24pt}r{8pt}){2-4}\\[-16pt]
$2^{12}$ & 0.389 & 399 & 3.58 && 0.319 & 1001 & 13.2\phantom{0} && 0.156 & 2723 & 46.0\\[-3pt]\cmidrule(l{24pt}r{8pt}){6-8}\\[-16pt]
$2^{11}$ & 0.772 & 395 & 3.71 && 0.639 & 1002 & 14.4\phantom{0} && 0.313 & 2730 & 73.6\\[-3pt]\cmidrule(l{24pt}r{8pt}){10-12}\\[-16pt]
$2^{10}$ & 1.54\phantom{9} & 393 & 3.44 && 1.28\phantom{0} & 1003 & 13.6\phantom{0} && 0.628 & 2734 & 79.6\\
$2^{9}$ & 3.07\phantom{1} & 392 & 3.39 && 2.56\phantom{0} & 1003 & 14.0\phantom{0} && 1.26\phantom{0} & 2736 & 77.6\\
$2^{8}$ & 6.13\phantom{0} & 392 & 3.43 && 5.12\phantom{0} & 1003 & 14.1\phantom{0} && 2.51\phantom{0} & 2737 & 76.8\\
$2^{7}$ & 12.2\phantom{00} & 392 & 3.51 && 10.2\phantom{00} & 1003 & 14.4\phantom{0} && 5.03\phantom{0} & 2737 & 76.5\\
$2^{6}$ & 24.5\phantom{00} & 392 & 3.81 && 20.5\phantom{00} & 1003 & 15.0\phantom{0} && 10.1\phantom{00} & 2737 & 77.9\\
$2^{5}$ & 49.0\phantom{00} & 392 & 3.90 && 40.9\phantom{00} & 1003 & 15.2\phantom{0} && 20.1\phantom{00} & 2738 & 80.0\\
$2^{4}$ & 97.9\phantom{00} & 392 & 4.05 && 81.9\phantom{00} & 1003 & 15.5\phantom{0} && 40.2\phantom{00} & 2738 & 80.8\\
$2^{3}$ & 196\phantom{.000} & 392 & 4.18 && 164\phantom{.000} & 1003 & 16.0\phantom{0} && 80.4\phantom{00} & 2738 & 82.0\\
$2^{2}$ & 392\phantom{.000} & 392 & 4.37 && 328\phantom{.000} & 1003 & 16.5\phantom{0} && 161\phantom{.000} & 2738 & 84.1\\
$2$ & 783\phantom{.000} & 392 & 4.52 && 655\phantom{.000} & 1003 & 17.1\phantom{0} && 322\phantom{.000} & 2738 & 85.7\\
$1$ & 1570\phantom{.000} & 392 & 5.80 && 1310\phantom{.000} & 1003 & 21.0\phantom{0} && 644\phantom{.000} & 2738 & 96.4\\
\bottomrule
\end{tabular}
\smallskip

\caption{Computation of a product tree in the middle of a remainder forest with $N=2^{24}$ and $\kappa=6$ involving the product of $N/2^{\kappa}=2^{18}$ $r\times r$ matrices.  The ``MB'' columns list the total size of the products in megabytes.  Horizontal lines indicate matrix multiplication algorithm crossovers. Times in 5.2GHz Intel i9-12900K core-seconds.}\label{table:sizes}
\end{table}

In Table~\ref{table:sizes} one can see that the matrix coefficient sizes roughly double in each level while the number of matrix products is cut in half, and the total size of the products in each level is essentially constant in the top half of each tree.
Asymptotically, the time to build each layer of the product tree is quasilinear in the total size, so for sufficiently large $N/2^\kappa$ one would expect the relative running times of the three algorithms in the top half of the tree to approach the ratios of these total sizes, which are roughly $1:2.6:7.0$ for the algorithms with $r=16$, $28$, $66$, respectively. The ratios of the actual times to build these trees for $N=2^{24}$ are approximately $1:3.6:20.0$, a discrepancy that is likely explained by lower order complexity terms involving $r^\omega$ and the greater frequency of cache misses for larger total bit sizes.

\begin{remark}\label{rem:buildonly}
Table~\ref{table:sizes} only captures the cost of building a product tree in the remainder forest, which is less than half the total running time (for the time-optimal value of $\kappa$).  The other phases of the algorithm (transferring information between product trees and computing remainders down the trees) involve computations on matrices that one can assume have been reduced modulo $m$, where $m$ is either the product of all remaining moduli, or the product of the moduli in some subtree.  The values of $m$ will be the same in all three algorithms, so one would asymptotically expect the relative costs of these phases to converge to the relative ratios of $2r^2$ for $r=16,28$ and $3r+r^2$ for $r=66$ (via Remark~\ref{rem:haropt}), which are $1:3.1:8.9$.
\end{remark}

\begin{remark}
As in Table~\ref{table:Qtimings1}, the data in Table~\ref{table:sizes} reflects a curve with small coefficients, which is the case we expect to most often arise in practice (as in \cite{Sut19}, for example).
To assess the performance of our algorithms on curves with larger coefficients we also tested random curves with 10 and 100 digit coefficients with $N=2^{24}$ using $\kappa=8$ and $\kappa=10$.  As in Table~\ref{table:sizes}, the total size of the matrix products at each level stabilizes in the top half of the product tree, as do the relative running times.  For 10-digit coefficients the relative size ratios are  $1:2.8:2.7$ and the time ratios are $1:3.5:6.0$ (for the algorithms with $r=16,28,66$, respectively), and for 100-digit coefficients the relative size ratios are $1:2.7:1.8$ and the time ratios are $1:2.4:2.7$ (as noted above, these ratios are relevant only to the build phase).
\end{remark}

Finally, we compared the performance of Algorithm~\ref{algorithm: avgpoly}c to average polynomial-time algorithms that are applicable to various types of genus 3 curves over $\Q$, including:

\begin{itemize}
\setlength{\itemsep}{6pt}
\item The algorithm in \cite{HMS16} for computing Cartier--Manin matrices of reductions of a geometrically hyperelliptic curve of genus 3 defined over $\Q$ with a model of the form $g(x,y,z)=0, w^2=f(x,y,z)$, where $g$ is a pointless conic and $\deg f=4$.
\item The algorithm in \cite{HS16} for computing Cartier--Manin matrices of reductions of a hyperelliptic curve over $\Q$, applied to a genus 3 curve $y^2=f(x)$ with $\deg f=8$, which is a 2-cover of $\P^1$.
\item The algorithm in \cite{Sut20} for computing the Cartier--Manin matrices of reductions of superelliptic curves $y^m=f(x)$ over $\Q$ applied to genus 3 curves of the form $y^3=f(x)$ and $y^4=f(x)$ with $\deg f = 4$ (the case $y^3=f(x)$ is a Picard curve).
\item The algorithm for smooth plane quartics of the form $x^4+h(y,z)x^2=f(y,z)$ (these are degree 2 covers of genus 1 curves) described in Remark~\ref{rem:ellcover}.
\end{itemize}

The results appear in Table~\ref{table:Qtimings2}, which reflects curves defined by dense polynomials with random single digit coefficients.  All of these implementations use the \textsc{RemainderForest} algorithm and the same libraries for multiplying polynomials and matrices over $\F_p$ and $\Z$, based on \cite{Har10} and \cite{HvdH18}. None of these computations required more than 64GB memory, but the computations for smooth plane quartics were the most memory intensive.

\begin{table}[h!]
\begin{tabular}{lrrrrrr}
\multicolumn{1}{p{1cm}}{\centering \quad\\\!\!\!\!\!\!$N$} & \multicolumn{1}{p{1.6cm}}{\centering plane\\ quartic}  & \multicolumn{1}{p{2.3cm}}{\centering geometrically \\ hyperelliptic} & \multicolumn{1}{p{2.3cm}}{\centering rationally \\ hyperelliptic} & \multicolumn{1}{p{2.4cm}}{\centering 2-cover of a\\ genus 1 curve} & \multicolumn{1}{p{1.5cm}}{\centering 3-cover\\of $\P^1$} & \multicolumn{1}{p{1.5cm}}{\centering $4$-cover \\ of $\P^1$}\\
\toprule
$2^{10}$ & 0.058 & 0.053 & 0.007 & 0.021 & 0.006 & 0.006 \\
$2^{11}$ & 0.158 & 0.069 & 0.008 & 0.035 & 0.007 & 0.007 \\
$2^{12}$ & 0.281 & 0.126 & 0.011 & 0.070 & 0.008 & 0.008 \\
$2^{13}$ & 0.638 & 0.294 & 0.022 & 0.139 & 0.013 & 0.012 \\
$2^{14}$ & 1.49\phantom{0} & 0.724 & 0.065 & 0.326 & 0.030 & 0.028\\
$2^{15}$ & 3.43\phantom{0} & 2.12\phantom{0} & 0.222 & 0.742 & 0.086 & 0.089\\
$2^{16}$ & 8.00\phantom{0} & 5.42\phantom{0} & 0.829 & 1.77\phantom{0} & 0.333 & 0.285\\
$2^{17}$ & 19.1\phantom{00} & 12.4\phantom{00} & 3.25\phantom{0} & 4.24\phantom{0} & 0.882 & 0.760\\
$2^{18}$ & 44.6\phantom{00} & 29.6\phantom{00} & 10.0\phantom{00} & 10.1\phantom{00} & 2.38\phantom{0} & 2.15\phantom{0}\\
$2^{19}$ &  105\phantom{.000} & 69.5\phantom{00} & 24.4\phantom{00} & 24.2\phantom{00} & 6.67\phantom{0} & 5.48\phantom{0}\\
$2^{20}$ &  241\phantom{.000} & 168\phantom{.000} & 55.6\phantom{00} & 57.2\phantom{00} & 15.3\phantom{00} & 12.2\phantom{00}\\
$2^{21}$ &  543\phantom{.000} & 388\phantom{.000} & 133\phantom{.000} & 133\phantom{.000} & 36.1\phantom{00} & 29.6\phantom{00}\\
$2^{22}$ & 1260\phantom{.000} & 921\phantom{.000} & 320\phantom{.000} & 315\phantom{.000} & 87.6\phantom{00} & 72.0\phantom{00}\\
$2^{23}$ & 2950\phantom{.000} & 2160\phantom{.000} & 746\phantom{.000} & 748\phantom{.000} & 214\phantom{.000} & 173\phantom{.000}\\
$2^{24}$ & 6840\phantom{.000} & 4860\phantom{.000} & 1760\phantom{.000} & 1750\phantom{.000} & 514\phantom{.000} & 410\phantom{.000}\\
$2^{25}$ & 15600\phantom{.000} & 11200\phantom{.000} & 4120\phantom{.000} & 4050\phantom{.000} & 1220\phantom{.000} & 975\phantom{.000}\\
$2^{26}$ & 35600\phantom{.000} & 26000\phantom{.000} & 9560\phantom{.000} & 9370\phantom{.000} & 2880\phantom{.000} & 2350\phantom{.000}\\
\bottomrule
\end{tabular}
\smallskip

\caption{Average polynomial-time algorithms for various genus~3 curves over~$\Q$ with small coefficients.  Times in 5.2GHz Intel i9-12900K core-seconds.}\label{table:Qtimings2}\end{table}

\end{document}